\renewcommand\subsection{\leftskip 0pt\@startsection{subsection}{2}{\z@}%
                                     {-3.25ex\@plus -1ex \@minus -.2ex}%
                                     {1.5ex \@plus .2ex}%
                                     {\normalfont\normalsize\bfseries}}
\renewcommand\subsubsection{\@startsection{subsubsection}{3}{\z@}%
                                     {-3.25ex\@plus -1ex \@minus -.2ex}%
                                     {1.5ex \@plus .2ex}%
                                     {\normalfont\normalsize\bfseries\leftskip 3ex}}
\DeclareMathOperator{\Cell}{Cell}
\DeclareMathOperator{\DK}{DK}
\DeclareMathOperator{\Be}{Be}
\DeclareMathOperator{\SH}{\categ{SH}}
\DeclareMathOperator{\Sper}{Sper}
\DeclareMathOperator{\Sing}{Sing}
\DeclareMathOperator{\cell}{cell}
\renewcommand{\Sing}{\operatorname{Sing}}
\DeclareMathOperator{\CB}{C}
\DeclareMathOperator{\Tot}{Tot}
\newcommand{\mc}[1]{\mathcal{#1}}
\newcommand{\mcb}[1]{{\mathcal{#1}}}
\newcommand{\mb}[1]{\mathbb{#1}}
\newcommand{\mr}[1]{\mathrm{#1}}
\newcommand{\mbf}[1]{\mathbf{#1}}
\newcommand{\mit}[1]{\mathit{#1}}
\newcommand{\abs}[1]{\lvert #1 \rvert}
\newcommand{\bra}[1]{\langle #1 \rangle}
\newcommand{\ul}[1]{\underline{#1}}
\newcommand{\td}[1]{\widetilde{#1}}
\newcommand{\RR}{\mathbb{R}}
\newcommand{\ZZ}{\mathbb{Z}}
\newcommand{\CC}{\mathbb{C}}
\newcommand{\FF}{\mathbb{F}}
\newcommand{\GG}{\mathbb{G}}
\newcommand{\AF}{\mathbb{A}}
\newcommand{\PP}{\mathbb{P}}
\theoremstyle{definition}
 \newtheorem{thm}[equation]{Theorem}
 \newtheorem{cor}[equation]{Corollary}
 \newtheorem{lem}[equation]{Lemma}
 \newtheorem{prop}[equation]{Proposition}
 \newtheorem{rmk}[equation]{Remark}
\newtheorem*{thm*}{Theorem}
\newtheorem*{cor*}{Corollary}
\newtheorem*{lem*}{Lemma}
\newtheorem*{prop*}{Proposition}
\newtheorem*{defn*}{Definition}
\newtheorem*{ex*}{Example}
\newtheorem*{exs*}{Examples}
\newtheorem*{rmk*}{Remark}
\newtheorem*{claim*}{Claim}
\numberwithin{equation}{section}
\numberwithin{figure}{section}
\DeclareMathOperator{\Ext}{Ext}
\DeclareMathOperator{\Hom}{Hom}
\DeclareMathOperator{\Map}{Map}
\DeclareMathOperator*{\holim}{holim}
\DeclareMathOperator*{\colim}{colim}
\DeclareMathOperator*{\Spec}{Spec}
\theoremstyle{definition}
\newtheorem{dfn}[equation]{Definition}
\newtheorem{exm}[equation]{Example}
\newtheorem{wrn}[equation]{Warning}
\newtheorem*{dfn*}{Definition}
\newtheorem*{axm*}{Axiom}
\newtheorem*{ntn*}{Notation}
\newtheorem*{exm*}{Example}
\newtheorem*{exr*}{Exercise}
\newtheorem*{int*}{Intuition}
\newtheorem*{qst*}{Question}
\newtheorem*{wrn*}{Warning}
\theoremstyle{plain}
\newtheorem*{cnj*}{Conjecture}
\DeclareMathOperator{\cofib}{cofib}
\DeclareMathOperator{\Fun}{Fun}
\DeclareMathOperator{\Ind}{Ind}
\DeclareMathOperator{\Res}{Res}
\newcommand{\cC}{\mathcal{C}}
\newcommand{\cD}{\mathcal{D}}
\newcommand{\cU}{\mathcal{U}}
\newcommand{\cX}{\mathcal{X}}
\newcommand{\cZ}{\mathcal{Z}}
\newcommand{\categ}[1]{{\textup{#1}}}
\newcommand{\CAlg}{\categ{CAlg}}
\newcommand{\Mod}{\categ{Mod}}
\newcommand{\Shv}{\categ{Shv}}
\newcommand{\Sp}{\categ{Sp}}
\renewcommand{\Pr}{\categ{Pr}}
\newcommand{\Sect}{\categ{Sect}}
\newcommand{\cocart}{\textit{cocart}}
\newcommand{\id}{\textit{id}}
\newcommand{\op}{\textit{op}}
\newcommand{\coloneq}{\mathrel{\mathop:}=}
\def\reflectedop#1#2{\mathop{\reflectbox{$#1{#2}$}}}
\def\noloc{{\mathpalette\reflectedop\colon}}
\def\reflectedop#1#2{\mathop{\reflectbox{$#1{#2}$}}}
\def\reflectedop#1#2{\mathop{\reflectbox{$#1{#2}$}}}
\newcommand*{\asterism}{\par\bigskip\noindent\hfill{\ooalign{\hfil\raise1ex\hbox{$\ast$}\hfil\cr$\ast$\kern-0.07ex$\ast$}}\hfill\null\par\bigskip}
\newcounter{sarrow}
\newcommand{\adjunct}[4]{{#1}\colon{#2}\ \begin{tikzpicture}[baseline] \draw[>=stealth,->] (0,1ex) -- (0.75,1ex); \draw[>=stealth,->] (0.75,0.25ex) -- (0,0.25ex); \end{tikzpicture}\ {#3}\noloc{#4}}
\renewcommand{\to}{\ \tikz[baseline]\draw[>=stealth,->](0,0.5ex)--(0.4,0.5ex);\ }
\newcommand{\righthookto}{\ \tikz[baseline]\draw[>=stealth,right hook->](0,0.5ex)--(0.4,0.5ex);\ }
\newcommand{\xto}[1]{\ \tikz[baseline]\draw[>=stealth,->,inner sep=2pt](0,0.5ex)--node[above]{\scriptsize $#1$}(0.5,0.5ex);\ }
\title{$C_2$-equivariant stable homotopy from real motivic stable homotopy}
\author{Mark Behrens}
\address{
Dept. of Mathematics \\
University of Notre Dame \\
Notre Dame, IN, U.S.A.
}
\author{Jay Shah}
\address{
Dept. of Mathematics \\
University of Notre Dame \\
Notre Dame, IN, U.S.A.
}
\begin{document}

\tikzcdset{arrow style=tikz, diagrams={>=stealth}}

\begin{abstract}
We give a method for computing the $C_2$-equivariant homotopy groups of the Betti realization of a $p$-complete cellular motivic spectrum over $\RR$ in terms of its motivic homotopy groups. More generally, we show that Betti realization presents the $C_2$-equivariant $p$-complete stable homotopy category as a localization of the $p$-complete cellular real motivic stable homotopy category.
\end{abstract}

\maketitle

\tableofcontents

\section{Introduction}

Let $\SH(K)$ denote the $\infty$-category of motivic spectra over a field $K$ \cite{MorelVoevodsky}, whose equivalences are given by the stable $\AF^1$-equivalences.  This $\infty$-category has a bigraded family of spheres
$$ S^{i,j} := S^{i-j} \wedge \GG_m^j $$
of topological degree $i$ and motivic weight $j$.  These lead to bigraded homotopy groups 
$$ \pi^K_{i,j}X := [S^{i,j}, X]_K. $$
A motivic spectrum is \emph{cellular} if it is built from the spheres $S^{i,j}$ using cofiber sequences and filtered homotopy colimits.  A map between cellular spectra is an stable $\AF^1$-equivalence if and only if it is a $\pi^K_{*,*}$-isomorphism \cite{DuggerIsaksencell}.  We shall let $\SH_{\cell}(K)$ denote the full subcategory of cellular spectra.

\subsection*{Complex and real Betti realization}

If $Z$ is a smooth scheme over $\CC$, then its $\CC$-points, or Betti realization,  
$$ \Be (Z) := Z(\CC) $$
form a topological space when endowed with the complex analytic topology.  The resulting Betti realization functor
$$ \Be: \SH(\CC) \to \Sp $$
(where $\Sp$ denotes the $\infty$-category of spectra)
is called \emph{Betti realization} \cite{MorelVoevodsky}.
Since $\Be(S^{i,j}) = S^i$, Betti realization induces a map
$$ \Be: \pi^{\CC}_{i,j} X \to \pi_{i}\Be({X}). $$ 

This map was well studied by Dugger and Isaksen \cite{DuggerIsaksenMASS} (at the prime $2$) and by Stahn \cite{Stahn} (at odd primes).  For a prime $p$, the $p$-complete motivic stable stems has an element
$$ \tau \in \pi^{\CC}_{0,-1} (S^{0,0})^{\wedge}_p. $$
The following result is a direct corollary of the results of Dugger-Isaksen and Stahn (here, $\widehat{\Be}_p(-)$ denotes $p$-completed Betti realization).

\begin{thm}[see Thm.~\ref{thm:ComplexSingFullyFaithful}]\label{thm:Cbetti}
Let $X \in \SH(\CC)$ be $p$-complete and cellular.  Then Betti realization induces an isomorphism of abelian groups
$$
\pi^{\CC}_{i,j}X[\tau^{-1}] \xto{\cong} \pi_i\widehat{\Be}_p(X), $$ 
and thus an equivalence of $\infty$-categories
$$ \widehat{\Be}_p: \SH_{\mr{cell}}({\CC})^{\wedge}_p[\tau^{-1}] \xto{\simeq} \Sp^{\wedge}_p. $$
\end{thm}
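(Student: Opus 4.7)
The plan is to reduce the categorical statement to the calculation of bigraded homotopy groups, and to deduce both from the known computation on the sphere spectrum.

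First, I would observe that Betti realization carries the class $\tau \in \pi^{\CC}_{0,-1}(S^{0,0})^{\wedge}_p$ to an equivalence, because $\widehat{\Be}_p(S^{0,-1}) \simeq (S^0)^{\wedge}_p$ and $\widehat{\Be}_p(\tau)$ is the identity map of $(S^0)^{\wedge}_p$. By the universal property of Bousfield localization, this forces $\widehat{\Be}_p$ to factor through the $\tau$-inverted category, giving a symmetric monoidal, colimit-preserving functor
$$\widehat{\Be}_p\colon \SH_{\mr{cell}}(\CC)^{\wedge}_p[\tau^{-1}] \to \Sp^{\wedge}_p.$$

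Second, I would invoke the results of Dugger--Isaksen (at $p=2$) and of Stahn (at odd primes), which identify the $\tau$-inverted $p$-complete motivic Adams spectral sequence with the classical $p$-complete Adams spectral sequence for the sphere. This yields the isomorphism $\pi^{\CC}_{i,j}(S^{0,0})^{\wedge}_p[\tau^{-1}] \xrightarrow{\cong} \pi_i (S^0)^{\wedge}_p$ realizing the map on $\pi_{i,j}$ of the sphere. By smashing with the bigraded spheres $S^{i,j}$, the same statement holds for every cellular sphere.

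Third, to extend from spheres to arbitrary cellular $X$, I would use a standard cellular induction. Both $X \mapsto \pi^{\CC}_{i,j}(X)[\tau^{-1}]$ and $X \mapsto \pi_i \widehat{\Be}_p(X)$ are homological functors on $\SH_{\mr{cell}}(\CC)^{\wedge}_p$ that convert cofiber sequences into long exact sequences and preserve filtered colimits; this uses that $\tau$-localization is smashing and that Betti realization is a left adjoint. Since the natural transformation between them is an isomorphism on each $S^{i,j}$, the five lemma and passage to filtered colimits upgrade the isomorphism to all cellular $X$, which is the first assertion of the theorem.

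Finally, for the equivalence of $\infty$-categories, I would apply the standard criterion for a colimit-preserving functor between presentable stable $\infty$-categories to be an equivalence: it suffices that the functor carry a set of compact generators of the source to a set of compact generators of the target and induce isomorphisms on all mapping spectra between them. The objects $(S^{i,j})^{\wedge}_p[\tau^{-1}]$ compactly generate the source (localization preserves compact generators), they map to $(S^i)^{\wedge}_p$, which compactly generates $\Sp^{\wedge}_p$, and the induced maps on mapping spectra are precisely the isomorphisms established above (after reindexing by $j$). The main obstacle is the computational input from Dugger--Isaksen and Stahn; given that input, the categorical upgrade above is essentially formal.
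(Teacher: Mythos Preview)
Your approach differs from the paper's: the paper applies the monoidal Barr--Beck theorem of Mathew--Naumann--Noel to the adjunction $\widehat{\Be}_p \dashv \Cell\Sing$ (whose hypotheses it has already verified) to identify $\Sp^{\wedge}_p$ with $\Mod_{\SH_{\cell}(\CC)^{\wedge}_p}\bigl(\Cell\Sing(S^0)^{\wedge}_p\bigr)$, and then invokes Dugger--Isaksen and Stahn only to identify the algebra $\Cell\Sing(S^0)^{\wedge}_p$ with $(S^{0,0})^{\wedge}_p[\tau^{-1}]$. Your route via cellular induction and a compact-generator criterion is more direct and avoids the Barr--Beck machinery.

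However, there is a genuine technical gap in your steps 3 and 4: the spheres $S^{i,j}$ are \emph{not} compact in $\SH_{\cell}(\CC)^{\wedge}_p$, because filtered colimits in that category are $p$-completed and $\pi^{\CC}_{i,j}$ does not commute with $p$-completion; likewise $\pi_i$ does not commute with filtered colimits in $\Sp^{\wedge}_p$. So neither functor in step 3 preserves filtered colimits as claimed, and the objects you name in step 4 are not compact generators. The compact generators of $\SH_{\cell}(\CC)^{\wedge}_p$ are rather the Moore objects $S^{i,j}/p$, whose Betti realizations $S^i/p$ compactly generate $\Sp^{\wedge}_p$. Your argument is easily repaired by running the mapping-spectrum comparison of step 4 on these objects instead; the required input $\pi^{\CC}_{*,*}(S^{0,0}/p)[\tau^{-1}] \cong \pi_*(S^0/p)$ follows from the sphere case via the cofiber sequence for multiplication by $p$. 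Once the categorical equivalence is established this way, the isomorphism on bigraded homotopy for arbitrary cellular $p$-complete $X$ follows from it, making step 3 redundant.
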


In the real case, there is a real Betti realization functor
$$ \Be_\RR: \SH(\RR) \to \Sp $$
which arises from associating to a smooth scheme $Z$ over $\RR$ its topological space of $\RR$-points $Z(\RR)$, endowed with the real analytic topology.  The inclusion
$$ \rho: \{\pm 1\} \righthookto \GG_m $$
gives an element $\rho \in \pi^{\RR}_{-1,-1}S^{0,0}$, which becomes an equivalence after real Betti realization.
Bachmann proved the following in \cite{Bachmann}.

\begin{thm}[see Thm.~\ref{thm:Bachmann}]\label{thm:Rbetti}
For all $X \in \SH(\RR)$, real Betti realization induces an isomorphism of abelian groups
$$ \Be_\RR: \pi^{\RR}_{i,j} X[\rho^{-1}] \xto{\cong} \pi_{i-j} \Be_\RR(X), $$
and moreover\footnote{Bachmann's methods do not rely upon cellularity hypotheses.} an equivalence of $\infty$-categories
$$ \SH({\RR})[\rho^{-1}] \xto{\simeq} \Sp. $$
\end{thm}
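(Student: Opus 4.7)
The plan is to follow Bachmann's strategy: identify $\rho$-localization of $\SH(\RR)$ with sheafification for the real \'etale topology, and use the fact that the real \'etale site of $\Spec(\RR)$ is essentially trivial. As a first step, I check that $\Be_\RR(\rho)$ is an equivalence: the inclusion $\{\pm 1\} \righthookto \GG_m$ real-realizes to $\{\pm 1\} \righthookto \RR^\times$, which is a homotopy equivalence of topological spaces. Hence $\Be_\RR$ factors through the symmetric monoidal localization $\SH(\RR) \to \SH(\RR)[\rho^{-1}]$ as a colimit-preserving functor $\br{\Be}_\RR \colon \SH(\RR)[\rho^{-1}] \to \Sp$, and it suffices to prove that $\br{\Be}_\RR$ is an equivalence.

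The technical heart is to identify $\SH(\RR)[\rho^{-1}]$ with the real \'etale stable motivic category, obtained by sheafifying the presheaves underlying motivic spectra for the Grothendieck topology on $\Sm/\RR$ whose covers are \'etale maps surjective on real points. On the one hand, $\rho$ becomes an equivalence after real \'etale sheafification, since on real points $\GG_m(\RR) = \RR^\times$ deformation retracts onto $\{\pm 1\}$, inducing a comparison functor from $\SH(\RR)[\rho^{-1}]$ to the real \'etale localization. On the other hand, to see that every $\rho$-local motivic spectrum is already a real \'etale sheaf, I combine Morel's computation of the first motivic homotopy sheaves in terms of Milnor-Witt K-theory with Jacobson's description of real \'etale cohomology via Witt sheaves: inverting $\rho$ (which represents $[-1] \in K^{MW}_1$) quotients Milnor-Witt K-theory onto Witt K-theory, killing precisely the cohomological obstructions to real \'etale descent. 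By a connectivity argument using Morel's $t$-structure, this suffices to conclude that a $\rho$-local object satisfies real \'etale descent.

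Since $\RR$ is real closed, the real \'etale site of $\Spec(\RR)$ is equivalent to the trivial site, so real \'etale sheaves of spectra on $\Sm/\RR$ evaluated at $\Spec(\RR)$ are just spectra; tracing through identifies this global-sections functor with $\br{\Be}_\RR$. The homotopy-group statement then follows formally: under the equivalence $S^{i,j}$ corresponds to $\Be_\RR(S^{i,j}) = S^{i-j}$, whence
$$ \pi^{\RR}_{i,j}(X)[\rho^{-1}] = [S^{i,j}, X]_{\SH(\RR)[\rho^{-1}]} \xto{\cong} [S^{i-j}, \Be_\RR(X)]_{\Sp} = \pi_{i-j}\Be_\RR(X). $$
The main obstacle is the identification of $\rho$-locality with real \'etale descent: the forward direction requires a careful analysis of the covers generating the real \'etale topology, and the converse relies on Morel's deep input on connectivity and the low-degree structure of the motivic homotopy sheaves.
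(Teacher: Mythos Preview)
The paper does not supply its own proof of this statement: Theorem~1.2 is attributed to Bachmann and simply restated later as Theorem~7.3 with a citation to \cite{Bachmann}, where the equivalence $\SH(S)[\rho^{-1}] \simeq \Sp(\Shv(\Sper(S)))$ is proved in general and then specialized to $S = \Spec(\RR)$ using that $\Sper(\RR)$ is a point. Your sketch is a faithful outline of Bachmann's argument --- factoring through the real \'etale localization, using Morel's Milnor--Witt computation together with the passage to Witt $K$-theory upon inverting $\rho$, and a connectivity/$t$-structure argument to conclude --- so in that sense you are reproducing the same approach the paper relies on rather than offering an alternative.

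Two places where your sketch is looser than Bachmann's actual proof deserve flagging. First, the sentence ``real \'etale sheaves of spectra on $\Sm/\RR$ evaluated at $\Spec(\RR)$ are just spectra'' elides the nontrivial step of passing from the real \'etale motivic category (which still carries $\AF^1$-invariance and $\PP^1$-stability) to sheaves on the real spectrum $\Sper(\RR)$; Bachmann proves this comparison separately. Second, the claim that inverting $\rho$ ``kill[s] precisely the cohomological obstructions to real \'etale descent'' is the hard direction and in Bachmann's paper requires more than just the Milnor--Witt/Witt identification: it uses a descent spectral sequence argument together with the input from Jacobson that the Witt sheaf is already a real \'etale sheaf. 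Your acknowledgment that this is ``the main obstacle'' is accurate, but as written the sketch does not indicate how that obstacle is actually overcome.
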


\subsection*{Statement of results}

The results discussed above
demonstrate that the homotopy groups of the complex and real Betti realizations of a cellular motivic spectrum can be obtained by localizing its motivic homotopy groups, and each of these Betti realization functors is a localization.

\emph{The purpose of this paper is to prove a similar result about the $C_2$-Betti realization functor}
$$ \Be^{C_2}: \SH(\RR) \to \Sp^{C_2}. $$
Here, $\Sp^{C_2}$ denotes the $\infty$-category of genuine $C_2$-spectra. This functor arises from associating to a smooth scheme $Z$ over $\RR$ the $C_2$-topological space $Z(\CC)$, with the $C_2$-action given by complex conjugation.

For $Y \in \Sp^{C_2}$, the $RO(C_2)$-graded equivariant homotopy groups are bigraded by setting 
$$ \pi^{C_2}_{i,j}Y := [S^{(i-j)+j\sigma}, Y]^{C_2}, $$
where $\sigma$ is the sign representation.
In $G$-equivariant homotopy theory, one takes the stable equivalences to be the $\pi^H_*$-isomorphisms, where $\pi_*^H$ denotes the $\mb{Z}$-graded $H$-equivariant homotopy groups, and $H$ ranges over the subgroups of $G$.  
However, in the case of $G = C_2$, a map in $\Sp^{C_2}$ is a stable equivalence if and only if it is a $\pi_{*,*}^{C_2}$-isomorphism (see the discussion following (\ref{eq:cofiber})).
The $C_2$-equivariant homotopy groups of $Y$ can be effectively analyzed from the homotopy pullback (isotropy separation square) \cite{GreenleesMay}
\begin{equation}\label{eq:Tate}
\xymatrix{
Y \ar[r] \ar[d] & Y^{\Phi} \ar[d] \\
Y^h \ar[r] & Y^t
}
\end{equation}
where\footnote{The terminology here comes from the fact that the fixed points $Y^{hC_2}$, $Y^{\Phi C_2}$, and $Y^{tC_2}$ are the homotopy fixed points, geometric fixed points, and Tate spectrum of $Y$, respectively.}
\begin{align*}
Y^h & := F((EC_2)_+, Y), & \text{(homotopy completion)}, \\
Y^\Phi & := Y \wedge \td{EC_2}, & \text{(geometric localization)}, \\
Y^t & := (Y^h)^\Phi, & \text{(equivariant Tate spectrum)}.
\end{align*}
We let $\Sp^{hC_2}$ denote the full subcategory of $\Sp^{C_2}$ consisting of homotopically complete spectra, and let $\Sp^{\Phi C_2}$ denote the full subcategory consisting of geometrically local spectra.  The $C_2$-geometric fixed points functor gives an equivalence of $\infty$-categories $\Sp^{\Phi C_2} \simeq \Sp$.

Bachmann's theorem (Theorem~\ref{thm:Rbetti}) effectively describes the homotopy theory of the geometric localization of $C_2$-Betti realization $\Be^{C_2}(-)^{\Phi}$.  This is because
\begin{enumerate}
\item for all $X \in \SH(\RR)$, we have
$$ \Be_\RR(X) = \Be^{C_2}(X)^{\Phi C_2}, $$
\item geometric localization is given by inverting $a := \Be^{C_2}(\rho) \in \pi^{C_2}_{-1,-1} S^{0,0}$:
\begin{equation}\label{eq:Phi}
 Y^\Phi \simeq Y[a^{-1}].
 \end{equation}
\end{enumerate}

Thus, Bachmann's theorem (Theorem \ref{thm:Rbetti}) can be restated in the following way.

\begin{thm}[see Thm.~\ref{thm:Bachmann2}]\label{thm:C2betti}
For all $X \in \SH(\RR)$, $C_2$-Betti realization induces an isomorphism
$$ \pi^{\RR}_{*,*}X[\rho^{-1}] \xto{\cong} \pi^{C_2}_{*,*}\Be^{C_2}(X)^\Phi, $$
and an equivalence
$$ \Be^{C_2}: \SH(\RR)[\rho^{-1}] \xto{\simeq} \Sp^{\Phi C_2}. $$ 
\end{thm}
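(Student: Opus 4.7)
This theorem is essentially a repackaging of Bachmann's theorem (Theorem~\ref{thm:Rbetti}) via the geometric fixed points equivalence $(-)^{\Phi C_2}: \Sp^{\Phi C_2} \xto{\simeq} \Sp$, together with the facts (1) and (2) that were noted just before the statement.

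The first step is to show that the composite
\[
\SH(\RR) \xrightarrow{\Be^{C_2}} \Sp^{C_2} \xrightarrow{(-)^{\Phi}} \Sp^{\Phi C_2}
\]
inverts $\rho$, so that it factors through $\SH(\RR)[\rho^{-1}]$. This is immediate from (\ref{eq:Phi}): since $\Be^{C_2}(\rho) = a$ and $Y^{\Phi} \simeq Y[a^{-1}]$, the image of $\rho$ is invertible in $\Sp^{\Phi C_2}$, and the universal property of localization yields a functor $F \colon \SH(\RR)[\rho^{-1}] \to \Sp^{\Phi C_2}$.

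Next I would check $F$ is an equivalence by composing with the geometric fixed points equivalence. For any $X \in \SH(\RR)$, property (1) gives
\[
(F(X))^{\Phi C_2} = (\Be^{C_2}(X)^\Phi)^{\Phi C_2} = \Be^{C_2}(X)^{\Phi C_2} = \Be_\RR(X),
\]
so $(-)^{\Phi C_2} \circ F$ is exactly the functor from Theorem~\ref{thm:Rbetti}, which Bachmann proved descends to an equivalence $\SH(\RR)[\rho^{-1}] \xto{\simeq} \Sp$. Since $(-)^{\Phi C_2}$ is itself an equivalence, so is $F$. For the bigraded homotopy group statement, I would use that $\sigma^{C_2} = 0$ forces $(S^{(i-j)+j\sigma})^{\Phi C_2} = S^{i-j}$, whence for $Y \in \Sp^{\Phi C_2}$,
\[
\pi^{C_2}_{i,j} Y \;\cong\; \pi_{i-j} Y^{\Phi C_2}.
\]
Applying this with $Y = \Be^{C_2}(X)^\Phi$ identifies $\pi^{C_2}_{i,j}\Be^{C_2}(X)^\Phi$ with $\pi_{i-j}\Be_\RR(X)$, and the latter is $\pi^{\RR}_{i,j}X[\rho^{-1}]$ by Theorem~\ref{thm:Rbetti}.

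I do not anticipate any serious obstacle: everything is formal once Theorem~\ref{thm:Rbetti} and the presentation $(-)^{\Phi} = (-)[a^{-1}]$ are in hand. The one point that deserves care is the compatibility of bigradings: in passing from the single grading on $\Sp$ to the bigrading on $\Sp^{\Phi C_2}$, one must note that both $\rho$ and $a$ have bidegree $(-1,-1)$, so that inverting them collapses the bigrading to a single $(i-j)$-grading on each side, making the stated isomorphism $\pi^{\RR}_{*,*}X[\rho^{-1}] \xto{\cong} \pi^{C_2}_{*,*}\Be^{C_2}(X)^\Phi$ consistent with Bachmann's.
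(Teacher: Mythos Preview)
Your proposal is correct and follows essentially the same approach as the paper: the paper's proof (Theorem~\ref{thm:Bachmann2}) also reduces the homotopy-group isomorphism to the chain $\pi^{\RR}_{i,j} X[\rho^{-1}] \cong \pi_{i-j}\Be_\RR(X) \cong \pi_{i-j}\Be^{C_2}(X)^{\Phi C_2} \cong \pi^{C_2}_{i,j}\Be^{C_2}(X)^{\Phi}$, and treats the categorical equivalence as immediate from Bachmann's theorem together with $\Sp^{\Phi C_2} \simeq \Sp$. Your write-up is slightly more explicit about the factorization through $\SH(\RR)[\rho^{-1}]$, but the content is the same.
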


We are thus left to describe the homotopy theory of the homotopy completion of the $C_2$-Betti realization.

We first note that a map 
$$ f: Y_1 \to Y_2 $$
in $\Sp^{hC_2}$ is an equivalence if and only if the underlying map
$$ f^e: Y_1^e \to Y_2^e $$ 
of spectra is a non-equivariant equivalence. We therefore first study $\Be^{C_2}(-)^e$.
Consider the diagram of adjoint functors
\begin{equation}
\xymatrix@R+1em@C+1em{
\SH({\RR}) \ar@<.5ex>[r]^{\Be^{C_2}} \ar@<-.5ex>[d]_{\zeta^*}   & \Sp^{C_2} \ar@<.5ex>[l]^{\Sing^{C_2}} \ar@<-.5ex>[d]_{\Res_e^{C_2}} \\
\SH({\CC}) \ar@<.5ex>[r]^{\Be} \ar@<-.5ex>[u]_{\zeta_*} & \Sp \ar@<-.5ex>[u]_{\Ind_e^{C_2}}
\ar@<.5ex>[l]^{\Sing} 
}
\end{equation}
where $(\zeta^*, \zeta_*)$ are the base change functors associated to the morphism
$$ \zeta: \Spec(\CC) \to \Spec(\RR) $$
and $(\Res_e^{C_2},\Ind_e^{C_2})$ are the change of group functors associated to the inclusion
$$ e \righthookto C_2. $$
We will prove the following theorem, which has also been independently obtained by Isaksen-Kong-Wang-Xu.

\begin{thm}[see Cor.~\ref{cor:EtaleMonoidalBarrBeck}]\label{thm:Cmod}
Under the equivalence
$$ \zeta_*\zeta^* S^0 \simeq \Sigma^\infty_+ \Spec(\CC) $$
the adjunction $(\zeta^*,\zeta_*)$ induces an equivalence
$$ \SH(\CC) \simeq \Mod_{\SH(\RR)}(\Sigma^\infty_+\Spec(\CC)). $$
\end{thm}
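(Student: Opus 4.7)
The plan is to apply the $\infty$-categorical monadic Barr--Beck theorem to the adjunction $\zeta^* \dashv \zeta_*$, and to identify the resulting monad on $\SH(\RR)$ via the projection formula. Since $\zeta: \Spec(\CC) \to \Spec(\RR)$ is finite étale (in particular smooth and proper), the projection formula $\zeta_*(\zeta^* X \otimes Y) \simeq X \otimes \zeta_* Y$ is available; specialising to $Y = S^0$ in $\SH(\CC)$ yields a natural equivalence of monads
\[
T(-) \;:=\; \zeta_* \zeta^*(-) \;\simeq\; (-) \otimes A, \qquad A \;:=\; \zeta_*\zeta^* S^0 \;\simeq\; \Sigma^\infty_+ \Spec(\CC),
\]
so that the $\infty$-category of $T$-algebras in $\SH(\RR)$ is canonically identified with $\Mod_{\SH(\RR)}(A)$. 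Under this identification the Barr--Beck comparison functor becomes $Y \mapsto \zeta_* Y$, equipped with its natural $A$-module structure via the counit $\zeta^*\zeta_* \to \id$.

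To invoke monadicity it remains to check that $\zeta_*$ is conservative and preserves geometric realisations of $\zeta_*$-split simplicial objects. The second condition is automatic here: because $\zeta$ is finite étale, the six-functor formalism on $\SH$ supplies an ambidexterity equivalence $\zeta_\# \simeq \zeta_*$, exhibiting $\zeta_*$ as a left adjoint to $\zeta^*$, so in particular $\zeta_*$ preserves all small colimits. For conservativity, I would use flat base change along the Cartesian square of two copies of $\zeta$, whose upper-left corner is the affine scheme $\Spec(\CC) \times_{\Spec(\RR)} \Spec(\CC) \simeq \Spec(\CC) \sqcup \Spec(\CC)$. This gives a natural equivalence $\zeta^*\zeta_* Y \simeq Y \oplus c^* Y$, with $c: \Spec(\CC) \to \Spec(\CC)$ complex conjugation, so $\zeta_* Y \simeq 0$ forces $\zeta^* \zeta_* Y \simeq 0$ and hence $Y \simeq 0$ as a direct summand.

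The main technical obstacles are the projection formula and the flat base-change equivalence used above: both must hold in the required non-cellular generality, and they do so as structural consequences of the six-functor formalism for the finite étale map $\zeta$ rather than of any hypothesis on the objects involved. Once these structural inputs are in hand, the remainder is formal: the projection formula pins the monad down to tensoring with $A$, ambidexterity yields colimit preservation, base change delivers conservativity, and Barr--Beck then upgrades the adjunction $(\zeta^*, \zeta_*)$ to the desired equivalence $\SH(\CC) \simeq \Mod_{\SH(\RR)}(\Sigma^\infty_+ \Spec(\CC))$.
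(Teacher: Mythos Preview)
Your proposal is correct and follows essentially the same approach as the paper: both verify the hypotheses of the monoidal Barr--Beck theorem of Mathew--Naumann--Noel (conservativity, colimit preservation, projection formula) for the finite \'etale map $\zeta$, using ambidexterity $\zeta_\sharp \simeq \zeta_*$ for colimit preservation and base change along the self-fiber-product for conservativity. Your identification $\zeta^*\zeta_* Y \simeq Y \oplus c^* Y$ is in fact slightly more precise than the paper's phrasing ``a finite coproduct of copies of $x$,'' though the essential point---that $Y$ appears as a direct summand---is the same.
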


Let $C\rho \in \SH(\RR)$ denote the cofiber of $\rho \in \pi^{\RR}_{-1,-1}S^{0,0}$.  Since $\zeta^*(\rho)$ is null in $\pi^{\CC}_{-1,-1}S^{0,0}$, there is a map
\begin{equation}\label{eq:Crho}
 C(\rho) \to \Sigma^\infty_+ \Spec(\CC)
 \end{equation}
which we show is a $\pi^{\RR}_{*,*}$-isomorphism after $p$-completion (Prop.~\ref{prop:cofiberRho}).  The real motivic spectrum $\Sigma^\infty_+ \Spec(\CC)$ is not cellular (Rmk.~\ref{rmk:cofiberRho}), so $C\rho$ may be regarded as its $p$-complete cellular approximation.  We deduce the following (which was also independently observed by Isaksen-Kong-Wang-Xu):

\begin{cor}[see Cor.~\ref{cor:CellularBarrBeck}]\label{cor:Cmod}
The adjuction $(\zeta^*, \zeta_*)$ and equivalence (\ref{eq:Crho}) induces an equivalence
$$ \SH_{\cell}(\CC)^{\wedge}_p \simeq \Mod_{\SH_{\mr{cell}}({\RR})^{\wedge}_p}(C(\rho)). $$
In particular, for $X \in \SH_{\mr{cell}}({\RR})^{\wedge}_p$ there is an isomorphism
$$ \pi^{\CC}_{*,*}(\zeta^* X) \cong \pi^{\RR}_{*,*}(X \wedge C\rho). $$
\end{cor}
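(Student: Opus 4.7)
The plan is to deduce the corollary from Theorem~\ref{thm:Cmod}, which identifies $\SH(\CC)$ with $\Mod_{\SH(\RR)}(\Sigma^\infty_+\Spec(\CC))$, by restricting to cellular objects, $p$-completing, and then invoking (\ref{eq:Crho}) to replace the non-cellular algebra $\Sigma^\infty_+\Spec(\CC)$ with the cellular algebra $C\rho$.

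First I would verify that $\zeta^*\colon \SH(\RR)\to\SH(\CC)$ preserves cellular objects: it is colimit-preserving and symmetric monoidal with $\zeta^*S^{i,j}=S^{i,j}$. Under the equivalence of Theorem~\ref{thm:Cmod}, $\SH_{\cell}(\CC)$ then corresponds to the localizing subcategory of $\Mod_{\SH(\RR)}(\Sigma^\infty_+\Spec(\CC))$ generated by the free modules $\Sigma^\infty_+\Spec(\CC)\wedge S^{i,j}$---the \emph{relatively cellular} modules. After $p$-completion, Proposition~\ref{prop:cofiberRho} says that (\ref{eq:Crho}) is a $\pi^{\RR}_{*,*}$-isomorphism; since $C\rho$ is cellular (being the cofiber of a map of spheres) and $\pi^{\RR}_{*,*}$-isomorphisms detect equivalences between cellular motivic spectra (as recalled in the introduction), this exhibits $C\rho$ as the cellular approximation of $(\Sigma^\infty_+\Spec(\CC))^{\wedge}_p$. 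Transporting the canonical $E_\infty$-structure along (\ref{eq:Crho}) endows $C\rho$ with an $E_\infty$-structure in $\SH_{\cell}(\RR)^{\wedge}_p$, and I would then identify
$$ \Mod_{\SH_{\cell}(\RR)^{\wedge}_p}(C\rho) \simeq \text{(rel.\ cellular $p$-complete modules over $\Sigma^\infty_+\Spec(\CC)$)} \simeq \SH_{\cell}(\CC)^{\wedge}_p. $$

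The ``in particular'' statement on homotopy groups then follows formally from the projection formula $\zeta_*\zeta^*X\simeq X\wedge\Sigma^\infty_+\Spec(\CC)$ combined with Proposition~\ref{prop:cofiberRho}:
$$ \pi^{\CC}_{*,*}(\zeta^*X) \cong \pi^{\RR}_{*,*}(\zeta_*\zeta^*X) \cong \pi^{\RR}_{*,*}(X\wedge\Sigma^\infty_+\Spec(\CC)) \cong \pi^{\RR}_{*,*}(X\wedge C\rho). $$
The main obstacle will be the algebra comparison: one must ensure that $C\rho$ carries an $E_\infty$-structure in the cellular $p$-complete setting compatible with the one on $\Sigma^\infty_+\Spec(\CC)$, and that the resulting comparison of module categories respects these structures rather than merely identifying underlying objects. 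I expect this to reduce to a Schwede--Shipley-style recognition argument once the algebra structure on $C\rho$ is pinned down via (\ref{eq:Crho}).
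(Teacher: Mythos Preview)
Your proposal is correct and follows essentially the same route as the paper: start from the Barr--Beck identification $\SH(\CC)\simeq\Mod_{\SH(\RR)}(\Spec\CC_+)$, cellularize, $p$-complete, and invoke Prop.~\ref{prop:cofiberRho} to replace $\Spec\CC_+$ by $C\rho$. The obstacle you flag about the $E_\infty$-structure is handled in the paper not by transporting structure along (\ref{eq:Crho}) but by observing that cellularization is lax monoidal, so $C\rho^\wedge_p\simeq\Cell(\Spec\CC_+)^\wedge_p$ is automatically $E_\infty$; the module-category comparison then comes from a general lemma (Lem.~\ref{lem:CellularizationOnModules}) identifying $\Mod_{\cC'}(\Cell A)$ with the localizing subcategory of $\Mod_\cC(A)$ generated by the free modules $S_i\otimes A$, which is exactly your ``relatively cellular'' description.
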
 

Combining Corollary~\ref{cor:Cmod} with Theorem~\ref{thm:Cbetti}, we deduce that for $X \in \SH_{\mr{cell}}(\RR)^{\wedge}_p$,
$$ \pi_{i}(\widehat\Be{}_p^{C_2}({X})^e) \cong \pi^{\RR}_{i,j} (X \wedge C\rho[\tau^{-1}]). $$
In particular, $\tau$ exists as a self-map
$$ \tau: \Sigma^{0,-1} C(\rho)^{\wedge}_p \to C(\rho)^\wedge_p. $$

Let $C(\rho^i)$ denote the cofiber of $\rho^i \in \pi^{\RR}_{-i,-i}S^{0,0}$.  We will prove:

\begin{thm}[see Thm.~\ref{thm:tauselfmap} and Prop.~\ref{prop:tauodd}]\label{thm:selfmap}
For each $i \ge 1$, there exists a $j$ so that $C(\rho^i)^{\wedge}_p$ has a $\tau^j$ self map
$$ \tau^j: \Sigma^{0,-j} C(\rho^i)^{\wedge}_p \to C(\rho^i)^{\wedge}_p. $$ 
\end{thm}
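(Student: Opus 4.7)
The plan is to induct on $i$. The base case $i=1$ is essentially contained in the paragraph immediately preceding the theorem: combining Corollary~\ref{cor:Cmod} with Theorem~\ref{thm:Cbetti} produces an isomorphism $\pi^{\RR}_{*,*}(C\rho)^{\wedge}_p \cong \pi^{\CC}_{*,*}(S^{0,0})^{\wedge}_p$, so $\tau \in \pi^{\CC}_{0,-1}(S^{0,0})^{\wedge}_p$ transports to a class $\tilde{\tau}\in \pi^{\RR}_{0,-1}(C(\rho))^{\wedge}_p$, which promotes to the desired self-map via the $E_{\infty}$-ring structure on $C(\rho)^{\wedge}_p$ furnished by Corollary~\ref{cor:Cmod}. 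For odd primes (Proposition~\ref{prop:tauodd}), I would expect the argument to be simpler: $\rho$ is $2$-torsion, so after $p$-completion at odd $p$ it is null, and $C(\rho^i)^{\wedge}_p$ splits as a wedge of motivic spheres, making any $\tau^j$-self map essentially automatic.

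For the inductive step at $p=2$, I would exploit the octahedral cofiber sequence
\begin{equation*}
\Sigma^{-1,-1} C(\rho^{i-1})^{\wedge}_p \xrightarrow{\alpha} C(\rho^i)^{\wedge}_p \xrightarrow{\beta} C(\rho)^{\wedge}_p \xrightarrow{\partial} \Sigma^{0,-1} C(\rho^{i-1})^{\wedge}_p
\end{equation*}
coming from the factorization $\rho^i = \rho \circ \rho^{i-1}$. Applying $[\Sigma^{0,-j} C(\rho^i)^{\wedge}_p, -]$ yields the exact sequence
\begin{equation*}
[\Sigma^{0,-j} C(\rho^i), C(\rho^i)] \xrightarrow{\beta_*} [\Sigma^{0,-j} C(\rho^i), C(\rho)] \xrightarrow{\partial_*} [\Sigma^{0,-j} C(\rho^i), \Sigma^{0,-1} C(\rho^{i-1})],
\end{equation*}
and the candidate map $\tau^j \circ \Sigma^{0,-j}\beta$ lifts to a self-map of $C(\rho^i)^{\wedge}_p$ precisely when the obstruction class $\partial \circ \tau^j \circ \Sigma^{0,-j}\beta$ in the third group vanishes. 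The task therefore reduces to showing that this obstruction class is killed once $j$ is sufficiently large.

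The technical heart of the proof, and the step I would expect to be the main obstacle, is this vanishing. A natural strategy is to use the inductive $\tau^k$-self map on $C(\rho^{i-1})^{\wedge}_p$ to rewrite the obstruction as $\tau^{j-k}$ acting on a fixed residual class in $[\Sigma^{0,-k} C(\rho^i)^{\wedge}_p, \Sigma^{0,-1} C(\rho^{i-1})^{\wedge}_p]$, and then invoke a vanishing-line argument for $\tau$-power torsion in this mapping group. The latter should follow from Corollary~\ref{cor:Cmod} and Theorem~\ref{thm:Cbetti}, which identify $\tau$-inverted cellular real motivic mapping groups with ordinary $p$-complete stable stems; the finite motivic weight range of the complexes involved should then force the obstruction into a region where finitely many powers of $\tau$ annihilate it. Quantifying this vanishing line effectively is what I anticipate as the main work of Theorem~\ref{thm:tauselfmap}.
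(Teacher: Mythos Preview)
Your odd-prime argument contains an error: $\rho$ is \emph{not} $2$-torsion in $\pi^{\RR}_{*,*}S^{0,0}$, and it is certainly not null after $p$-completion for odd $p$. Indeed, by Bachmann's theorem $\SH(\RR)[\rho^{-1}] \simeq \Sp$, so $\rho$ is not even nilpotent. What is true is that after inverting $2$ one has the idempotent splitting $S^{0,0} \simeq S^{0,0,+} \vee S^{0,0,-}$ (via $\epsilon$), with $\rho$ invertible on the minus part and zero on the plus part; thus $C(\rho^i)^{\wedge}_p$ is supported entirely on the plus part and does split. But one still needs to know that $\tau^2$ exists in $\pi^{\RR}_{0,-2}(S^{0,0})^{\wedge}_{p,\rho}$, and the paper extracts this from Stahn's computation of the odd-primary motivic Adams--Novikov spectral sequence rather than from any formal splitting argument.

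At $p=2$ your approach diverges substantially from the paper's, and the obstruction-theoretic induction you outline is not carried out. The paper does \emph{not} induct on $i$. Instead it first works entirely in $\Sp^{C_2}$: using the identification $C(a^i) \simeq \Sigma^{1-i\sigma}S(i\sigma)_+$ and the Adams isomorphism, $\pi^{C_2}_{0,-N}C(a^i)$ is identified with $\pi_{N}\Sigma P^{N-1}_{N-i}$, and James periodicity (reducibility of $P^{2^{\gamma(i-1)}-1}_{2^{\gamma(i-1)}-i}$) produces a class $u_{2^{\gamma(i-1)}}$ with the correct Hurewicz image. This $u$-self map is then lifted to $C(\rho^i)^{\wedge}_2$ by invoking the Dugger--Isaksen isomorphism range $\pi^{\RR}_{i,j} \cong \pi^{C_2}_{i,j}$ for $i \ge 3j-5$, together with a five-lemma argument on the long exact sequences for $C(\rho^i)$ and $C(a^i)$. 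Your proposed vanishing-line argument for the obstruction $\partial \circ \tau^j \circ \Sigma^{0,-j}\beta$ would need to control $\tau$-torsion in $[\Sigma^{0,-j}C(\rho^i), \Sigma^{0,-1}C(\rho^{i-1})]^{\wedge}_2$, and neither Corollary~\ref{cor:Cmod} nor Theorem~\ref{thm:Cbetti} gives that directly: those results describe what happens after $\tau$ is inverted, not the size of the $\tau$-torsion. Moreover, even granting a lift, you would still need to verify that the lifted map has Hurewicz image $\tau^j$ rather than $\tau^j$ plus a $\rho$-divisible correction, which your outline does not address.
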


Our proof of the existence of these $\tau$-self maps at the prime $2$ relies on first proving the existence of their  $C_2$-Betti realizations,  and then using a theorem of Dugger-Isaksen to lift the self maps to the real motivic category \cite{DuggerIsaksenC2}.  Because this approach involves some analysis of the $C_2$-equivariant stable stems, it may be of independent interest. 

We shall let $C(\rho^i)^{\wedge}_p[\tau^{-1}]$ denote the telescope of this $\tau^j$-self map.  Define, for $X \in \SH(\RR)^{\wedge}_p$:\footnote{For $p$ odd, it turns out that independent of $i$, one can take $j = 2$ in Thm.~\ref{thm:selfmap} (see Prop.~\ref{prop:tauodd}).  Consequently, $X^{\wedge}_\rho$ has a $\tau^2$-self map, and the spectrum $X^{\wedge}_\rho[\tau^{-1}]$ can be simply taken to be the telescope of this $\tau^2$-self map on $X^\wedge_\rho$.}
$$ X^{\wedge}_\rho[\tau^{-1}] := \holim_i X \wedge C(\rho^i)[\tau^{-1}]. $$

Our main theorem is the following:

\begin{thm}[see Cor.~\ref{cor:BorelFullyFaithful} and Thm.~\ref{thm:main}]\label{thm:mainintro}
For $X \in \SH_{\mr{cell}}({\RR})^{\wedge}_p$, the $p$-completed $C_2$-Betti realization functor $\widehat\Be{}_p^{C_2}$ induces an isomorphism
$$ \pi^{\RR}_{*,*} X^{\wedge}_\rho[\tau^{-1}] \xto{\cong} \pi^{C_2}_{*,*} \widehat\Be{}_p^{C_2} (X)^h $$
and the right adjoint 
$$ \Cell \Sing^{C_2}: (\Sp^{hC_2})^{\wedge}_p \to \SH_{\mr{cell}}({\RR})^{\wedge}_p $$
of $p$-complete, homotopy complete $C_2$-Betti realization
is fully faithful.
\end{thm}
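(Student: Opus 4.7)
\emph{Strategy.} The plan is to compare the $\rho$-adic tower on $\SH_{\cell}(\RR)^{\wedge}_p$ with the $a$-adic tower on $(\Sp^{C_2})^{\wedge}_p$, where $a := \Be^{C_2}(\rho)$, establish the $\pi_{*,*}$-isomorphism at each finite stage $X \wedge C(\rho^i)$ using Cor.~\ref{cor:Cmod} and Thm.~\ref{thm:Cbetti}, then pass to $\holim_i$, and finally extract full faithfulness from the resulting $\pi$-isomorphism.

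\emph{Reduction to finite stages.} First I would verify that on $p$-complete bounded-below $C_2$-spectra, the homotopy completion $(-)^h$ coincides with the $a$-adic completion $\holim_i(-) \wedge C(a^i)$; this follows from $\td{EC_2} \simeq S^0[a^{-1}]$ together with the isotropy separation square~(\ref{eq:Tate}). Since $\Be^{C_2}(\rho) = a$, one therefore has
\[ \widehat{\Be}{}_p^{C_2}(X)^h \;\simeq\; \holim_i\, \widehat{\Be}{}_p^{C_2}\bigl(X \wedge C(\rho^i)\bigr). \]
It thus suffices to prove the $\pi^{C_2}_{*,*}$-isomorphism at each finite stage $X \wedge C(\rho^i)$, and to check that the Milnor $\lim^1$ obstruction at the inverse limit vanishes after $p$-completion.

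\emph{Finite stages.} For $i = 1$, Cor.~\ref{cor:Cmod} gives $X \wedge C\rho \simeq \zeta_*\zeta^* X$. The commuting square of left adjoints $\Be \circ \zeta^* \simeq \Res_e^{C_2} \circ \Be^{C_2}$ dualizes, by direct inspection on the generator $\Spec(\CC)$ whose $\CC$-points form a free $C_2$-set, to $\Be^{C_2} \circ \zeta_* \simeq \Ind_e^{C_2} \circ \Be$. Hence
\[ \widehat{\Be}{}_p^{C_2}(X \wedge C\rho) \;\simeq\; \Ind_e^{C_2}\bigl(\widehat{\Be}{}_p(\zeta^* X)\bigr), \]
whose bigraded equivariant homotopy $\pi^{C_2}_{i,j}$ equals $\pi_i(\widehat{\Be}{}_p(\zeta^* X))$ independently of $j$; Thm.~\ref{thm:Cbetti} applied to $\zeta^* X \in \SH_{\cell}(\CC)^{\wedge}_p$ identifies this with $\pi^\CC_{i,j}(\zeta^*X)[\tau^{-1}] = \pi^\RR_{i,j}(X \wedge C\rho)[\tau^{-1}]$, where the last equality again invokes Cor.~\ref{cor:Cmod}. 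For $i > 1$, I would induct along the filtration of $C(\rho^i)$ by $\rho$-powers, whose associated graded pieces are $\Sigma^{-k,-k}C\rho$ for $0 \le k < i$, alongside the parallel $a$-adic filtration of $C(a^i)$ on the Betti side, applying the five-lemma at each extension. The $\tau^j$-self maps furnished by Thm.~\ref{thm:selfmap} are carried by $\widehat{\Be}{}_p^{C_2}$ to equivalences on these $a^i$-torsion Borel $C_2$-spectra (since $\widehat{\Be}{}_p(\tau)$ is already an equivalence on underlying spectra by Thm.~\ref{thm:Cbetti}), so $\tau$-inversion on the source matches the identity on the target throughout the induction.

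\emph{Limit and full faithfulness.} Passing to $\holim_i$ of the finite-stage isomorphisms yields the desired $\pi^{\RR}_{*,*}(X^\wedge_\rho[\tau^{-1}]) \xto{\cong} \pi^{C_2}_{*,*}\widehat{\Be}{}_p^{C_2}(X)^h$. For full faithfulness of $\Cell \Sing^{C_2}$, it suffices to show the counit $\widehat{\Be}{}_p^{C_2}(\Cell \Sing^{C_2} Y) \to Y$ is an equivalence for each $Y \in (\Sp^{hC_2})^{\wedge}_p$; since both spectra are homotopy-complete and $p$-complete, equivalence is detected by $\pi^{C_2}_{*,*}$, and setting $X := \Cell \Sing^{C_2}(Y)$ the established $\pi$-isomorphism together with the triangle identities of the adjunction reduces the claim to the observation that $X$ is already $\rho$-complete and $\tau$-local (these localizations being preserved by $\Cell \Sing^{C_2}$ applied to a Borel object). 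The hardest part will be the finite-stage comparison: specifically, verifying that the $\tau^j$-self maps of Thm.~\ref{thm:selfmap} are natural enough in $i$ under the transition maps of the $\rho$-adic tower that they map compatibly to equivalences in the $a$-adic tower on the Betti side, together with the $\lim^1$ vanishing at the $\holim$ step, which should be controlled by $p$-completeness and uniform bounds on the $j$-exponents of the $\tau$-self maps.
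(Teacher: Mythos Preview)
Your approach is essentially correct but inverts the order of the paper's argument, and this inversion is the source of the difficulties you flag at the end.

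The paper proves full faithfulness \emph{first} and \emph{without} using the $\tau$-self maps. The mechanism is Prop.~\ref{prop:FullyFaithfulLift}: given a monoidal adjunction satisfying the projection formula, if $R_1$ is fully faithful on $A$-modules then $R_n$ is fully faithful on $C(\iota^n)$-modules for all $1\le n\le\infty$, because the counit can be checked after the conservative base-change to $A$-modules (Lem.~\ref{lem:BaseChangeConservative}). Applied with $A=C(\rho)$, the case $n=1$ is exactly Thm.~\ref{thm:ComplexSingFullyFaithful} transported via Cor.~\ref{cor:CellularBarrBeck}, and the case $n=\infty$ is Cor.~\ref{cor:BorelFullyFaithful}. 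No induction on the $\rho$-filtration, no five-lemma, no $\tau$.

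Only \emph{after} full faithfulness is established does the paper bring in the $\tau$-self maps, and then only to \emph{identify} the localization functor $\Sing^{C_2,n}\widehat\Be{}_p^{C_2,n}$ with $(-)[\tau_N^{-1}]$ on $C(\rho^n)$-modules (Lem.~\ref{lem:Crhonloc}). Crucially, the transition maps in the tower $\{X\wedge C(\rho^n)[\tau_N^{-1}]\}_n$ are \emph{defined} by transporting the canonical maps $\Sing^{C_2,n}\widehat\Be{}_p^{C_2,n}(-)\to\Sing^{C_2,n-1}\widehat\Be{}_p^{C_2,n-1}(-)$ across these equivalences. So the compatibility you worry about---that the $\tau_N$ for varying $n$ assemble into a coherent tower mapping to the $a$-adic tower---is not checked but built in by fiat, and the $\lim^1$ issue never arises because one is taking the limit of a tower that is already, levelwise, identified with $\Sing$ applied to the $a$-adic tower.

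Your five-lemma induction for the finite stages would work, but it is subsumed by the one-line conservativity argument in Prop.~\ref{prop:FullyFaithfulLift}. Your derivation of full faithfulness from the $\pi$-isomorphism is correct but needs the projection formula (to identify $\Cell\Sing^{C_2}(Y)\wedge C(\rho^n)$ with $\Sing^{C_2,n}(Y\wedge C(a^n))$ and thereby see it is $\tau_N$-local); once you grant that, you are essentially reproving Prop.~\ref{prop:FullyFaithfulLift} by hand. Finally, the bounded-below hypothesis in your reduction step is unnecessary: $Y^h\simeq Y^\wedge_a$ holds for all $Y\in\Sp^{C_2}$ by the identification $(EC_2)_+\simeq\colim_i S(i\sigma)_+$ and dualization, as in (\ref{eq:acompletion}).
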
 

Thus, Theorems~\ref{thm:C2betti} and \ref{thm:mainintro} combine to express the $RO(C_2)$-graded equivariant homotopy groups of $\widehat\Be{}_p^{C_2}(X)^\Phi$ and $\widehat\Be{}_p^{C_2}({X})^h$ in terms of the real motivic homotopy groups of $X$.  By the isotropy separation square (\ref{eq:Tate}), we just need to be able to compute $\pi^{C_2}_{*,*}\widehat\Be{}_p^{C_2}(X)^t$ (and the maps on homotopy groups) to recover $\pi^{C_2}_{*,*} \widehat\Be{}_p^{C_2} (X)$, but this is easily accomplished by combining Theorem~\ref{thm:main} with (\ref{eq:Tate}) to deduce (for $X$ cellular and $p$-complete) an isomorphism\footnote{For $p$ odd, the situation is much simpler, as this Tate spectrum is contractible since $2 = \abs{C_2}$ is invertible.}
$$ \pi^{C_2}_{*,*}\widehat\Be{}_p^{C_2}(X)^t \cong \pi^{\RR}_{*,*}X^\wedge_\rho[\tau^{-1}][\rho^{-1}]. $$

Finally, we will show that the isotropy separation square (\ref{eq:Tate}) implies that Theorems~\ref{thm:C2betti} and \ref{thm:mainintro} combine to show that $p$-complete $C_2$-equivariant stable homotopy is a localization of real motivic cellular stable homotopy.

\begin{thm}[see Thm.~\ref{thm:C2-SingFullyFaithful}]
The right adjoint to $p$-complete cellular $C_2$-Betti realization
$$ \Cell \Sing^{C_2}: (\Sp^{C_2})^{\wedge}_p \to \SH_{\cell}(\RR)^{\wedge}_p $$
is fully faithful.
\end{thm}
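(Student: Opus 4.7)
The plan is to deduce this statement from Theorem~\ref{thm:C2betti} (which handles the geometric piece) and Theorem~\ref{thm:mainintro} (which handles the homotopy complete piece), by exploiting the recollement structure on $\Sp^{C_2}$ encoded in the isotropy separation square (\ref{eq:Tate}). The first step is an equivalence-detection principle: a map $f\colon Y_1 \to Y_2$ in $\Sp^{C_2}$ is an equivalence if and only if both $f^\Phi$ and $f^h$ are equivalences. Indeed, since $Y_i^t \simeq (Y_i^h)^\Phi$ by definition, the pullback square (\ref{eq:Tate}) recovers $Y_i$ from the pair $(Y_i^h, Y_i^\Phi)$ together with the comparison map $Y_i^\Phi \to Y_i^t$, so equivalence of $f^h$ forces equivalence of $f^t$, and then joint equivalence of $f^h$ and $f^\Phi$ forces equivalence of $f$. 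This detection principle is stable under $p$-completion.

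Writing $L = \widehat\Be{}_p^{C_2}$ and $R = \Cell\Sing^{C_2}$, proving $R$ fully faithful is equivalent to proving the counit $\epsilon_Y \colon LR(Y) \to Y$ is an equivalence for every $Y \in (\Sp^{C_2})^\wedge_p$, and by the above principle this reduces to checking that $(\epsilon_Y)^h$ and $(\epsilon_Y)^\Phi$ are both equivalences. For the homotopy complete piece, one identifies $(\epsilon_Y)^h$ with the counit at $Y^h$ of the adjunction whose left adjoint is $W \mapsto L(W)^h$ and whose right adjoint is $\Cell\Sing^{C_2}\colon (\Sp^{hC_2})^\wedge_p \to \SH_{\cell}(\RR)^\wedge_p$; by Theorem~\ref{thm:mainintro} this right adjoint is fully faithful, so the counit in question is an equivalence. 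For the geometric piece, equation (\ref{eq:Phi}) shows that $(-)^\Phi \circ L$ factors through $\rho$-inversion on $\SH_{\cell}(\RR)^\wedge_p$, and one identifies $(\epsilon_Y)^\Phi$ with the counit at $Y^\Phi$ of the induced adjunction between the $p$-completed $\rho$-inverted real motivic category and $(\Sp^{\Phi C_2})^\wedge_p$; Theorem~\ref{thm:C2betti} upgrades this adjunction to an equivalence of $\infty$-categories, making its counit automatically an equivalence.

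The main obstacle I anticipate is carrying out the two identifications above cleanly: verifying that $(\epsilon_Y)^h$ and $(\epsilon_Y)^\Phi$ genuinely coincide with the counits of the restricted adjunctions of Theorems~\ref{thm:mainintro} and~\ref{thm:C2betti}. Concretely, one must check that cellular approximation $\Cell$ and $p$-completion interact correctly with the localizations $(-)^h$ and $(-)^\Phi$; equivalently, that the unit maps $Y \to Y^h$ and $Y \to Y^\Phi$ in $\Sp^{C_2}$ become equivalences after $R$ followed by the respective left adjoints $(-)^h \circ L$ and $(-)^\Phi \circ L$. On the geometric side this should follow from the observation that $\SH(\RR)[\rho^{-1}] \simeq \Sp^{\Phi C_2} \simeq \Sp$ is generated under colimits by the motivic spheres, so cellularization is harmless after $\rho$-inversion; on the homotopy complete side, compatibility is essentially built into the setup, since $(-)^h \circ L$ is by definition the left adjoint of the adjunction appearing in Theorem~\ref{thm:mainintro}.
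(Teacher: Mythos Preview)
Your overall strategy---use the isotropy separation recollement to reduce checking the counit $\epsilon_Y$ to checking $(\epsilon_Y)^h$ and $(\epsilon_Y)^\Phi$ separately, then invoke Theorems~\ref{thm:mainintro} and~\ref{thm:C2betti} on the two pieces---is exactly the paper's approach, which it packages abstractly as Lem.~\ref{lem:RecollementFullyFaithful}. The gap is in the step you yourself flag as the main obstacle: the identifications of $(\epsilon_Y)^h$ and $(\epsilon_Y)^\Phi$ with the counits of the restricted adjunctions are not free, and neither of your suggested resolutions is adequate. For the Borel side, writing $F = \widehat\Be{}_p^{C_2}$ and $R = \Cell\Sing^{C_2}$, the identification amounts to $j^* FR\bigl(\mathrm{fib}(Y \to Y^h)\bigr) \simeq 0$; since that fiber $F(\widetilde{EC_2},Y)$ lies in the essential image of $i_*$, this is the condition $j^* FR i_* \simeq 0$. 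This is \emph{not} ``built into the setup'': one must argue that $\Sing^{C_2} i_*$ lands in $\rho$-local (hence already cellular) objects and that $\Res^{C_2}_e \Be^{C_2} \simeq \Be\,\zeta^*$ annihilates them because $\zeta^*$ kills $\rho$-inverted objects. The paper isolates this as Lem.~\ref{lm:VanishingOfWrongWayGluingFunctor}.

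The geometric side is subtler still. Your reformulation asks for $i^* FR(Y \wedge (EC_2)_+) \simeq 0$, but $Y \wedge (EC_2)_+$ is generally not Borel-complete, so it is not in the image of $j_*$ and Theorem~\ref{thm:mainintro} does not apply to it; meanwhile ``cellularization is harmless after $\rho$-inversion'' only lets you drop $\Cell$, it does not show that $\Sing^{C_2}(Y \wedge (EC_2)_+)$ has trivial $\rho$-localization. The paper instead works with the full recollement pullback square and proves the companion cross-term condition that the counit induces an equivalence $i^* FR j_* \xrightarrow{\sim} i^* j_*$ (Lem.~\ref{lm:EquivalenceOfRightWayGluingFunctors}). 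The key input there is the projection formula together with $\widetilde{EC_2} \simeq \Be^{C_2}(S^{0,0}[\rho^{-1}])$, which lets one pull $\widetilde{EC_2}\wedge(-)$ across $FR$ and reduce to the already-known full faithfulness of $Ri_*$. Once both cross-term lemmas are established, your two identifications go through and the proof concludes exactly as you outline.
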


We will apply our techniques to compute $\pi^{C_2}_{*,*} \widehat\Be{}_2^{C_2} X$ from $\pi^\RR_{*,*}X$, for $X$ equal to: 
\begin{enumerate}
\item $(H\FF_2)_\RR$, the real motivic mod $2$ Eilenberg-MacLane spectrum, with 
$$ \widehat\Be{}_2^{C_2} (H\FF_2)_\RR \simeq H\ul{\FF_2}, $$
the $C_2$-equivariant Eilenberg-MacLane spectrum associated to the constant Mackey functor $\ul{\FF_2}$,\vspace{10pt}
\item $(H\ZZ^{\wedge}_2)_\RR$, the real motivic $2$-adic Eilenberg-MacLane spectrum, with 
$$ \widehat\Be{}_2^{C_2} (H\ZZ^\wedge_2)_\RR \simeq H\ul{\ZZ^\wedge_2}, $$
the $C_2$-equivariant Eilenberg-MacLane spectrum associated to the constant Mackey functor $\ul{\ZZ^\wedge_2}$, and \vspace{10pt}
\item $kgl^\wedge_2$, the $2$-complete effective cover of the real motivic $K$-theory spectrum $KGL$, with
$$ \widehat\Be{}_2^{C_2} kgl^\wedge_2 \simeq kR^{\wedge}_2, $$
the $2$-complete connective Real $K$-theory spectrum.
\end{enumerate}
In the case of $(H\FF_2)_\RR$, the homotopy groups of the $C_2$-Betti realization differ from the motivic homotopy groups of the original spectrum through the addition of a notorious ``negative cone'' (see, for example, \cite[Fig.~1]{DuggerIsaksenC2}).  From the perspective of the mod $2$-Adams spectral sequence, the presence of this ``negative cone'' makes the equivariant homotopy of the Betti realizations of the other examples similarly more complicated than the motivic homotopy of the original spectra.  Our theory organically predicts the presence of the negative cone through a mechanism of local duality such as that studied in \cite{BHV}, and thus gives a more direct route to these equivariant computations by starting with the simpler motivic analogs. This connection with local duality deserves further study.

\subsection*{Relationship to the work of Heller-Ormsby}

Heller and Ormsby \cite{HellerOrmsby}, \cite{HellerOrmsby2} also study the relationship between real motivic and $C_2$-equivariant spectra (and their results extend to other real closed fields), but their analysis centers around the adjoint pair
$$ \adjunct{c_\RR^*}{\Sp^{C_2}}{\SH(\RR)}{ (c_\RR)_*}$$
where $c_\RR^*$ is the equivariant generalization of the constant functor (Def.~\ref{def:HellerOrmsby}).

Namely, Heller and Ormsby show that $\Sp^{C_2}$ is a \emph{colocalization} of $\SH(\RR)$ by showing that $c_\RR^*$ is fully faithful.  Their results allow them to compute, for $X \in \Sp^{C_2}$, integer graded motivic homotopy groups of $c^*_\RR X$ in terms of the integer graded equivariant homotopy groups of $X$.

Our results, by contrast, show that $C_2$-Betti realization exhibits $(\Sp^{C_2})^\wedge_p$ as a \emph{localization} of $\SH_{\cell}(\RR)^{\wedge}_p$, and this allows us to compute, for $X \in \SH_{\cell}(\RR)^{\wedge}_p$, the equivariant $RO(C_2)$-graded homotopy groups of $\widehat\Be{}_p^{C_2}(X)$ in terms of the bigraded motivic homotopy groups of $X$.  Nevertheless, we use the functor $c^*_\RR$ to prove our localization theorem.

\subsection*{Organization of the paper}

The first four sections of this paper are formal.
In Section~\ref{PresentableLimitFacts}, we recall some facts concerning limits of presentable $\infty$-categories.

In Section~\ref{sec:localization-of-symmetric-monoidal-infty-categories-with-respect-to-a-commutative-algebra}, we study Bousfield localizations of symmetric monoidal $\infty$-categories, their relation to completion, and discuss the interaction of these localizations with a monoidal Barr-Beck theorem of Mathew-Naumann-Noel \cite{MATHEW2017994}.

In Section~\ref{sec:cellularization}, we summarize some facts regarding cellularization in the $\infty$-categorical context, and the interaction of cellularization with localization and symmetric monoidal structures.

In Section~\ref{sec:recollements}, we recall the notion of a \emph{recollement} of $\infty$-categories, which is a formalism for decomposing an $\infty$-category using two complementary localizations.  We show that to prove an adjunction between two recollements is a localization, it suffices to check fully faithfullness on the constituents of the recollements.

In Section~\ref{sec:background}, we turn to the case of interest and recall some facts about motivic and equivariant homotopy theory that we will need later.

In Section~\ref{sec:tau}, we show that James periodicity in the $2$-primary equivariant stable stems results from the existence of \emph{$u$-self maps} on $C(a^i)^{\wedge}_2$, where $a$ is the Euler class of the sign representation.  We then use an isomorphism theorem of Dugger-Isaksen \cite{DuggerIsaksenC2} to lift these $u$-self maps to \emph{$\tau$-self maps} on $C(\rho^i)^\wedge_2$.
For an odd prime $p$, we explain how the work of Stahn \cite{Stahn} implies that every $(p,\rho)$-complete $\RR$-motivic spectrum has a $\tau^2$-self map.

Section~\ref{sec:the-equivariant-motivic-situation} contains all of our main theorems, and their proofs, concerning the localizations induced by Betti realization.

Section~\ref{sec:examples} contains examples, where we take various real motivic spectra, and use our theory to compute the $2$-primary $RO(C_2)$-graded $C_2$-equivariant homotopy groups of their Betti realizations from their $2$-primary motivic homotopy groups.  We also explain how to do these kinds of computations at an odd prime, where the story is much simpler.

\subsection*{Acknowledgments}  The authors would like to thank Dan Dugger, Elden Elmanto, Bert Guillou, Jeremiah Heller, Dan Isaksen, Kyle Ormsby, James Quigley, Guozhen Wang, and Zhouli Xu for helpful conversations related to the subject matter of this paper.  The authors would also like to thank the referee for their very helpful corrections and comments.  The first author was supported by NSF grant DMS-1611786, and the second author was supported by NSF grant DMS-1547292.

\section{Limits of presentable \texorpdfstring{$\infty$}{infinity}-categories}\label{PresentableLimitFacts} 

We collect some necessary facts about limits in the $\infty$-category $\Pr^L$ of presentable $\infty$-categories. 

Suppose $\mcb{C}_{\bullet}: J \to \Pr^L$ is a diagram and let 
$$\mcb{X} = \int \mcb{C}_{\bullet} \to J$$ 
be the presentable fibration \cite[Dfn. 5.5.3.2]{HTT} classified by $\mcb{C}_{\bullet}$. By \cite[Prop. 5.5.3.13]{HTT} and \cite[Cor. 3.3.3.2]{HTT}, we have an equivalence
\[ \mcb{C} \coloneq \lim \mcb{C}_{\bullet} \simeq \Sect(\mcb{X}) \coloneq \Fun^{\cocart}_{/J}(J, \mcb{X}) \]
between the limit $\mcb{C}$ of $\mcb{C}_{\bullet}$ and the $\infty$-category of cocartesian sections of $\mcb{X}$. Let $\mcb{D}$ be another presentable $\infty$-category and suppose that we have an extension 
$$\overline{\mcb{C}}_{\bullet} : J^{\lhd} \to \Pr^L$$ 
with the cone point sent to $\mcb{D}$. Then we have an induced adjunction $$\adjunct{F}{\mcb{D}}{\mcb{C}}{R}.$$

Let 
$$\overline{\mcb{X}} \to J^{\lhd}$$ 
be the presentable fibration classified by $\overline{\mcb{C}}_{\bullet}$. In terms of the description of $C$ as $\Sect(\mcb{X})$, we may describe $F$ and $R$ more explicitly as follows:
\begin{enumerate}
    \item The functor 
    $$F: \mcb{D} \simeq \lim \overline{\mcb{C}}_{\bullet} \to \mcb{C} \simeq \lim \mcb{C}_{\bullet}$$ 
    is given by the contravariant functoriality of limits for the inclusion $J \subset J^{\lhd}$. Thus, under the equivalences $\Sect(\overline{\mcb{X}}) \simeq \mcb{D}$ and $\Sect(\mcb{X}) \simeq \mcb{C}$, the functor $F: \mcb{D} \to \mcb{C}$ corresponds to the functor 
    $$F: \Sect(\overline{\mcb{X}}) \to \Sect(\mcb{X})$$ 
    given by restriction of cocartesian sections. In particular, an object $x \in \mcb{D}$ corresponds to the cocartesian section 
    $$\overline{\sigma}: J^{\lhd} \to \overline{\mcb{X}}$$ 
    determined up to contractible choice by $\overline{\sigma}(v) = x$ for $v$ the cone point, and then $F(x) = \overline{\sigma}|_{J}$.
    \vspace{10pt}

    \item Let 
    $$p: \mcb{X} \subset \overline{\mcb{X}} \to \overline{\mcb{X}}_{v} \simeq \mcb{D}$$ 
    be the cartesian pushforward to the fiber over the initial object $v \in J^{\lhd}$. Then for any object $\sigma \in \mcb{C}$ viewed as a cocartesian section of $\mcb{X}$ and $x \in \mcb{D}$, we have the sequence of equivalences
    \begin{align*} \Map_{\mcb{D}}(x, \lim p \sigma) & \simeq \lim \Map_{\mcb{D}}(x, p \sigma(-) ) \\
                                            & \simeq \lim \Map_{\mcb{C}_{\bullet}}(F_{\bullet} x, \sigma(-)) \\
                                            & \simeq \Map_{\mcb{C}}(Fx, \sigma),
    \end{align*}
    so there is an equivalence $R(\sigma) \simeq \lim p \sigma$.
\end{enumerate}

\section{Localization of symmetric monoidal \texorpdfstring{$\infty$}{infinity}-categories with respect to a commutative algebra}\label{sec:localization-of-symmetric-monoidal-infty-categories-with-respect-to-a-commutative-algebra}

Let $\mcb{C}$, $\mcb{D}$ be presentable stable symmetric monoidal $\infty$-categories, where we by default assume that the tensor product commutes with colimits separately in each variable.  

\subsection*{Adjunctions and limits}

We say that an adjunction 
$$\adjunct{F}{\mcb{C}}{\mcb{D}}{R}$$ 
is \emph{monoidal} if $F$ is (strong) symmetric monoidal, in which case $R$ is lax monoidal.

Given a diagram of commutative algebras in $\mcb{C}$
$$p: L \to \CAlg(\mcb{C}),$$ 
we have a canonical monoidal adjunction in $\Pr^L$
\begin{equation}\label{eq:LimitComparisonMap}
 \adjunct{\phi}{\Mod_{\mcb{C}}(\lim_L p)}{\lim_L \Mod_{\mcb{C}}(p(-))}{\psi}. 
 \end{equation}
Let $R =\lim_L p$. For $X \in \Mod_{\mcb{C}}(R)$, the unit map $\eta: X \to \psi \phi X$ may be identified with the canonical map 
$$X \to \lim_{i \in L} X \otimes_{R} p(i)$$ 
in view of the material in Section~\ref{PresentableLimitFacts}.

Moreover, for any functor $f: K \to L$, by functoriality of limits we have a commutative diagram in $\Pr^L$
\[ \begin{tikzcd}[row sep=4ex, column sep=4ex, text height=1.5ex, text depth=0.25ex]
\Mod_{\mcb{C}}(\lim_L p) \ar{r}{\phi} \ar{d} & \lim_L \Mod_{\mcb{C}}(p(-)) \ar{d} \\
\Mod_{\mcb{C}}(\lim_K p f) \ar{r}{\phi} & \lim_K \Mod_{\mcb{C}}(p f(-) ).
\end{tikzcd} \]

\subsection*{Bousfield localization}

Recall \cite[Dfn.~5.2.7.2]{HTT} that a \emph{localization} of an $\infty$-category $\mc{X}$ is an adjunction
$$ \adjunct{L}{\mc{X}}{\mc{X}_0}{R}$$
where the right adjoint $R$ is fully faithful.  The left adjoint $L$ is the localization functor.

When $\mc{X} = \mc{C}$ is our presentable stable symmetric monoidal $\infty$-category, we will be concerned with the special case of \emph{Bousfield localization} with respect to an object $E \in \mcb{C}$. We briefly recall this notion to fix terminology.

A map 
$$X \to Y$$ 
in $\mcb{C}$ is an \emph{$E$-equivalence} if 
$$E \otimes X \to E \otimes Y$$ 
is an equivalence. An object $X \in \mcb{C}$ is \emph{$E$-null} if 
$$X \otimes E \simeq 0.$$ 
An object $X \in \mcb{C}$ is \emph{$E$-local} if for every $E$-equivalence 
$$f: Y \to Z$$ 
the map 
$$f^{\ast}: \Hom_{\mcb{C}}(Z,X) \to \Hom_{\mcb{C}}(Y,X)$$ 
is an equivalence; i.e. for every $E$-null object $W$, 
$$ \Hom_{\mcb{C}}(W,X) \simeq 0. $$
Let $\mcb{C}_E \subseteq \mcb{C} $ denote the full subcategory consisting of the $E$-local objects. Then $\mcb{C}_E$ is again a presentable stable $\infty$-category and we have the localization adjunction
\[ \adjunct{L_E}{\mcb{C}}{\mcb{C}_E}{i_E}. \]
With the tensor product on $\mcb{C}_E$ defined by $L_E(- \otimes -)$, $\mcb{C}_E$ is a symmetric monoidal $\infty$-category and $L_E \dashv i_E$ is a monoidal adjunction.

\begin{exm}\label{ex:Cx} Suppose $E = C(x)$ is the cofiber of a map
$$x: I \to 1$$ 
for $1 \in \mcb{C}$ the unit. Then we also write $\mcb{C}^{\wedge}_x$ for $\mcb{C}_E$ and call this $\infty$-category the \emph{$x$-completion} of $\mcb{C}$.
\end{exm}

\subsection*{Derived completion}

If we further suppose that $E$ is a dualizable $E_{\infty}$-algebra $A \in \CAlg(\mcb{C})$, then Bousfield localization can be computed as the $A$-completion.  Specifically, we have the following.
\begin{enumerate}
    \item Let $\CB^{\bullet}(A)$ be the Amitsur complex on $A$ (\cite[Constr. 2.7]{MATHEW2017994}). By \cite[Prop. 2.21]{MATHEW2017994}, for any $X \in \mcb{C}$ we have an equivalence
    \[ L_A(X) \simeq \Tot (X \otimes \CB^{\bullet}(A)) \simeq \lim_{n \in \Delta} (X \otimes A^{\otimes n+1}).  \]
    \item By \cite[Thm. 2.30]{MATHEW2017994}\footnote{Note that even though the hypotheses of \cite[2.26]{MATHEW2017994} are otherwise in effect in that section of the paper, the proof of \cite[Thm. 2.30]{MATHEW2017994} only uses that $A \in \CAlg(\mcb{C})$ is dualizable.}, this equivalence of objects promotes to an equivalence of symmetric monoidal $\infty$-categories
    \[ \mcb{C}_A \simeq \Tot \Mod_{\mcb{C}}(\CB^{\bullet}(A)) \simeq \lim_{n \in \Delta} \Mod_{\mcb{C}}(A^{\otimes n+1}). \]
\end{enumerate}

Let $I$ denote the fiber of the unit
$$ I \xto{\iota} 1 \to A $$ 
and define 
$$ C(\iota^n) := \cofib(\iota^n: I^{\otimes n} \to 1).$$ 
Then there is an equivalence \cite[Prop. 2.14]{MATHEW2017994}
$$C(\iota^{n+1}) \simeq \Tot_{n} (\CB^{\bullet}(A)).$$
Note that because the cosimplicial object $\CB^{\bullet}(A)$ in $\mcb{C}$ canonically lifts to a cosimplicial object in $\CAlg(\mcb{C})$, the cofiber $C(\iota^n)$ obtains the structure of an $E_{\infty}$-algebra as a limit and the maps 
$$ C(\iota^{n+1}) \to C(\iota^n)$$ 
are maps of $E_{\infty}$-algebras. 

\subsection*{The completion tower}

For our dualizable $E_\infty$-algebra $A$, we wish to re-express the above descent description of $\mc{C}_A$ in terms of an inverse limit over the $\infty$-categories $\Mod_{\mcb{C}}(C(\iota^n))$.

By \ref{eq:LimitComparisonMap}, 
for all $n$ we have canonical monoidal adjunctions
\[ \adjunct{\phi_n}{\Mod_{\mcb{C}}(C(\iota^n))}{\Tot_{n-1} \Mod_{\mcb{C}}(\CB^{\bullet}(A))}{\psi_n} \]
where the left adjoints $\phi_n$ are compatible with restriction along $\Delta_{\leq n} \subset \Delta_{\leq m}$. Passage to the limit then yields the monoidal adjunction
\[ \adjunct{\phi_{\infty}}{\lim \Mod_{\mcb{C}}(C(\iota^n)) }{\Tot \Mod_{\mcb{C}}( \CB^{\bullet}(A)) } {\psi_{\infty}}, \]
where $\phi_{\infty} \{X_n \} = \{ \phi_n X_n \}$. By the universal property of the limit, and using that $C(\iota^n)$-modules are $A$-local, we also have the monoidal adjunctions 
\begin{gather*}
 \adjunct{\phi}{\mcb{C}_A}{\lim \Mod_{\mcb{C}}(C(\iota^n))}{\psi}, \\ \adjunct{\phi'}{\mcb{C}_A}{\Tot \Mod_{\mcb{C}}( \CB^{\bullet}(A)) }{\psi'}
 \end{gather*}
where the second adjunction is the adjoint equivalence of \cite[Thm. 2.30]{MATHEW2017994}. These adjunctions fit into a commutative diagram
\[ \begin{tikzcd}[row sep=4ex, column sep=4ex, text height=1.5ex, text depth=0.25ex]
\mcb{C}_A  \ar{r}[swap]{\phi} \ar[bend left=15]{rr}{\phi'} & \lim \Mod_{\mcb{C}}(C(\iota^n)) \ar{r}[swap]{\phi_{\infty}}  & \Tot \Mod_{\mcb{C}}( \CB^{\bullet}(A)).
\end{tikzcd} \]

\begin{prop} \label{prop:DoldKan} $\phi \dashv \psi$ and $\phi_{\infty} \dashv \psi_{\infty}$ are adjoint equivalences of symmetric monoidal $\infty$-categories.
\end{prop}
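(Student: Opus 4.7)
The plan is to leverage the commutative triangle $\phi' = \phi_\infty \circ \phi$ together with \cite[Thm.~2.30]{MATHEW2017994}, which already gives that $\phi'$ is an adjoint equivalence. By the 2-out-of-3 property for equivalences, it therefore suffices to prove that \emph{one} of $\phi$ or $\phi_\infty$ is an equivalence; the other then follows automatically. All three adjunctions are monoidal, and a monoidal adjunction whose underlying functor is an equivalence is automatically an equivalence of symmetric monoidal $\infty$-categories, so the monoidal structure takes care of itself. I will show $\phi_\infty$ is an equivalence by separately checking fully faithfulness and essential surjectivity.

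\textbf{Fully faithfulness.} Since mapping spaces in a limit of $\infty$-categories are computed termwise, it is enough to show $\phi_n$ is fully faithful for every finite $n$. Using the description from Section~\ref{PresentableLimitFacts} applied to the diagram $p : \Delta_{\leq n-1} \to \CAlg(\mcb{C})$ given by $p(k) = A^{\otimes k+1}$, the right adjoint $\psi_n$ sends a cocartesian section $\{N^k\}$ to $\lim_k N^k$, with each $N^k$ regarded as a $C(\iota^n)$-module by restriction along the canonical map $C(\iota^n) \to A^{\otimes k+1}$. Since $\phi_n(M) = \{M \otimes_{C(\iota^n)} A^{\otimes k+1}\}$, the unit of $\phi_n \dashv \psi_n$ at $M$ evaluates to
\[ M \longrightarrow \lim_{k \leq n-1}\bigl( M \otimes_{C(\iota^n)} A^{\otimes k+1} \bigr). \]
Because $\Mod_{\mcb{C}}(C(\iota^n))$ is a presentable stable symmetric monoidal $\infty$-category, $M \otimes_{C(\iota^n)} -$ preserves all small colimits and therefore also preserves the finite limit $\Tot_{n-1}$. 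Commuting $M \otimes_{C(\iota^n)} -$ past the totalization and invoking the hypothesis $\Tot_{n-1}(\CB^{\bullet}(A)) \simeq C(\iota^n)$ identifies the right-hand side with $M \otimes_{C(\iota^n)} C(\iota^n) \simeq M$, so the unit is an equivalence. Passing to the limit in $n$ yields fully faithfulness of $\phi_\infty$.

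\textbf{Essential surjectivity and conclusion.} Given any $Y \in \Tot \Mod_{\mcb{C}}(\CB^{\bullet}(A))$, essential surjectivity of $\phi' = \phi_\infty \circ \phi$ yields an $X \in \mcb{C}_A$ with $Y \simeq \phi'(X) = \phi_\infty(\phi(X))$, exhibiting $Y$ in the essential image of $\phi_\infty$. Hence $\phi_\infty$ is an equivalence, and by 2-out-of-3 so is $\phi$. The main technical obstacle is the fully faithfulness computation: it requires correctly identifying $\psi_n$ via the cartesian-pushforward-to-the-initial-fiber formula of Section~\ref{PresentableLimitFacts}, and using stability of $\Mod_{\mcb{C}}(C(\iota^n))$ to promote the colimit-preservation of $M \otimes_{C(\iota^n)} -$ to finite-limit preservation so that it can be commuted past the finite totalization.
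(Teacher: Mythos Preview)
Your proof is correct and close in spirit to the paper's, but the two arguments run the 2-out-of-3 in opposite directions. The paper proves $\phi$ is an equivalence directly, by showing the unit $X \to \psi\phi X \simeq \lim_n X \otimes C(\iota^n)$ is an equivalence for $A$-local $X$, and then checking that $\psi$ is conservative; it then deduces $\phi_\infty$ from $\phi' \simeq \phi_\infty \circ \phi$. The conservativity step is the nontrivial one: it uses full faithfulness of each $\phi_n$ (proved exactly as you do, via $M \otimes_{C(\iota^n)} -$ commuting with the finite limit $\Tot_{n-1}$) together with the identification $\psi'(\phi_\infty\{X_n\}) \simeq \lim_n X_n$ to conclude that $\lim_n X_n \simeq 0$ forces each $X_n \simeq 0$.

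You instead prove $\phi_\infty$ is an equivalence and deduce $\phi$. Your essential surjectivity is immediate from $\phi'$, and your full faithfulness comes from the general fact that a limit of fully faithful functors between compatible towers is fully faithful (mapping spaces in limits being termwise). This sidesteps the conservativity argument entirely, which makes your route a bit more streamlined. Both proofs share the same core computation, namely that each $\phi_n$ is fully faithful; the difference is only in how that lemma is packaged into the final conclusion.
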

\begin{proof} It suffices to prove the first statement. We need to show that
\begin{enumerate} \item The unit $\id \to \psi \phi$ is an equivalence.
\item $\psi$ is conservative.
\end{enumerate}

For (1), given any $A$-local object $X$ the unit map
 \[ X \to \psi \phi X \simeq \lim X \otimes C(\iota^n) \simeq \Tot X \otimes \CB^{\bullet}(A) \]
is already known to be an equivalence. For (2), we first note that $\phi_n$ is fully faithful, i.e., the unit map $\id \to \psi_n \phi_n$ is an equivalence. Indeed, for \emph{any} finite $\infty$-category $K$ and functor $p: K \to \CAlg(\mcb{C})$, if $R = \lim_K p$ and $X \in \Mod_C(R)$, then there is an equivalence 
$$ X \simeq \lim_{K} X \otimes_R p(-).$$ 
Now suppose that $\{ X_n \}$ is a object in $\lim \Mod_{\mcb{C}}(C(\iota^n))$ such that 
$$ \psi \{X_n \} = \lim X_n \simeq 0. $$
Note that since $\phi_{\infty} \{X_n \} = \{ \phi_n X_n \}$, for the cosimplicial object $\phi_{\bullet} X_{\bullet}$ we have that $\Tot_n (\phi_{\bullet} X_{\bullet}) \simeq \psi_n \phi_n X_n \simeq X_n$, so
\[ \psi'(\phi_{\infty} \{X_n \}) = \Tot \phi_{\bullet} X_{\bullet} \simeq \lim_n \Tot_n (\phi_{\bullet} X_{\bullet}) \simeq \lim_n X_n \simeq 0.    \]
Therefore, because $\psi'$ is an equivalence, $\phi_{\infty} \{X_n \} \simeq 0$. This means that for all $n$, $\phi_n X_n \simeq 0$, so $X_n \simeq \psi_n \phi_n X_n \simeq 0$ and $\{X_n\} \simeq 0$.
\end{proof}

\begin{rmk} We have a commutative diagram of right adjoints
\[ \begin{tikzcd}[row sep=4ex, column sep=4ex, text height=1.5ex, text depth=0.25ex]
\lim \Mod_{\mcb{C}}(C(\iota^n)) \ar{d} & \Tot \Mod_\cC( \CB^{\bullet}(A)) \ar{d} \ar{l}{\psi_{\infty}} \\
\Fun(\mb{Z}_{\ge 0}^{\op},\mcb{C}) & \Fun(\Delta, \mcb{C}) \ar{l}{\DK}.
\end{tikzcd} \]
where $\DK$ is the functor that sends a cosimplicial object to its tower of partial totalizations. $\DK$ implements the equivalence of the $\infty$-categorical Dold-Kan correspondence (\cite[Thm. 1.2.4.1]{HA}). We may thus interpret Prop. \ref{prop:DoldKan} as a monoidal refinement of the Dold-Kan correspondence, with $\phi_{\infty}$ providing an explicit inverse.
\end{rmk}

We also record a useful corollary of the proof of Prop.~\ref{prop:DoldKan}. This result is a companion to the fact that 
$$- \otimes A: \mcb{C}_A \to \Mod_{\mcb{C}}(A)$$ 
is conservative.

\begin{lem} \label{lem:BaseChangeConservative} For every $n$, the base-change functor 
$$\Mod_{\mcb{C}}(C(\iota^n)) \to \Mod_{\mcb{C}}(C(\iota)) = \Mod_{\mcb{C}}(A)$$ 
is conservative. 
\end{lem}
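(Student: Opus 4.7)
The plan is to factor the base-change functor through the fully faithful embedding $\phi_n$ established in the proof of Proposition~\ref{prop:DoldKan} and to invoke conservativity of evaluation at $[0] \in \Delta_{\leq n-1}$ on cocartesian sections.

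First, I would recall from the proof of Proposition~\ref{prop:DoldKan} that the left adjoint
$$\phi_n: \Mod_{\mcb{C}}(C(\iota^n)) \to \Tot_{n-1} \Mod_{\mcb{C}}(\CB^{\bullet}(A))$$
is fully faithful, and hence conservative. Under the identification $C(\iota^n) \simeq \Tot_{n-1} \CB^{\bullet}(A) = \lim_{[k] \in \Delta_{\leq n-1}} A^{\otimes(k+1)}$ from \cite[Prop.~2.14]{MATHEW2017994}, the explicit description of $\phi$ recalled in Section~\ref{PresentableLimitFacts} identifies $\phi_n(M)$ with the cocartesian section of the presentable fibration whose value at $[k]$ is the base change $M \otimes_{C(\iota^n)} A^{\otimes(k+1)}$. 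In particular, evaluating at $[0]$ yields $M \otimes_{C(\iota^n)} A$, so the base-change functor factors as
$$ \Mod_{\mcb{C}}(C(\iota^n)) \xto{\phi_n} \Tot_{n-1} \Mod_{\mcb{C}}(\CB^{\bullet}(A)) \xto{\ev_{[0]}} \Mod_{\mcb{C}}(A). $$

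Next, I would verify that $\ev_{[0]}$ is itself conservative. For each $[k] \in \Delta_{\leq n-1}$ there is a coface map $[0] \to [k]$ in $\Delta_{\leq n-1}$, and the cocartesian condition forces $\sigma([k]) \simeq \sigma([0]) \otimes_A A^{\otimes(k+1)}$ along the induced algebra map $A \to A^{\otimes(k+1)}$. Consequently, if $\sigma([0]) \simeq 0$ then $\sigma([k]) \simeq 0$ for every $[k]$, and $\sigma$ is the zero section. Thus $\ev_{[0]} \circ \phi_n$, being a composition of conservative functors, is conservative, which proves the lemma.

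The main obstacle I anticipate is unwinding the definition of $\phi_n$: one must carefully trace through the explicit description of $\phi$ from Section~\ref{PresentableLimitFacts} to confirm that $\phi_n(M)$ takes $[k]$ to $M \otimes_{C(\iota^n)} A^{\otimes(k+1)}$, and therefore that $\ev_{[0]} \circ \phi_n \simeq (-) \otimes_{C(\iota^n)} A$. Once this identification is in place, the conservativity argument above is purely formal.
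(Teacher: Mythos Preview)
Your proposal is correct and follows essentially the same approach as the paper: factor the base-change functor as $\ev_{[0]} \circ \phi_n$, use that $\phi_n$ is fully faithful (from the proof of Proposition~\ref{prop:DoldKan}), and observe that restriction to $[0]$ is conservative on cocartesian sections. The paper's proof is simply a terser version of yours, asserting without further justification that the restriction functor $\Tot_{n-1} \Mod_{\mcb{C}}(\CB^{\bullet}(A)) \to \Mod_{\mcb{C}}(A)$ is ``clearly conservative.''
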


\begin{proof} We showed that the functor $\phi_n$ is fully faithful, and the restriction functor
\[ \Tot_{n-1} \Mod_{\mcb{C}}(\CB^{\bullet}(A)) \to \Mod_{\mcb{C}}(A) \]
is clearly conservative.
\end{proof}

\subsection*{The monoidal Barr-Beck theorem}

Throughout, let $$\adjunct{F}{\mcb{C}}{\mcb{D}}{R}$$ 
be a monoidal adjunction 
between our presentable symmetric monoidal stable $\infty$-categories $\mcb{C}$ and $\mcb{D}$.
We recall the \emph{monoidal Barr-Beck theorem} of \cite{MATHEW2017994}, which will be a key technical device to many of the formal results of this paper.

\begin{thm}[{\cite[Thm. 5.29]{MATHEW2017994}}] \label{thm:monoidalBarrBeck} Suppose that $F \dashv R$ satisfies the following conditions:
\begin{enumerate}
    \item $R$ is conservative.
    \item $R$ preserves colimits.
    \item $(F,R)$ satisfies the projection formula: the natural map
    \[ R(X) \otimes Y \to R(X \otimes F(Y)) \]
    is an equivalence for all $X \in \mcb{D}$, $Y \in \mcb{C}$.
\end{enumerate}
Then there is an equivalence
$$\mcb{D} \simeq \Mod_{\mcb{C}}(R(1_{\mcb{D}}))$$ 
and $F \dashv R$ is equivalent to the free-forgetful adjunction. 
\end{thm}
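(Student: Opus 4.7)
The plan is to deduce this result from the $\infty$-categorical Barr--Beck--Lurie monadicity theorem, with the monoidal structure being imposed by the projection formula. First I would observe that hypotheses (1) and (2), namely that $R$ is conservative and preserves colimits (in particular geometric realizations of $R$-split simplicial objects), put us exactly in the situation where Lurie's monadicity theorem (\cite[Thm.~4.7.3.5]{HA}) applies. This yields an equivalence $\mcb{D} \simeq \Mod_T(\mcb{C})$ over $\mcb{C}$, where $T = R \circ F$ is the endomorphism monad on $\mcb{C}$ and the free-forgetful adjunction for $T$ corresponds to $F \dashv R$.

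The next step is to identify the monad $T$ with the monad $A \otimes (-)$ associated to the $E_\infty$-algebra $A := R(1_{\mcb{D}}) \in \CAlg(\mcb{C})$ obtained by applying the lax symmetric monoidal right adjoint $R$ to the unit of $\mcb{D}$. For any $Y \in \mcb{C}$, the projection formula hypothesis (3) provides a natural equivalence
\[ T(Y) = R(F(Y)) \simeq R(1_{\mcb{D}} \otimes F(Y)) \simeq R(1_{\mcb{D}}) \otimes Y = A \otimes Y. \]
Thus the underlying endofunctors agree, and $\Mod_T(\mcb{C}) \simeq \Mod_{\mcb{C}}(A)$ follows once this identification is promoted to an equivalence of monads (equivalently, of algebras in the monoidal $\infty$-category of endofunctors of $\mcb{C}$).

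The main obstacle is precisely this promotion from an equivalence of endofunctors to an equivalence of monad structures, together with its compatibility with the associated free-forgetful adjunctions. To handle this cleanly one works in the setting of $\infty$-operads: the monoidal adjunction $F \dashv R$ induces a map of $\infty$-operads which, combined with the $E_\infty$-algebra structure on $A$, gives the monad $A \otimes (-)$ the structure of an algebra in endofunctors, and the projection formula equivalence is naturally one of such algebras. Making this rigorous is the technical heart of the argument in \cite{MATHEW2017994}; I would quote that step and focus the verification on the instance at hand, namely that under the equivalence $T(Y) \simeq A \otimes Y$, the multiplication $TT \Rightarrow T$ (which is $R\epsilon F$ for $\epsilon$ the counit of $F \dashv R$) corresponds to $\mu_A \otimes \id$ and the unit $\id \Rightarrow T$ (which is the unit of $F \dashv R$) corresponds to the unit of $A$. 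Both compatibilities are straightforward diagram chases using naturality of the projection formula and the fact that $R$ is lax monoidal.

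Finally, having identified the monad, the equivalence $\mcb{D} \simeq \Mod_{\mcb{C}}(A)$ intertwines the monadic adjunction with the free-forgetful adjunction, which gives the stated identification of $F \dashv R$ with the extension-of-scalars adjunction along $1_{\mcb{C}} \to A$.
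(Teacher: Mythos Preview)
Your sketch is correct and follows the standard route: apply Barr--Beck--Lurie monadicity using hypotheses (1) and (2), then use the projection formula (3) to identify the monad $RF$ with $R(1_{\mcb{D}}) \otimes (-)$, with the coherence of monad structures being the one nontrivial point. Note, however, that the paper does not give its own proof of this theorem at all: it is quoted verbatim as \cite[Thm.~5.29]{MATHEW2017994} and used as a black box, so there is nothing in the paper to compare against beyond the citation. Your outline is essentially the argument carried out in that reference.
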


We may descend Thm. \ref{thm:monoidalBarrBeck} to subcategories of local objects.

\begin{lem}\label{lem:PassingLocalizationToModuleCat} Let $\mcb{C}$ and $\mcb{D}$ be presentable symmetric monoidal stable $\infty$-categories, let $$\adjunct{F}{\mcb{C}}{\mcb{D}}{R}$$ 
be a monoidal adjunction, let $E \in \mcb{C}$ be any object and let $E' = F(E)$. Then the adjunction $F \dashv R$ induces a monoidal adjunction $$\adjunct{F'}{\mcb{C}_E}{\mcb{D}_{E'}}{R'}$$ 
between the $\infty$-categories of $E$-local and $E'$-local objects. Moreover:
\begin{enumerate} \item If $R$ is conservative, then $R'$ is conservative. 
 \item Suppose that $(F,R)$ satisfies the projection formula. Then there is an equivalence 
 $$L_E R \simeq R' L_{E'}$$ 
 and $(F',R')$ satisfies the projection formula. Moreover, if $R$ in addition preserves colimits, then $R'$ preserves colimits.
\end{enumerate}
Therefore, we have
$$\Mod_{\mcb{C}}(R(1_{\mcb{D}}))_{E'} \simeq \Mod_{\mcb{C}_E}(L_E R(1_{\mcb{D}}))$$ 
and $F' \dashv R'$ is the free-forgetful adjunction.
\end{lem}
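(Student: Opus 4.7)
The plan is to construct the induced adjunction $(F', R')$ explicitly, verify that it inherits the three hypotheses of the monoidal Barr-Beck theorem (Thm.~\ref{thm:monoidalBarrBeck}) from $(F, R)$, and then combine with the original Barr-Beck equivalence $\mcb{D} \simeq \Mod_{\mcb{C}}(R(1_{\mcb{D}}))$ to deduce the final module category identification.

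First I would observe that $R$ carries $E'$-local objects to $E$-local objects: if $W \in \mcb{C}$ is $E$-null and $Y \in \mcb{D}_{E'}$, then $F(W) \otimes E' \simeq F(W \otimes E) \simeq 0$, so $F(W)$ is $E'$-null and $\Hom_{\mcb{C}}(W, R(Y)) \simeq \Hom_{\mcb{D}}(F(W), Y) \simeq 0$. This lets me define $R' \coloneq R|_{\mcb{D}_{E'}}$ and $F' \coloneq L_{E'} \circ F \circ i_E$; the adjunction $F' \dashv R'$ follows from $F \dashv R$ and the universal property of $L_{E'}$, and $F'$ inherits a symmetric monoidal structure as the composite of $F$ with the monoidal localization $L_{E'}$. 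Conservativity of $R'$ when $R$ is conservative (claim (1)) is then immediate, since $R'$ is the restriction of $R$ to a full subcategory.

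The heart of the argument is claim (2), for which the key step to establish is the compatibility $L_E R \simeq R' L_{E'}$. I would deduce this from the fact that $R$ sends $E'$-equivalences to $E$-equivalences, and this in turn from the projection formula: for any map $f: Y \to Y'$ in $\mcb{D}$, one has a natural equivalence $R(f) \otimes E \simeq R(f \otimes F(E)) = R(f \otimes E')$, so $f$ being an $E'$-equivalence forces $R(f)$ to be an $E$-equivalence. Applying this to the localization map $Y \to L_{E'} Y$, and using that $R L_{E'} Y$ is already $E$-local, yields $L_E R Y \simeq L_E R L_{E'} Y \simeq R L_{E'} Y = R' L_{E'} Y$. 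The projection formula for $(F', R')$ then follows by a direct calculation: $R'(X) \otimes_{\mcb{C}_E} Y$ equals $L_E(R(X) \otimes Y)$, which by the original projection formula equals $L_E R(X \otimes F(Y))$, and by the compatibility just established equals $R' L_{E'}(X \otimes F(Y)) \simeq R'(X \otimes_{\mcb{D}_{E'}} F'(Y))$. Colimit preservation of $R'$ (assuming $R$ preserves colimits) falls out formally from $L_E R \simeq R' L_{E'}$, using that colimits in $\mcb{C}_E$ and $\mcb{D}_{E'}$ are computed by applying $L_E$ and $L_{E'}$ to ambient colimits and that these localization functors are left adjoints.

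With all three hypotheses verified, Thm.~\ref{thm:monoidalBarrBeck} applied to $(F', R')$ gives $\mcb{D}_{E'} \simeq \Mod_{\mcb{C}_E}(R'(1_{\mcb{D}_{E'}}))$, and since $R'(1_{\mcb{D}_{E'}}) = R(L_{E'} 1_{\mcb{D}}) \simeq L_E R(1_{\mcb{D}})$ by the compatibility above, this yields the right-hand side of the claimed equivalence. On the other hand, the original Barr-Beck equivalence $\mcb{D} \simeq \Mod_{\mcb{C}}(R(1_{\mcb{D}}))$ identifies $\mcb{D}_{E'}$ with $\Mod_{\mcb{C}}(R(1_{\mcb{D}}))_{E'}$, completing the identification. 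The main obstacle is the derivation of the compatibility $L_E R \simeq R' L_{E'}$ from the projection formula; once this is secured, every remaining step is a formal consequence.
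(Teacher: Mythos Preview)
Your proposal is correct and follows essentially the same approach as the paper: construct $R'$ as the restriction of $R$ and $F' = L_{E'} \circ F$, derive the key compatibility $L_E R \simeq R' L_{E'}$ from the projection formula (via the observation that $R$ sends $E'$-equivalences to $E$-equivalences), and then verify the remaining Barr--Beck hypotheses for $(F',R')$ by the same direct calculations. The only minor addition is that you make explicit the invocation of the original Barr--Beck equivalence $\mcb{D} \simeq \Mod_{\mcb{C}}(R(1_{\mcb{D}}))$ to identify the left-hand side of the final displayed equivalence, which the paper leaves implicit.
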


\begin{proof} Because the functor $F$ is strong monoidal, $F$ sends $E$-equivalences to $E'$-equivalences. Therefore, if $X$ is $E'$-local, then $R(X)$ is $E$-local, so we may define 
$$R': \mcb{D}_{E'} \to \mcb{C}_E$$ 
to be the restriction of $R$. We may then define 
$$F': \mcb{C}_E \to \mcb{D}_{E'}$$ 
by $F' \coloneq L_{E'} F$ to obtain the induced monoidal adjunction $$\adjunct{F'}{\mcb{C}_E}{\mcb{D}_{E'}}{R'}.$$

For (1), if $R$ is conservative, then because $i_E R' = R i_{E'}$, $R'$ is conservative. For (2), if $(F,R)$ satisfies the projection formula, then we have that for any $E'$-null object $X$,
\begin{align*} R(X) \otimes E \simeq R(X \otimes E') \simeq 0,
\end{align*}
so $R$ sends $E'$-equivalences to $E$-equivalences. Therefore, we have $L_E R \simeq R' L_{E'}$. To see that $(F',R')$ satisfies the projection formula, we use the sequence of equivalences
\begin{align*} R'(X \otimes_{\mcb{D}_{E'}} F'(Y)) & \simeq  R'(L_{E'} (i_{E'} X \otimes_{\mcb{D}} F(Y) )) \\
& \simeq L_E R (i_{E'} X \otimes_{\mcb{D}} F(Y) )) \\
& \simeq L_E (R(i_{E'} X) \otimes_{\mcb{C}} Y) \\
& \simeq R' X \otimes_{\mcb{C}_E} Y.
\end{align*}
Now suppose that $R$ preserves colimits. To see that $R'$ preserves colimits, suppose given a diagram $X_{\bullet}: J \to \mcb{D}_{E'}$. Then we have
$$\colim X_{\bullet} \simeq L_{E'} \colim i_{E'} X_{\bullet},$$ 
and we have the sequence of equivalences
\begin{align*} R'(\colim X_{\bullet}) & \simeq R' L_{E'} \colim i_{E'} X_{\bullet} \\
& \simeq L_E R \colim i_{E'} X_{\bullet} \\
& \simeq \colim L_E R i_{E'} X_{\bullet} \\
& \simeq \colim R' L_{E'} i_{E'} X_{\bullet} \\
& \simeq \colim R'  X_{\bullet}.
\end{align*}
Finally, the last statement is a consequence of Thm.~\ref{thm:monoidalBarrBeck}.
\end{proof}

\begin{exm} In Lem.~\ref{lem:PassingLocalizationToModuleCat}, let $E = C(x)$ for $x$ as in Example~\ref{ex:Cx}. Then we see that $$\Mod_{\mcb{C}}(R(1_{\mcb{D}}))^{\wedge}_x \simeq \Mod_{\mcb{C}^{\wedge}_x}(R(1_{\mcb{D}})^{\wedge}_x).$$
\end{exm}

We also note a similar result when passing to module categories.

\begin{lem} \label{lem:ModuleProjectionFormula} 
Let $A \in \CAlg(\mcb{C})$ and $A' = F(A)$, and let $$\adjunct{F'}{\Mod_{\mcb{C}}(A)}{\Mod_{\mcb{D}}(A')}{R'}$$ 
denote the induced monoidal adjunction. Then
\begin{enumerate} \item If $R$ is conservative, then $R'$ is conservative.
\item If $R$ preserves colimits, then $R'$ preserves colimits.
\item If $R$ preserves colimits and $(F,R)$ satisfies the projection formula, then $(F',R')$ satisfies the projection formula.
\end{enumerate}
\end{lem}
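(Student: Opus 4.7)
The plan is to deduce all three statements from the corresponding properties of $F \dashv R$ by leveraging the forgetful functors $U_A \colon \Mod_{\mcb{C}}(A) \to \mcb{C}$ and $U_{A'} \colon \Mod_{\mcb{D}}(A') \to \mcb{D}$. Two facts about these forgetful functors drive the argument: they are conservative and preserve small colimits (using that $\mcb{C}$, $\mcb{D}$ are presentable stable and $U_A$, $U_{A'}$ are monadic), and the definition of $R'$ on underlying objects gives a canonical equivalence $U_A R' \simeq R U_{A'}$.

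Part (1) is then immediate: if $R$ is conservative, then $U_A R' \simeq R U_{A'}$ is conservative, and since $U_A$ is conservative this forces $R'$ to be conservative. Part (2) follows similarly: since $U_A$ is monadic, it detects small colimits, so for a diagram $X_\bullet$ in $\Mod_{\mcb{D}}(A')$ it suffices to check that $U_A$ applied to the canonical comparison $\colim R' X_\bullet \to R'(\colim X_\bullet)$ is an equivalence; but after applying $U_A$ this becomes $\colim R U_{A'} X_\bullet \to R(\colim U_{A'} X_\bullet)$, which is an equivalence by the hypothesis that $R$ preserves colimits together with the fact that $U_{A'}$ does as well.

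For part (3), I would express both sides of the desired projection formula as realizations of two-sided bar constructions in their respective module categories:
\begin{align*}
R'(X) \otimes_A Y &\simeq \bigl| R(X) \otimes A^{\otimes \bullet} \otimes Y \bigr|, \\
R'\bigl(X \otimes_{A'} F'(Y)\bigr) &\simeq \bigl| R\bigl(X \otimes A'^{\otimes \bullet} \otimes F(Y)\bigr) \bigr|,
\end{align*}
where the first equivalence uses the underlying-object description of the relative tensor product, and the second uses part (2) to commute $R'$ past the geometric realization. The strong monoidality of $F$ identifies $F(A)^{\otimes n}$ with $A'^{\otimes n}$, and then the projection formula for $(F,R)$ applied at each simplicial level yields a natural equivalence
$$ R(X) \otimes A^{\otimes n} \otimes Y \xto{\simeq} R\bigl(X \otimes A'^{\otimes n} \otimes F(Y)\bigr), $$
which assembles along the simplicial structure to an equivalence between the two realizations.

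The main obstacle is bookkeeping: one must verify that the assembled equivalence between bar realizations is precisely the natural comparison map $R'(X) \otimes_A Y \to R'(X \otimes_{A'} F'(Y))$ built from the unit and counit of $F' \dashv R'$ on module categories. This is a naturality check that uses coherence for the monoidal structure of $F$ and the lax monoidal structure of $R$; although standard, it is the only part of the argument that requires genuine care.
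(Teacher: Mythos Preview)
Your proposal is correct and matches the paper's approach essentially verbatim: the paper dispatches (1) and (2) by the remark that $F'$ and $R'$ are computed by $F$ and $R$ after forgetting the module structure, and proves (3) by identifying the comparison map with the geometric realization of the levelwise projection-formula equivalences on the bar resolutions. If anything, you are more careful than the paper in flagging the coherence check that the assembled simplicial equivalence agrees with the canonical comparison map.
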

\begin{proof} Because $F'$ and $R'$ are computed by $F$ and $R$ after forgetting the module structure, the first two results are clear. For the projection formula, under our assumptions the natural map
\[ R M \otimes_A N \to R(M \otimes_{A'} F N) \]
is equivalent to the geometric realization of the map of simplicial diagrams
\[ R M \otimes A^{\otimes \bullet} \otimes N \to R(M \otimes (A')^{\otimes \bullet} \otimes F N), \]
which is an equivalence in view of the projection formula for $(F,R)$.
\end{proof}

\subsection*{Lifting localizations}

For $A \in \CAlg(\mcb{C})$ dualizable and 
$$C(\iota^{n+1}) = \Tot_n \CB^{\bullet}(A)$$ 
as before, let $A' = F(A)$ and $j = F(\iota)$, so that $F(C(\iota^{n})) \simeq C({j}^n)$. We have induced monoidal adjunctions
\begin{gather*}
\adjunct{F_n}{\Mod_{\mcb{C}}(C(\iota^n))}{\Mod_{\mcb{D}}(C({j}^n))}{R_n}, \\
\adjunct{F_{\infty}}{\mcb{C}_A}{\mcb{D}_{A'}}{R_{\infty}}.
\end{gather*}

We end this section with a result that allows us to lift the property of $R_1$ being fully faithful to $R_n$ and $R_{\infty}$. 

\begin{prop} \label{prop:FullyFaithfulLift} Suppose that $R$ preserves colimits, $(F,R)$ satisfies the projection formula, and $R_1$ is fully faithful. Then $R_n$ is fully faithful for all $1 \leq n \leq \infty$.
\end{prop}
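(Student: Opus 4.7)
The plan is to induct on $n$, with the base case $n = 1$ being the hypothesis, and then to pass to the limit to handle $n = \infty$. The key geometric input is a cofiber sequence that expresses $C(\iota^{n+1})$ as an extension of $C(\iota^n)$ by an $A$-module, obtained from the factorization of $\iota^{n+1}$.

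For the inductive step, factor $\iota^{n+1}$ as $I^{\otimes(n+1)} \xrightarrow{\iota \otimes \id} I^{\otimes n} \xrightarrow{\iota^n} 1$. The octahedral axiom produces a cofiber sequence
$$ A \otimes I^{\otimes n} \to C(\iota^{n+1}) \to C(\iota^n) $$
in $\mcb{C}$, whose rightward map is the canonical map of $E_\infty$-algebras. Since this rightward map is $C(\iota^{n+1})$-linear, the sequence lifts to a cofiber sequence of $C(\iota^{n+1})$-modules in which the leftmost term has action factoring through $A$. Applying the strong monoidal $F$ yields the analogous cofiber sequence of $C(j^{n+1})$-modules, with leftmost term in $\Mod_{\mcb{D}}(A')$. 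For $X \in \Mod_{\mcb{D}}(C(j^{n+1}))$, tensoring this cofiber sequence over $C(j^{n+1})$ with $X$ produces a cofiber sequence
$$ X_L \to X \to X_R $$
in $\Mod_{\mcb{D}}(C(j^{n+1}))$, where $X_L \in \Mod_{\mcb{D}}(A')$ and $X_R \in \Mod_{\mcb{D}}(C(j^n))$. By Lem.~\ref{lem:ModuleProjectionFormula}, $R_{n+1}$ preserves colimits, so $F_{n+1} R_{n+1}$ preserves cofiber sequences, and it suffices to show the counit $\varepsilon : F_{n+1} R_{n+1} \to \id$ is an equivalence on $X_L$ and $X_R$. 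Because $F_k Y \simeq F(Y)$ and $R_k Y \simeq R(Y)$ underlyingly for every $k$, the counit $\varepsilon$ at any module-category level agrees underlyingly with the counit of $F \dashv R$. Hence $\varepsilon_{X_L}$ coincides with the counit of $F_1 \dashv R_1$ at $X_L$, an equivalence by the base case, while $\varepsilon_{X_R}$ coincides with the counit of $F_n \dashv R_n$ at $X_R$, an equivalence by the inductive hypothesis. Since equivalences in $\Mod_{\mcb{D}}(C(j^{n+1}))$ are detected underlyingly, $\varepsilon_X$ is an equivalence.

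For $n = \infty$, Prop.~\ref{prop:DoldKan} identifies $\mcb{C}_A \simeq \lim_n \Mod_{\mcb{C}}(C(\iota^n))$ and $\mcb{D}_{A'} \simeq \lim_n \Mod_{\mcb{D}}(C(j^n))$ as symmetric monoidal $\infty$-categories, and under these equivalences $F_\infty \dashv R_\infty$ is the limit of the adjunctions $F_n \dashv R_n$. Since fully faithfulness of a right adjoint is equivalent to the counit being an equivalence, a condition preserved under limits of adjunctions, $R_\infty$ is fully faithful. The main subtlety in the proof is verifying that the cofiber sequence lifts to modules in such a way that $X_L$ and $X_R$ genuinely live in $\Mod_{\mcb{D}}(A')$ and $\Mod_{\mcb{D}}(C(j^n))$ respectively, so that the inductive hypotheses apply; this reduces to the uniqueness of the fiber of an algebra map, allowing one to match the canonical module structure against the natural one arising from the factorization.
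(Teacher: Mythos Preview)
Your inductive approach is different from the paper's and, while plausible, leaves the very point you flag unresolved. The paper's proof avoids induction entirely: for finite $n$, it observes that the base-change functor $- \otimes_{C(j^n)} A' : \Mod_{\mcb{D}}(C(j^n)) \to \Mod_{\mcb{D}}(A')$ is conservative (Lem.~\ref{lem:BaseChangeConservative}), so it suffices to check that $\epsilon^n_X \otimes_{C(j^n)} A'$ is an equivalence. The projection formula for $(F_n, R_n)$ (Lem.~\ref{lem:ModuleProjectionFormula}) identifies this map with $\epsilon^n_{X \otimes_{C(j^n)} A'}$, and since $X \otimes_{C(j^n)} A'$ is \emph{manifestly} an $A'$-module, this counit agrees underlyingly with the counit of $F_1 \dashv R_1$, an equivalence by hypothesis. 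The case $n = \infty$ is handled identically, using that $- \otimes A' : \mcb{D}_{A'} \to \Mod_{\mcb{D}}(A')$ is conservative together with the projection formula for $(F_\infty, R_\infty)$ from Lem.~\ref{lem:PassingLocalizationToModuleCat}.

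The subtlety you identify is genuine and not dispatched by ``uniqueness of the fiber of an algebra map'': that uniqueness pins down the underlying object $A' \otimes J^{\otimes n}$, but what you need is that its $C(j^{n+1})$-module structure, as the fiber of the algebra map $C(j^{n+1}) \to C(j^n)$, coincides with the one restricted along $C(j^{n+1}) \to A'$ from the evident $A'$-module structure. This is the $\infty$-categorical analogue of the classical fact that $I^n/I^{n+1}$ is an $R/I$-module, and it can be established, but it requires more than you have written. The paper's argument sidesteps the issue: rather than exhibiting a piece of $X$ that \emph{happens} to be an $A'$-module, it \emph{produces} one by base change and uses conservativity instead of a cofiber decomposition to descend. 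This also makes $n = \infty$ a one-line repetition of the finite case rather than a separate limit argument with its attendant Beck--Chevalley verifications.
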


\begin{proof} First suppose $n < \infty$ and let $X \in \Mod_{\mcb{D}}(C({j}^n))$. We need to prove that the counit $\epsilon^n_X$ is an equivalence. Because the base-change functor $- \otimes_{C {j}^n} A'$ is conservative by Lem.~\ref{lem:BaseChangeConservative}, it suffices to show that $\epsilon^n_X \otimes_{C{j}^n} A'$ is an equivalence. But by the projection formula for $(F_n, R_n)$ established in Lem.~\ref{lem:ModuleProjectionFormula}, this map is equivalent to the counit $\epsilon^n_{(X \otimes_{C {j}^n} A')}$. Because $X \otimes_{C {j}^n} A'$ is an $A'$-module, $\epsilon^n_{(X \otimes_{C {j}^n} A')}$ is lifted by the counit $\epsilon^1_{(X \otimes_{C {j}^n} A')}$, which is an equivalence by assumption. The proof for the case $n = \infty$ is similar, where we instead use that 
$$- \otimes A': \mcb{D}_{A'} \to \Mod_{\mcb{D}}(A')$$ 
is conservative and the projection formula for $(F_{\infty}, R_{\infty})$ by Lem.~\ref{lem:PassingLocalizationToModuleCat}. 
\end{proof}

\section{Cellularization}\label{sec:cellularization}

In this section, we collect a few technical lemmas that will be applied to study the $\infty$-category $\SH_{\cell}(S)$ of cellular motivic spectra. To begin with, we have the following variant of \cite[Prop. 2.27]{MATHEW2017994} (with the same conclusion), where we do not assume that $E$ is an algebra object of $\cC$.

\begin{lem} \label{lm:CompactGenerationAndBousfieldLocalization} 
Suppose $\cC$ is a presentable symmetric monoidal stable $\infty$-category and $E$ is a dualizable object in $\cC$.
\begin{enumerate} 
\item  For any object $X \in \cC$, 
$E^{\vee} \otimes X$ 
is $E$-local. If $E^{\vee} \simeq E \otimes \kappa$, then 
$E \otimes X$ 
is also $E$-local.
\vspace{10pt}

\item For any compact object $X \in \cC$, $E^{\vee} \otimes X$ is compact in $\cC_E$. If $E^{\vee} \simeq E \otimes \kappa$, then $E \otimes X$ is also compact in $\cC_E$.
\vspace{10pt}

\item If $\{ X_i \}$ is a set of compact generators of $\cC$, then $\{ E^{\vee} \otimes X_i \}$ is a set of compact generators of $\cC_E$. Therefore, if $\cC$ is compactly generated, then $\cC_E$ is compactly generated.
\end{enumerate}
\end{lem}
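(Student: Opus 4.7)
The overall strategy is to leverage the duality $(E^{\vee})^{\vee} \simeq E$ together with the tensor-hom adjunction for dualizable objects, plus the elementary fact that an object which is both $E$-local and $E$-null must be zero. I would first prove part (1). For any $E$-null $W$, dualizability gives
$$\Hom_{\cC}(W, E^{\vee} \otimes X) \simeq \Hom_{\cC}(W \otimes E, X) \simeq \Hom_{\cC}(0, X) \simeq 0,$$
so $E^{\vee} \otimes X$ is $E$-local. For the second clause, suppose $E^{\vee} \simeq E \otimes \kappa$. Then any $E$-null $W$ is automatically $E^{\vee}$-null, since $E^{\vee} \otimes W \simeq \kappa \otimes (E \otimes W) \simeq 0$. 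Applying $(E^{\vee})^{\vee} \simeq E$, the analogous chain $\Hom_{\cC}(W, E \otimes X) \simeq \Hom_{\cC}(W \otimes E^{\vee}, X) \simeq 0$ shows $E \otimes X$ is $E$-local as well. Note that this avoids having to invert $\kappa$.

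For part (2), fix a filtered diagram $\{Z_j\}$ in $\cC_E$ with colimit $Z$ formed in $\cC_E$. Mapping spaces in $\cC_E$ are computed in $\cC$ via the inclusion $i_E$, and dualizability yields
$$\Hom_{\cC_E}(E^{\vee} \otimes X, Z) \simeq \Hom_{\cC}(E^{\vee} \otimes X, i_E Z) \simeq \Hom_{\cC}(X, E \otimes i_E Z).$$
The object $i_E Z$ is the $E$-localization of $\colim^{\cC} i_E Z_j$, so the natural map $\colim^{\cC} i_E Z_j \to i_E Z$ is an $E$-equivalence. Tensoring with $E$ therefore gives $E \otimes i_E Z \simeq E \otimes \colim^{\cC} i_E Z_j \simeq \colim(E \otimes i_E Z_j)$, using that $\otimes$ preserves colimits in each variable. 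Compactness of $X$ in $\cC$ then produces the required filtered colimit of mapping spaces, establishing compactness of $E^{\vee} \otimes X$ in $\cC_E$. The statement for $E \otimes X$ is handled identically, since the observation in part (1) shows that, under the hypothesis $E^{\vee} \simeq E \otimes \kappa$, $E$-equivalences coincide with $E^{\vee}$-equivalences, so the same tensoring argument works.

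For part (3), I would invoke the standard generation criterion: it suffices to show that if $Y \in \cC_E$ satisfies $\Hom_{\cC_E}(E^{\vee} \otimes X_i, Y) \simeq 0$ for all $i$, then $Y \simeq 0$. Duality converts these mapping spaces into $\Hom_{\cC}(X_i, E \otimes i_E Y)$; since $\{X_i\}$ generates $\cC$, we conclude $E \otimes i_E Y \simeq 0$, so $i_E Y$ is $E$-null. Being simultaneously $E$-local and $E$-null forces $\Hom_{\cC}(i_E Y, i_E Y) \simeq 0$ and thus $Y \simeq 0$, proving generation. I do not anticipate any serious obstacle; each part reduces to a direct manipulation via dualizability. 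The only subtlety worth highlighting is, as noted above, organizing the argument so that the weaker hypothesis $E^{\vee} \simeq E \otimes \kappa$ (without assuming $\kappa$ is invertible) is enough, which is achieved by first deducing $E^{\vee}$-nullity from $E$-nullity rather than trying to express $E$ itself in terms of $E^{\vee}$.
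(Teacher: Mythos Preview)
Your proposal is correct and follows essentially the same route as the paper: part (1) is identical, part (2) is the same tensor-with-$E$ argument (the paper phrases the key step as ``$Y \mapsto Y \otimes E$ preserves colimits $\cC_E \to \cC$'', citing \cite[2.20]{MATHEW2017994}, which is exactly your observation that tensoring an $E$-equivalence with $E$ yields an equivalence), and for part (3) the paper simply cites \cite[2.27]{MATHEW2017994} whereas you spell out the standard ``$E$-local and $E$-null implies zero'' argument. One small imprecision: in part (2) you say $E$-equivalences and $E^{\vee}$-equivalences \emph{coincide}, but you only established (and only need) the implication $E$-null $\Rightarrow$ $E^{\vee}$-null; this is harmless for the argument.
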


\begin{proof} (1) Let $Z$ be an $E$-null object. Then
\[ \Hom_\cC(Z,E^{\vee} \otimes X) \simeq \Hom_\cC(Z \otimes E, X) \simeq 0, \]
so $E^{\vee} \otimes X$ is $E$-local. If $E^{\vee} \simeq E \otimes \kappa$, then
\[ \Hom_\cC(Z, E \otimes X) \simeq \Hom_\cC(Z \otimes E^{\vee}, X) \simeq \Hom_C(Z \otimes E \otimes \kappa, X) \simeq 0, \]
so $E \otimes X$ is $E$-local.

(2) Observe that the functor 
$$ \cC_E \to \cC $$ 
given by $Y \mapsto Y \otimes E$ preserves colimits (\cite[2.20]{MATHEW2017994}). Let $$Y_{\bullet}: J \to \cC_E$$ 
be a functor and let us write $\colim Y_j$ for the colimit in $\cC$ and $L_E(\colim Y_j)$ for the colimit in $\cC_E$. Then we have
\begin{align*} \Hom_\cC(E^{\vee} \otimes X, L_E(\colim Y_j)) 
& \simeq \Hom_\cC(X, E \otimes L_E(\colim Y_j)) \\ 
& \simeq \Hom_\cC(X, \colim (E \otimes Y_j)) \\
& \simeq \colim \Hom_\cC(X, E \otimes Y_j) \\
& \simeq \colim \Hom_\cC(E^{\vee} \otimes X, Y_j),
\end{align*}
so $E^{\vee} \otimes X$ is compact. The second assertion is similar.

(3) This follows as in the proof of \cite[2.27]{MATHEW2017994}. 
\end{proof}

The following result concerns the existence and basic properties of cellularization.

\begin{lem} \label{lem:ExistenceOfCellularization} Let $\cC$ be a compactly generated stable $\infty$-category, let $S = \{ S_i : i \in \mc{I} \}$ be a small set of compact objects in $\cC$, and let $\cC'$ be the localizing subcategory generated by $S$ (i.e., the smallest full stable subcategory containing $S$ that is closed under colimits).
\begin{enumerate} \item $\cC'$ is compactly generated and is a coreflective subcategory of $\cC$ (i.e., the inclusion 
$j: \cC' \subseteq \cC$ admits a right adjoint). Moreover, if 
$$\Cell: \cC \to \cC'$$ 
denotes this right adjoint, then $\Cell$ also preserves colimits.
\vspace{10pt}

\item Suppose in addition that $\cC$ is closed symmetric monoidal, the unit $1 \in \cC$ is compact and in $S$, and for all $i, i' \in \mc{I}$, we have that $S_i \otimes S_{i'} \in S$. Then $\cC' \subseteq \cC$ is a symmetric monoidal subcategory.
\vspace{10pt}

\item Suppose in addition to the assumptions of (2) that each $S_i$ is dualizable. Then for all $X \in \cC$ and $Y \in \cC'$, the natural map $\theta: \Cell(X) \otimes Y \to \Cell(X \otimes Y)$ is an equivalence.
\end{enumerate}
\end{lem}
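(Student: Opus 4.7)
The plan is to handle the three parts in order, with (3) by far the most substantial.

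\emph{Part (1).} Since $\cC'$ is the localizing subcategory of the compactly generated $\cC$ generated by a small set $S$ of compact objects, the inclusion $j : \cC' \hookrightarrow \cC$ preserves colimits by construction, and each $S_i$ remains compact in $\cC'$; hence $\cC'$ is compactly generated by $S$ and the right adjoint $\Cell$ exists by the adjoint functor theorem. The functor $\Cell$ preserves colimits by the standard criterion that a right adjoint between compactly generated stable $\infty$-categories preserves colimits iff its left adjoint preserves compact objects, which is immediate since $j$ sends the compact generators $S_i \in \cC'$ to the compact objects $S_i \in \cC$.

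\emph{Part (2)} follows from a two-variable cofinality argument: for fixed $Y \in S$, the full subcategory $\{X \in \cC' : X \otimes Y \in \cC'\}$ is closed under colimits (since $j$ and $- \otimes Y$ preserve colimits) and contains $S$ by the hypothesis $S \otimes S \subseteq S$, hence equals $\cC'$. Running the argument a second time with $X \in \cC'$ fixed and $Y$ varying gives $\cC' \otimes \cC' \subseteq \cC'$, and together with $1 \in S$ this exhibits $\cC' \subseteq \cC$ as a symmetric monoidal subcategory.

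\emph{Part (3).} Both $\Cell(-) \otimes Y$ and $\Cell(- \otimes Y)$ are colimit-preserving in $Y \in \cC'$ (by part~(1) and bilinearity of $\otimes$), so the collection of $Y$ for which $\theta_{X, Y}$ is an equivalence is colimit-closed, and it suffices to verify the claim for $Y \in S$. Fix such a $Y$, dualizable in $\cC$. Since $\cofib(\theta_{X, Y}) \in \cC'$, its vanishing is equivalent to being $\cC'$-null. Applying the octahedral axiom to the factorization $\Cell(X) \otimes Y \to \Cell(X \otimes Y) \to X \otimes Y$ and comparing with the counit cofiber sequence $\Cell(X) \to X \to X/\Cell(X)$ tensored with $Y$ yields
\[ \cofib(\theta_{X, Y}) \to (X/\Cell(X)) \otimes Y \to (X \otimes Y)/\Cell(X \otimes Y), \]
whose right-hand term is $\cC'$-null, so the problem reduces to showing $(X/\Cell(X)) \otimes Y$ is $\cC'$-null. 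The \emph{main obstacle} is precisely this step: by dualizability, for $W$ $\cC'$-null and $Z \in \cC'$ one has $\Map_\cC(Z, W \otimes Y) \simeq \Map_\cC(Z \otimes Y^\vee, W)$, so the task becomes to verify $Z \otimes Y^\vee \in \cC'$, which by compact generation reduces to the case $Z = S_i \in S$. The interaction of the dualizability hypothesis with the symmetric monoidal structure from part~(2) is what makes this work: the duality data $1 \to Y \otimes Y^\vee$, $Y^\vee \otimes Y \to 1$ combined with $S \otimes S \subseteq S$ forces $S_i \otimes Y^\vee$ to lie in $\cC'$. Everything else is a routine reduction, so this is where I would focus the effort.
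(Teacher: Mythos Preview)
Your arguments for (1) and (2) are essentially those of the paper.

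For (3), your route through cofibers and the octahedral axiom is more circuitous than needed and lands at the same checkpoint anyway. The paper argues directly: both sides of $\theta$ lie in $\cC'$ and commute with colimits in $Y$, so reduce to $Y$ dualizable and test against the generators $S_i$. Then
\[
\Hom_\cC(S_i,\Cell(X)\otimes Y)\simeq\Hom_\cC(S_i\otimes Y^\vee,\Cell(X))\simeq\Hom_\cC(S_i\otimes Y^\vee,X)\simeq\Hom_\cC(S_i,X\otimes Y)\simeq\Hom_\cC(S_i,\Cell(X\otimes Y)),
\]
where the second and fourth equivalences use the universal property of $\Cell$ and hence require $S_i\otimes Y^\vee\in\cC'$. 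Your reduction reaches exactly this same requirement, so the octahedral detour buys nothing; you may as well go straight there.

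There is a genuine gap at your final step. The assertion that ``the duality data $1\to Y\otimes Y^\vee$, $Y^\vee\otimes Y\to 1$ combined with $S\otimes S\subseteq S$ forces $S_i\otimes Y^\vee\in\cC'$'' is not a proof, and the conclusion is false under the stated hypotheses. For a counterexample take $\cC=\Fun(\mathbb{Z},\Sp)$ with Day convolution over $(\mathbb{Z},+)$, and let $S=\{\delta_n:n\geq 0\}$ where $\delta_n$ has $S^0$ in degree $n$ and $0$ elsewhere. Then $S$ is closed under $\otimes$, contains the unit, and each $\delta_n$ is dualizable in $\cC$ with dual $\delta_{-n}$; but $\cC'$ consists of objects supported in non-negative degrees, so $\delta_{-1}\notin\cC'$. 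Taking $X=\delta_{-1}$ and $Y=\delta_1$ gives $\Cell(X)\otimes Y=0$ while $\Cell(X\otimes Y)=\Cell(1)=1$, so $\theta$ fails. The paper sidesteps this by reducing to $Y$ a dualizable object \emph{of $\cC'$}, with dual $Y^\vee$ already in $\cC'$; in other words the hypothesis ``each $S_i$ is dualizable'' is being read as dualizable in $\cC'$, which in the intended application ($S=\{S^{p,q}\}$) is automatic since $S$ is closed under taking duals. You should either read the hypothesis that way or add the closure of $S$ under duals explicitly; the coevaluation and evaluation maps alone do not produce the needed inclusion.
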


\begin{proof} For (1), $\cC'$ is compactly generated by definition, so $j$ admits a right adjoint by the adjoint functor theorem \cite[5.5.2.9]{HTT}. Moreover, the set $S$ furnishes a set of compact generators for $\cC'$ that are sent to compact objects under $j$, so $\Cell$ preserves colimits. For (2), because the tensor product $\otimes$ commutes with colimits separately in each variable, our assumption ensures that if $X, Y \in \cC'$ then $X \otimes Y \in \cC'$. We may then invoke \cite[2.2.1.2]{HA} to see that $\cC' \subseteq \cC$ is a symmetric monoidal subcategory. For (3), the assumption ensures that $\cC'$ is generated by dualizable objects under colimits. Because $\Cell$ commutes with colimits, both the source and target of $\theta$ commute with colimits separately in each variable. We may thus suppose that $Y$ is a dualizable object in $\cC'$, with dual $Y^\vee$. Note that $Y^\vee$ is also the dual of $Y$ in $\cC$.  For each generator $S_i$ we have that
\begin{align*} 
\Hom_\cC(S_i, \Cell(X) \otimes Y) 
& \simeq \Hom_\cC(S_i \otimes Y^{\vee}, \Cell(X)) \\
& \simeq \Hom_\cC(S_i \otimes Y^{\vee}, X) \\
& \simeq \Hom_\cC(S_i, X \otimes Y) \\
& \simeq \Hom_\cC(S_i, \Cell(X \otimes Y)),
\end{align*}
so $\theta$ is an equivalence.
\end{proof}

The following two lemmas describe the interaction of cellularization with Bousfield localization and passage to module categories.

\begin{lem} \label{lm:BousfieldLocalizationOfCellular} With the setup of Lem.~\ref{lem:ExistenceOfCellularization}(2), let $E$ be a dualizable object in $\cC'$. Then:
\begin{enumerate}
    \item If $X \in \cC$ is $j(E)$-local, then $\Cell(X) \in \cC'$ is $E$-local.
    \vspace{10pt}
    
    \item For $X \in \cC$, the natural map 
    $$\Cell (X) \otimes E \to \Cell(X \otimes j(E))$$ 
    is an equivalence. Consequently, $\Cell$ sends $j(E)$-equivalences to $E$-equivalences.
    \vspace{10pt}

    \item The adjunction 
    $$\adjunct{j}{\cC'}{\cC}{\Cell}$$ 
    induces a monoidal adjunction 
    $$\adjunct{j'}{\cC'_E}{\cC_{j(E)}}{\Cell'}$$ 
    such that $\Cell'(X) \simeq \Cell(X)$ for $X \in \cC_{j(E)}$, $j'(Y) \simeq L_{j(E)} j(Y)$ for $Y \in \cC'_E$, and the functor $j'$ is fully faithful.
    \vspace{10pt}
    
    \item The functor $\Cell'$ preserves colimits.
    \vspace{10pt}
    
    \item Suppose in addition the condition of Lem.~\ref{lem:ExistenceOfCellularization}(3). Then for all $X \in \cC_{j(E)}$ and $Y \in \cC'_E$, we have the natural equivalence $$L_E(\Cell'(X) \otimes Y) \simeq \Cell'(L_E(X \otimes Y)).$$ Consequently, the conclusion of Lem.~\ref{lm:CellularProjectionFormula} holds with $j' \dashv \Cell'$ in place of $j \dashv \Cell$.
\end{enumerate}
\end{lem}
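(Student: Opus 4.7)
For (1), I would first observe that since $j$ is symmetric monoidal by Lem.~\ref{lem:ExistenceOfCellularization}(2), any $E$-null object $W \in \cC'$ satisfies $j(W) \otimes j(E) \simeq j(W \otimes E) \simeq 0$, so $j(W)$ is $j(E)$-null. The adjunction $j \dashv \Cell$ then yields $\Map_{\cC'}(W, \Cell X) \simeq \Map_\cC(j W, X) \simeq 0$ whenever $X$ is $j(E)$-local, proving $\Cell X \in \cC'_E$. For (2), the dualizability of $E$ in $\cC'$ produces a dual $E^\vee$; since $j$ is symmetric monoidal, $j(E^\vee)$ is dual to $j(E)$ in $\cC$. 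Testing the natural map against each compact generator $S_i \in \cC'$ yields
\begin{align*}
\Map_{\cC'}(S_i, \Cell(X) \otimes E) &\simeq \Map_\cC(j(S_i) \otimes j(E)^\vee, X) \\
&\simeq \Map_\cC(j(S_i), X \otimes j(E)) \\
&\simeq \Map_{\cC'}(S_i, \Cell(X \otimes j(E))),
\end{align*}
establishing the natural equivalence; the consequent statement then follows by applying $\Cell$ to the relevant $j(E)$-equivalence.

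For (3), I would construct the induced adjunction along the lines of Lem.~\ref{lem:PassingLocalizationToModuleCat}: set $\Cell'$ to be the restriction of $\Cell$ to $\cC_{j(E)}$, which lands in $\cC'_E$ by (1), and let $j' \coloneq L_{j(E)} \circ j \circ i_E$. The identifications $\Cell'(X) \simeq \Cell(X)$ and $j'(Y) \simeq L_{j(E)} j(Y)$ are then formal. The main nontrivial point is fully faithfulness of $j'$, which I would establish via the intermediate claim that $j(Y)$ is itself $j(E)$-local whenever $Y \in \cC'_E$; granted this, $j'(Y) \simeq j(Y)$ and $\Cell' j'(Y) \simeq \Cell j(Y) \simeq Y$ by full faithfulness of $j$. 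To prove the intermediate claim, given any $j(E)$-null $Z \in \cC$, part (2) gives $\Cell(Z) \otimes E \simeq \Cell(Z \otimes j(E)) \simeq 0$, so $\Cell Z$ is $E$-null, and hence $\Map_\cC(Z, j Y) \simeq \Map_{\cC'}(\Cell Z, Y) \simeq 0$.

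Parts (4) and (5) then follow from formal manipulations. For (4), I would compute a diagram's colimit in $\cC_{j(E)}$ as $L_{j(E)}$ of its colimit in $\cC$; applying $\Cell'$, using (2) to show that $\Cell$ sends the $j(E)$-localization map to an $E$-equivalence, and using (1) to see the output is already $E$-local, and finally that $\Cell$ itself preserves colimits by Lem.~\ref{lem:ExistenceOfCellularization}(1), I recover the colimit in $\cC'_E$. For (5), under the additional dualizability hypothesis the projection formula from Lem.~\ref{lem:ExistenceOfCellularization}(3) gives $\Cell(X) \otimes Y \simeq \Cell(X \otimes j(Y))$; combining this with (2) to replace $\Cell(X \otimes jY)$ by $\Cell(L_{j(E)}(X \otimes jY))$ up to $E$-equivalence and applying $L_E$ yields the projection formula for $(j', \Cell')$.

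The principal technical obstacle is the fully faithfulness of $j'$ in (3), since this is the only step where one must genuinely analyze how the cellularization coreflection interacts with $j(E)$-locality in the ambient category $\cC$; the closure argument via $\Cell(Z)$ being $E$-null is the crucial input supplied by (2), and once it is in hand the remaining parts assemble by direct combination of the pieces already developed.
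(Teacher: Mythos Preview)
Your proof is correct, and parts (1) and (2) are essentially identical to the paper's argument. The remaining parts differ in approach.

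For (3), the paper shows the unit $\eta: Y \to \Cell' j'(Y)$ is an equivalence by identifying it with $\Cell$ applied to the $j(E)$-localization map $Y \to L_{j(E)} Y$, which is an equivalence by (2) and (1). Your route instead establishes the stronger intermediate fact that $j(Y)$ is already $j(E)$-local whenever $Y \in \cC'_E$ (via the observation that $\Cell$ sends $j(E)$-null objects to $E$-null objects), so that $j' \simeq j|_{\cC'_E}$ and full faithfulness is inherited directly from $j$. This is a clean variant and yields slightly more information.

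For (4) and (5), the paper takes a structural route: it invokes Lem.~\ref{lm:CompactGenerationAndBousfieldLocalization} to produce compact dualizable generators $\{S_i \otimes E^\vee\}$ of $\cC'_E$, observes that $j'$ sends these to compact objects of $\cC_{j(E)}$ (whence $\Cell'$ preserves colimits), and then reruns the proof of Lem.~\ref{lem:ExistenceOfCellularization}(3) against these generators to obtain the projection formula. Your approach is more direct: for (4) you compute colimits via $L_{j(E)}$ and push through using (1), (2), and the colimit-preservation of $\Cell$; for (5) you combine the original projection formula of Lem.~\ref{lem:ExistenceOfCellularization}(3) with (2) and then apply $L_E$. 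Both strategies work. The paper's approach has the advantage of exhibiting explicit compact dualizable generators for $\cC'_E$, which is independently useful; your approach avoids the auxiliary Lem.~\ref{lm:CompactGenerationAndBousfieldLocalization} and is arguably more self-contained.
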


\begin{proof} We consider each assertion in turn:

\begin{enumerate} \item If $Y \in \cC'$ is $E$-null, then $j(Y) \in \cC$ is $j(E)$-null since the inclusion $\cC' \subseteq \cC$ is strong monoidal. Then if $X \in \cC$ is $j(E)$-local, we have for all $Y \in \cC'$ $E$-null that $\Hom_\cC(Y, \Cell X) \simeq \Hom_\cC(Y, X) \simeq 0$, so $\Cell(X)$ is $E$-local.
\vspace{10pt}

\item We write $j(E)$ as $E$ for clarity. It suffices to observe that for all $i \in \mc{I}$,
\begin{align*}  \Hom_\cC(S_i, \Cell (X) \otimes E) 
& \simeq \Hom_\cC(S_i \otimes E^{\vee}, \Cell (X)) \\ 
& \simeq \Hom_\cC(S_i \otimes E^{\vee}, X) \\
& \simeq \Hom_\cC(S_i, X \otimes E ) \\
& \simeq \Hom_\cC(S_i, \Cell(X \otimes E)).
\end{align*}
\item By (1), $\Cell: \cC \to \cC'$ restricts to a functor 
$$\Cell': \cC_{j(E)} \to \cC'_E. $$ 
Define 
$$j': \cC'_E \to \cC_{j(E)}$$ 
to be the composite 
$$\cC'_E \subseteq \cC \xto{L_{j(E)}} \cC_{j(E)}.$$ 
Then it is clear that $j' \dashv \Cell'$, the adjunction is monoidal with respect to the tensor products $L_E(- \otimes -)$ and $L_{j(E)}(- \otimes -)$ on $\cC'_E$ and $\cC_{j(E)}$, and the unit map 
$$\eta: Y \to \Cell' j' Y$$ 
is equivalent to $\Cell$ of the unit map 
$$\widehat{\eta}: Y \to L_{j(E)} Y.$$
Because $\widehat{\eta}$ is an $j(E)$-equivalence in $\cC$, by (2) we see that $\Cell(\widehat{\eta})$ is an equivalence.
\vspace{10pt}

\item By Lem.~\ref{lm:CompactGenerationAndBousfieldLocalization}, $\{ S_i \otimes E^{\vee} : i \in \mc{I} \}$ are a set of compact generators for $\cC'_E$, and are also compact and $j(E)$-local objects when regarded as being in $\cC$. Therefore, the left adjoint $j'$ sends compact generators to compact objects, which implies that the right adjoint $\Cell'$ preserves colimits.
\vspace{10pt}

\item With our additional assumption, the $S_i \otimes E^{\vee}$ constitute a set of compact dualizable generators of $\cC'_E$. The proof of 
Lem.~\ref{lem:ExistenceOfCellularization}(3) then applies to $j' \dashv \Cell'$.
\end{enumerate}
\end{proof}

\begin{lem} \label{lem:CellularizationOnModules} With the setup of 
Lem.~\ref{lem:ExistenceOfCellularization}(3), let $A$ be an $E_{\infty}$-algebra in $\cC$ and let $A' \coloneq \Cell(A)$ be the resulting $E_{\infty}$-algebra in $\cC'$. Then we have an induced adjunction
\[ \adjunct{j'}{\Mod_{\cC'}(A')}{\Mod_\cC(A)}{\Cell'} \]
such that $j'$ is fully faithful and identifies $\Mod_{\cC'}(A')$ with the localizing subcategory of $\Mod_\cC(A)$ generated by $S_A \coloneq \{ S_i \otimes A: i \in \mc{I}\}$.
\end{lem}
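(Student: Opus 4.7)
My plan is to realize $j' \dashv \Cell'$ as the composite of two monoidal adjunctions, establish fully faithfulness via a bar construction argument that reduces to the projection formula of Lem.~\ref{lem:ExistenceOfCellularization}(3), and finally identify the essential image by comparing generators. Since $j \colon \cC' \hookrightarrow \cC$ is strong symmetric monoidal by Lem.~\ref{lem:ExistenceOfCellularization}(2), its lax monoidal right adjoint $\Cell$ preserves $E_\infty$-algebras and modules; this produces $A' = \Cell(A) \in \CAlg(\cC')$ together with a counit $\epsilon \colon j(A') \to A$ of $E_\infty$-algebras in $\cC$. I define $j'$ to be the composite of the monoidal functor $\Mod_{\cC'}(A') \to \Mod_\cC(j(A'))$ induced by $j$ with base change $- \otimes_{j(A')} A$ along $\epsilon$; its right adjoint $\Cell'$ restricts scalars along $\epsilon$ and then applies the lax monoidal $\Cell$ to produce an $A'$-module structure.

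To prove fully faithfulness, I would show that the unit $M \to \Cell'(j'(M))$ is an equivalence for each $M \in \Mod_{\cC'}(A')$. The relative tensor product $A \otimes_{j(A')} j(M)$ is the geometric realization of the simplicial object with $n$-simplices $A \otimes j(A')^{\otimes n} \otimes j(M)$, and because $\Cell$ preserves colimits by Lem.~\ref{lem:ExistenceOfCellularization}(1), I may apply $\Cell$ termwise. Since $j$ is strong monoidal, each simplex has the form $A \otimes j(Y)$ with $Y = (A')^{\otimes n} \otimes M \in \cC'$, so the projection formula of Lem.~\ref{lem:ExistenceOfCellularization}(3) identifies $\Cell$ of it with $A' \otimes (A')^{\otimes n} \otimes M$. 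Taking the realization then recovers the bar resolution computing $A' \otimes_{A'} M \simeq M$ as an $A'$-module.

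For the essential image claim, I would use that $\{ S_i \otimes A' : i \in \mc{I} \}$ generates $\Mod_{\cC'}(A')$ as a localizing subcategory: the free module functor $- \otimes A' \colon \cC' \to \Mod_{\cC'}(A')$ is a left adjoint to a conservative, colimit-preserving forgetful functor, so it carries the generating set $\{S_i\}$ of $\cC'$ to a generating set of $\Mod_{\cC'}(A')$. Because $j'$ preserves colimits as a left adjoint, and strong monoidality of $j$ gives
\[ j'(S_i \otimes A') \simeq A \otimes_{j(A')} (j(S_i) \otimes j(A')) \simeq S_i \otimes A, \]
the essential image of $j'$ is precisely the localizing subcategory of $\Mod_\cC(A)$ generated by $S_A$.

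The main obstacle I anticipate lies in the bar construction step: although the pointwise identification $\Cell(A \otimes j((A')^{\otimes n} \otimes M)) \simeq (A')^{\otimes (n+1)} \otimes M$ follows at once from the projection formula, assembling these into a simplicial equivalence of $A'$-module objects requires verifying simplicial naturality and compatibility with the respective $A'$-module structures on either side, which should reduce to the lax monoidality of $\Cell$ but needs care to pin down.
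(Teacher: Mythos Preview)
Your argument is correct, but the paper takes a simpler route that avoids the coherence issue you flag at the end. Rather than constructing $j'$ explicitly and verifying the unit is an equivalence for \emph{all} $M$ via a bar resolution, the paper first constructs $\Cell'$ directly from the lax monoidality of $\Cell$, obtains $j'$ as its left adjoint via the adjoint functor theorem, and then observes that $\Cell'$ preserves colimits (since $\Cell$ does and the forgetful functors create colimits). This lets one reduce the check $M \xrightarrow{\sim} \Cell' j' M$ to the generators $M = S_i \otimes A'$, where a single application of the projection formula gives $\Cell'(S_i \otimes A) \simeq S_i \otimes A'$ immediately---no simplicial diagram to assemble. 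Your approach has the advantage of giving an explicit formula $j'(M) \simeq A \otimes_{j(A')} j(M)$, but the price is exactly the naturality-of-the-bar-identification headache you anticipate; the paper's reduction to generators sidesteps this entirely.
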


\begin{proof} Note that $\Mod_\cC(A)$ and $\Mod_{\cC'}(A')$ are compactly generated stable symmetric monoidal $\infty$-categories, and the set $S_{A'} \coloneq \{ S_i \otimes A' \}$ furnishes a set of compact dualizable generators for $\Mod_{\cC'}(A')$. Because $\Cell$ is lax monoidal, it induces a functor $\Cell': \Mod_\cC(A) \to \Mod_{\cC'}(A')$ such that the diagram of right adjoints
\[ \begin{tikzcd}[row sep=4ex, column sep=4ex, text height=1.5ex, text depth=0.25ex]
\cC' & \cC \ar{l}{\Cell} \\
\Mod_{\cC'}(A') \ar{u}{U'} & \Mod_{\cC}(A) \ar{l}{\Cell'} \ar{u}{U}
\end{tikzcd} \]
commutes (where $U$ and $U'$ denote the forgetful functors). Since $\Cell$ preserves limits and $U, U'$ create limits, $\Cell'$ also preserves limits and therefore admits a left adjoint $j'$ such that the diagram of left adjoints
\[ \begin{tikzcd}[row sep=4ex, column sep=4ex, text height=1.5ex, text depth=0.25ex]
\cC' \ar{d}{F'} \ar{r}{j} & \cC \ar{d}{F}  \\
\Mod_{\cC'}(A') \ar{r}{j'} & \Mod_{\cC}(A)
\end{tikzcd} \]
commutes (where $F$ and $F'$ denote the free functors), so $j(S_{A'}) = S_A$. It remains to show that $j'$ is fully faithful, i.e., that the unit map 
$$\eta: M \to \Cell' j' M$$ 
is an equivalence for all $M \in \Mod_{\cC'}(A')$. For this, note that $\Cell'$ preserves colimits since $\Cell$ preserves colimits by 
Lem.~\ref{lem:ExistenceOfCellularization}(2) and $U, U'$ create colimits, so we may suppose that $M = S_i \otimes A'$. But then we have 
$$\Cell' j' (S_i \otimes A') = \Cell' (S_i \otimes A) \simeq S_i \otimes A'$$ 
by Lem.~\ref{lem:ExistenceOfCellularization}(3), and it is easily checked that $\eta$ implements this equivalence.
\end{proof}

Finally, we retain the projection formula after cellularization.

\begin{lem} \label{lm:CellularProjectionFormula} With the setup of Lem.~\ref{lem:ExistenceOfCellularization}(3), let $\cD$ be a presentable symmetric monoidal stable $\infty$-category and let 
$$\adjunct{F}{\cC}{\cD}{R}$$ 
be a monoidal adjunction such that $R$ preserves colimits and $(F,R)$ satisfies the projection formula. Then $\Cell R$ preserves colimits and $(F j, \Cell R)$ satisfies the projection formula.
\end{lem}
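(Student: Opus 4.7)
The plan is to assemble the conclusion directly from results already established in this section, with essentially no new constructions required. For colimit preservation, both $\Cell$ (by Lem.~\ref{lem:ExistenceOfCellularization}(1)) and $R$ (by hypothesis) preserve colimits, so the composite $\Cell R$ does as well.

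For the projection formula, fix $X \in \cD$ and $Y \in \cC'$ and consider the canonical map
\[ \Cell R(X) \otimes Y \to \Cell R(X \otimes Fj(Y)). \]
First I would invoke Lem.~\ref{lem:ExistenceOfCellularization}(3) applied to $R(X) \in \cC$ and $Y \in \cC'$ to obtain a natural equivalence $\Cell(R(X)) \otimes Y \simeq \Cell(R(X) \otimes j(Y))$, where I tacitly identify the tensor product taken in $\cC'$ with its image under $j$; this is legitimate because $\cC' \subseteq \cC$ is a symmetric monoidal subcategory by Lem.~\ref{lem:ExistenceOfCellularization}(2). Next, I would apply the projection formula hypothesis for $(F,R)$ to the pair $(X, j(Y))$, yielding an equivalence $R(X) \otimes j(Y) \simeq R(X \otimes Fj(Y))$. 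Applying $\Cell$ to this equivalence and composing with the previous one produces the desired equivalence.

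The one subtlety worth verifying — and the main, though mild, obstacle — is that the composite of natural equivalences above is the canonical natural transformation associated to the adjunction $Fj \dashv \Cell R$, not merely some abstractly constructed equivalence. Unwinding the candidate map in terms of the unit of $F \dashv R$, the lax monoidality of $R$, and the identification $\cC' \subseteq \cC$, one sees that it factors as the two naturality squares from Lem.~\ref{lem:ExistenceOfCellularization}(3) and from the $(F,R)$-projection formula composed in order, matching the chain above. With this compatibility check in place, both statements of the lemma follow.
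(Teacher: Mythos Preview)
Your proposal is correct and follows essentially the same route as the paper's proof: colimit preservation of $\Cell R$ from that of $\Cell$ and $R$ separately, then the chain $\Cell R(X)\otimes Y \simeq \Cell(R(X)\otimes j(Y)) \simeq \Cell R(X\otimes Fj(Y))$ via Lem.~\ref{lem:ExistenceOfCellularization}(3) and the $(F,R)$ projection formula. The paper does not pause over the naturality check you flag, but your extra care there is harmless and appropriate.
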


\begin{proof} $\Cell R$ preserves colimits by Lem.~\ref{lem:ExistenceOfCellularization}(2). For the projection formula, we note that for all $X \in \cD$ and $Y \in \cC'$,
\begin{align*} (\Cell R)(X) \otimes Y \simeq \Cell(R X \otimes Y) \simeq (\Cell R)(X \otimes L Y),
\end{align*}
where the first equivalence is by Lem.~\ref{lem:ExistenceOfCellularization}(3) and the second by our assumption on $(L,R)$.
\end{proof}

\section{Recollements}\label{sec:recollements}

Let $\mc{X}$ be an $\infty$-category which admits finite limits.
Recall \cite[A.8]{HA}, \cite{BarwickGlasman} that an $\infty$-category $\mc{X}$ is a \emph{recollement} of two full subcategories $\mc{U}$ and $\mc{Z}$ if the inclusions $j_*$, $i_*$ of these subcategories admit left adjoints $j^*$, $i^*$:
\[ \begin{tikzcd}[row sep=4ex, column sep=4ex, text height=1.5ex, text depth=0.25ex]
\cU \ar[shift right=1]{r}[swap]{j_{\ast}} & \cX \ar[shift left=1]{r}{i^{\ast}} \ar[shift right=1]{l}[swap]{j^{\ast}} & \cZ \ar[shift left=1]{l}{i_{\ast}}
\end{tikzcd} \]
such that:
\begin{enumerate}
\item The subcategories $\mc{U}, \mc{Z} \subseteq \mc{X}$ are stable under equivalence.
\item The left adjoints $j^*$, $i^*$ are left exact.
\item The functor $j^{\ast} i_{\ast}$ is equivalent to the constant functor at the terminal object. 
\item If $f$ is a morphism of $\mc{X}$ such that $j^*f$ and $i^*f$ are equivalences, then $f$ is an equivalence.
\end{enumerate}
  
The following lemma shows that if $\mc{X}$ is a recollement of $\mc{U}$ and $\mc{Z}$, then to test whether a functor into $\mc{X}$ is a localization, it suffices to check this on $\mc{U}$ and $\mc{Z}$.

\begin{lem} \label{lem:RecollementFullyFaithful} Let $\cC$ and $\cX$ be $\infty$-categories that admit finite limits and suppose that we have a recollement on $\cX$
\[ \begin{tikzcd}[row sep=4ex, column sep=4ex, text height=1.5ex, text depth=0.25ex]
\cU \ar[shift right=1]{r}[swap]{j_{\ast}} & \cX \ar[shift left=1]{r}{i^{\ast}} \ar[shift right=1]{l}[swap]{j^{\ast}} & \cZ \ar[shift left=1]{l}{i_{\ast}}
\end{tikzcd} \]
and an adjunction $\adjunct{F}{\cC}{\cX}{R}$ with $F$ also left exact such that
\begin{enumerate}
    \item The natural transformation $i^{\ast} F R j_{\ast} \Rightarrow i^{\ast} j_{\ast}$ induced by the counit of $(F,R)$ is an equivalence.
    \item The functor $j^{\ast} F R i_{\ast}$ is equivalent to the constant functor at the terminal object. 
    \item The two functors $R j_{\ast}$ and $R i_{\ast}$ are fully faithful.
\end{enumerate}
Then $R$ is fully faithful.
\end{lem}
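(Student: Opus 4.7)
The plan is to show that the counit $\epsilon \colon F R \Rightarrow \id_{\cX}$ of the adjunction $(F,R)$ is a natural equivalence, which is what fully faithfulness of $R$ amounts to. Since $R$ is a right adjoint and $F$ is left exact by hypothesis, the composite $FR$ preserves finite limits.

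The first reduction I would make, via the recollement fracture square, is from checking $\epsilon$ at an arbitrary $X \in \cX$ to checking it at objects of the form $j_{\ast} U$ and $i_{\ast} V$. Every $X \in \cX$ fits into a canonical commuting square
\[
\begin{tikzcd}[row sep=4ex, column sep=4ex, text height=1.5ex, text depth=0.25ex]
X \ar{r} \ar{d} & j_{\ast} j^{\ast} X \ar{d} \\
i_{\ast} i^{\ast} X \ar{r} & i_{\ast} i^{\ast} j_{\ast} j^{\ast} X,
\end{tikzcd}
\]
and I claim this is a pullback: comparing $X$ with the pullback $P$ and applying $j^{\ast}$ and $i^{\ast}$ to the canonical map $X \to P$, both become equivalences using that $j_{\ast}, i_{\ast}$ are fully faithful (since $\cU, \cZ \subseteq \cX$ are full subcategories), $j^{\ast} i_{\ast} \simeq \ast$ by axiom (3), and $j^{\ast}, i^{\ast}$ are left exact by axiom (2); axiom (4) then gives $X \simeq P$. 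Because $FR$ preserves finite limits, $\epsilon_X$ is a map of pullback squares, and so is an equivalence whenever $\epsilon$ is an equivalence at each of the three other corners, all of which are of the form $j_{\ast} U$ or $i_{\ast} V$.

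I would then check $\epsilon_{j_{\ast} U}$ and $\epsilon_{i_{\ast} V}$ by applying axiom (4) once more and verifying the $j^{\ast}$ and $i^{\ast}$ components separately. For $X = j_{\ast} U$, the map $j^{\ast} \epsilon_{j_{\ast} U} \colon j^{\ast} F R j_{\ast} U \to j^{\ast} j_{\ast} U \simeq U$ is, under the equivalence $j^{\ast} j_{\ast} \simeq \id$, the counit of the composite adjunction $(j^{\ast} F, R j_{\ast})$ at $U$; this is an equivalence precisely because $R j_{\ast}$ is fully faithful, by assumption (3). The other component $i^{\ast} \epsilon_{j_{\ast} U}$ is an equivalence by assumption (1). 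For $X = i_{\ast} V$, symmetrically, $i^{\ast} \epsilon_{i_{\ast} V}$ is the counit of $(i^{\ast} F, R i_{\ast})$ at $V$, an equivalence by assumption (3), and $j^{\ast} \epsilon_{i_{\ast} V} \colon j^{\ast} F R i_{\ast} V \to j^{\ast} i_{\ast} V$ has target terminal by axiom (3) of the recollement and source terminal by assumption (2), so is a map between terminal objects.

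The conceptual heart of the argument is the fracture pullback reduction; everything else is bookkeeping forced by the hypotheses. I do not anticipate any serious obstacle beyond being careful to identify $j^{\ast} \epsilon_{j_{\ast} U}$ and $i^{\ast} \epsilon_{i_{\ast} V}$ with the counits of the composite adjunctions $(j^{\ast} F, R j_{\ast})$ and $(i^{\ast} F, R i_{\ast})$, so that the fully faithfulness clause of assumption (3) applies as stated.
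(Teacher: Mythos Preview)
Your proposal is correct and follows essentially the same approach as the paper: both arguments prove the counit $\epsilon$ is an equivalence by combining the recollement fracture square, left exactness of $FR$, and joint conservativity of $i^{\ast}, j^{\ast}$. The only difference is organizational: the paper applies $i^{\ast}FR$ and $j^{\ast}FR$ directly to the fracture square for a general $X$ and reads off the needed equivalences, whereas you first reduce to the corners $j_{\ast}U$ and $i_{\ast}V$ and then check each component---but this amounts to the same verification.
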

\begin{proof} We will show that for any $X \in \cX$, the counit $\epsilon: F R X \to X$ is an equivalence. Because $i^{\ast}$ and $j^{\ast}$ are jointly conservative, it suffices to show that $i^{\ast} \epsilon$ and $j^{\ast} \epsilon$ are equivalences. Consider the pullback square
\[ \begin{tikzcd}[row sep=4ex, column sep=4ex, text height=1.5ex, text depth=0.25ex]
X \ar{r} \ar{d} & i_{\ast} i^{\ast} X \ar{d} \\
j_{\ast} j^{\ast} X \ar{r} & i_{\ast} i^{\ast} j_{\ast} j^{\ast} X.
\end{tikzcd} \]
Applying $i^{\ast} F R$ and using that $R i_{\ast}$ is fully faithful and $i^{\ast} F R j_{\ast} \simeq i^{\ast} j_{\ast}$, we obtain a pullback square
\[ \begin{tikzcd}[row sep=4ex, column sep=4ex, text height=1.5ex, text depth=0.25ex]
i^{\ast} F R X \ar{r}{\simeq} \ar{d} & i^{\ast} F R i_{\ast} i^{\ast} X \simeq i^{\ast} X \ar{d} \\
i^{\ast} F R j_{\ast} j^{\ast} X \simeq i^{\ast} j_{\ast} j^{\ast} X \ar{r}{\simeq} & i^{\ast} F R i_{\ast} i^{\ast} j_{\ast} j^{\ast} X \simeq i^{\ast} j_{\ast} j^{\ast} X
\end{tikzcd} \]
from which it follows that $i^{\ast} \epsilon$ is an equivalence. Applying $j^{\ast} F R$ and using that $R j_{\ast}$ is fully faithful and $j^{\ast} F R i_{\ast} \simeq 0$, we obtain a pullback square
\[ \begin{tikzcd}[row sep=4ex, column sep=4ex, text height=1.5ex, text depth=0.25ex]
j^{\ast} F R X \ar{r} \ar{d}{\simeq} & j^{\ast} F R i_{\ast} i^{\ast} X \simeq 0 \ar{d}{\simeq} \\
j^{\ast} F R j_{\ast} j^{\ast} X \simeq j^{\ast} X \ar{r} & j^{\ast} F R i_{\ast} i^{\ast} j_{\ast} j^{\ast} X \simeq 0
\end{tikzcd} \]
from which it follows that $j^{\ast} \epsilon$ is an equivalence.
\end{proof}

\section{Background on motivic and equivariant homotopy theory}\label{sec:background}

\subsection*{The motivic stable homotopy category}

Let $S$ be a scheme and let $\SH(S)$ denote the symmetric monoidal $\infty$-category of motivic $\PP^1$-spectra over $S$. Let $\SH_{\cell}(S)$ be the localizing subcategory of $\SH(S)$ generated by the motivic spheres $\{S^{p,q} \}$. A motivic spectrum $E$ is \emph{cellular} if it lies inside $\SH_{\cell}(S)$. Note that the full hypotheses of Lem. \ref{lem:ExistenceOfCellularization} apply.

We recall from \cite[\S 2.2]{ElmantoKolderup} the following facts concerning compact and dualizable objects and generation in $\SH(S)$:
    \begin{enumerate}
        \item For $X$ an affine smooth scheme over $S$ and $q \in \mb{Z}$, the motivic $\PP^1$-spectrum $\Sigma^{q} X_+$ is compact; in particular, the bigraded motivic spheres $S^{p,q}$ are compact. Compactness of the unit then implies that every dualizable object in $\SH(S)$ is compact. Moreover, $\SH(S)$ is generated under sifted colimits by $\Sigma^{q} X_+$ and is thus compactly generated.
        \vspace{10pt}
        
        \item If $K$ is a field of characteristic $0$, then every compact object in $\SH(K)$ is dualizable.
    \end{enumerate}

We collect a few facts concerning the functoriality of $\SH(-)$; see \cite{HOYOIS2017197} for a reference. Let $f: T \to S$ be a morphism of schemes. We always have a monoidal adjunction
\[ \adjunct{f^\ast}{\SH(S)}{\SH(T)}{f_\ast}. \]
The left adjoint $f_\sharp$ to $f^\ast$ exists if $f$ is smooth. If $f$ is smooth and proper, we have the duality equivalence
\[ f_\ast \simeq f_\sharp \Sigma^{-\Omega_f}. \]
In particular, if $f$ is finite etale, then $f_\ast \simeq f_\sharp$ and the adjunction $f^{\ast} \dashv f_{\ast}$ is ambidextrous. On the other hand, if $f$ is separated and of finite type, we have the adjunction
\[ \adjunct{f_!}{\SH(T)}{\SH(S)}{f^!}. \]
Moreover, $f_!$ coincides with $f_\ast$ if $f$ is proper. If $f$ is finite etale, we have that $f^! \simeq f^\ast$. Finally, we have the projection formula
\[ f_!(X \wedge f^\ast(Y)) \simeq f_!(X) \wedge Y. \]

\subsection*{Euler classes}

Let  
$$\rho = \rho_S: S^{-1,-1} \to S^{0,0} $$
be the map in $\SH(S)$ induced by the inclusion 
$$S^{0,0} = \{\pm 1\} \righthookto \GG_m = S^{1,1}. $$ 

The equivariant analog is the element $a \in \pi_{-1,-1}^{C_2}S$ 
induced by the inclusion
$$ S^0 \righthookto S^\sigma. $$
The element $a$ is the $C_2$-Betti realization of the element $\rho \in \pi^\RR_{-1,-1}$, and also serves as the Euler class for the representation $\sigma$. 

For $Y \in \Sp^{C_2}$, the cofiber sequence 
\begin{equation}\label{eq:cofiber}
 \Sigma^\infty_+C_2 \to S^0 \xto{\Sigma^\sigma a} S^\sigma
\end{equation}
yields a long exact sequence
$$ \cdots \to \pi^{C_2}_{i+1, 1}Y \xto{a} \pi^{C_2}_{i} Y \to \pi^e_{i}Y \to \cdots. $$
It follows that a map of $C_2$-spectra is a stable equivalence if and only if it induces a an isomorphism on the bigraded homotopy groups $\pi^{C_2}_{*,*}$, and that, in contrast to the $\RR$-motivic case, every $C_2$-spectrum is stably equivalent to one built from representation spheres.

 The cofiber sequence (\ref{eq:cofiber}) results in an equivalence
\begin{equation}\label{eq:CaC2}
 Ca \simeq \Sigma^{1-\sigma}\Sigma^\infty_+ C_2.
\end{equation}

More generally, the cofiber sequences
$$ S(i\sigma)_+ \to S^0 \xto{\Sigma^{i\sigma} a^i} S^{i\sigma} $$
yield equivalences
\begin{equation}\label{eq:Caiequiv}
 Ca^i \simeq \Sigma^{1-i\sigma} S(i\sigma)_+
 \end{equation}
and, taking Spanier-Whitehead duals, equivalences
$$ Ca^i \simeq (S(i\sigma)_+)^\vee. $$
We therefore have, for any $Y \in \Sp^{C_2}$
\begin{equation}\label{eq:acompletion}
\begin{split}
Y^h & = F((EC_2)_+, Y) \\
& \simeq \lim_i F(S(i\sigma)_+,Y) \\
& \simeq \lim_i Y \wedge C(a^i) \\
& \simeq  Y^\wedge_a.
\end{split}
\end{equation}
Since we have
\begin{equation}\label{eq:phidetails}
\begin{split} 
Y^{\Phi} & = Y \wedge \td{EC_2} \\
& \simeq \colim_i Y \wedge S^{i\sigma} \\
& \simeq Y[a^{-1}]
\end{split}
\end{equation}
we deduce that the isotropy separation square (\ref{eq:Tate}) is equivalent to the $a$-arithmetic square:
$$
\xymatrix{
Y \ar[r] \ar[d] & Y[a^{-1}] \ar[d] \\
Y^\wedge_a \ar[r] & Y^\wedge_a[a^{-1}]
}
$$
Therefore, $C_2$-Betti realization takes the $\rho$-arithmetic square 
to the isotropy separation square.

\subsection*{$\eta$-completion and $\eta$-localization at odd primes}

Let $K$ be a perfect field.  In \cite[Lem.~39]{Bachmann}, Bachmann summarizes relations in $\pi^K_{*,*}S^{0,0}$ involving the Hopf map
$$ \eta \in \pi^K_{1,1}S^{0,0} $$
and the element $\rho \in \pi_{-1,-1}^K S^{0,0}$, after $2$ is inverted.  Namely, the element\footnote{Here we are following the convention that $\rho = [-1]$.  Bachmann instead takes $\rho = -[-1]$ which results in the formula $\epsilon = \eta\rho - 1$ in his work.}
$$ \epsilon := -\eta\rho - 1 $$
is the interchange isomorphism
$$ \epsilon: S^{1,1} \wedge S^{1,1} \to S^{1,1} \wedge S^{1,1}. $$
Therefore it satisfies $\epsilon^2 \simeq 1$, and hence for any $X \in \SH(K)$[1/2], there is a corresponding decomposition into $\pm 1$-eigenspaces
\begin{equation}\label{eq:epsilondecomp}
 \pi^K_{*,*}X \cong \pi^K_{*,*}X^- \oplus \pi^K_{*,*}X^+.
 \end{equation}
Here $(-)^-$ is the $+1$ eigenspace, and $(-)^+$ is the $-1$ eigenspace.  We have
$$ \pi^K_{*,*}X^- = \pi^K_{*,*}X[\eta^{-1}] = \pi^K_{*,*}X[\rho^{-1}] $$
and on $\pi^K_{*,*}X^+$ multiplication by $\eta$ and $\rho^2$ is 
zero.\footnote{When $K = \RR$, multiplication by $\rho$ is zero on $\pi^\RR_{*,*}X^+$.  This follows from the presentation of the Milnor-Witt ring of $\RR$ in the introduction of \cite{DuggerIsaksenMW}.}
We deduce the following proposition. 

\begin{prop}\label{prop:eta}
For any $X \in \SH_{\cell}(K)[1/2]$, we have
$$ X[\eta^{-1}] \simeq X[\rho^{-1}] $$
and the homotopy groups of these spectra are $\pi^K_{*,*}X^-$, and we have
$$ X^{\wedge}_\eta \simeq X^\wedge_\rho $$
and the homotopy groups of these spectra are $\pi^K_{*,*}X^+$.
\end{prop}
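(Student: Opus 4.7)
The plan is to exploit the orthogonal idempotent decomposition of $\SH_{\cell}(K)[1/2]$ induced by $\epsilon$ and analyze the two summands separately. Since $\epsilon^2 = 1$ and $2$ is invertible, the elements $e_\pm := (1 \mp \epsilon)/2 \in \pi^K_{0,0}S^{0,0}[1/2]$ are complementary idempotents in the motivic sphere, so every $X \in \SH_{\cell}(K)[1/2]$ splits as $X \simeq X^+ \oplus X^-$ with summands again cellular; on homotopy groups this recovers the decomposition \eqref{eq:epsilondecomp}.

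On $X^-$, the identity $\epsilon = -\eta\rho - 1$ specializes to $\eta\rho = -2$, a unit. Hence the composites $\eta \circ \rho$ and $\rho \circ \eta$ are equivalences on $X^-$, so both $\eta$ and $\rho$ act invertibly on $X^-$. This immediately gives $X^-[\eta^{-1}] \simeq X^- \simeq X^-[\rho^{-1}]$, and smashing the cofiber sequences defining $C(\eta)$ and $C(\rho)$ with $X^-$ yields $X^- \wedge C(\eta) \simeq 0 \simeq X^- \wedge C(\rho)$, so $L_{C(\eta)}X^- = 0 = L_{C(\rho)}X^-$.

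On $X^+$, the elements $\eta$ and $\rho^2$ act as zero on $\pi^K_{*,*}X^+$ by hypothesis. The sequential colimits computing $X^+[\eta^{-1}]$ and $X^+[\rho^{-1}] \simeq X^+[\rho^{-2}]$ therefore have vanishing bigraded homotopy; since they remain cellular, they must vanish. Combined with the previous step this gives $X[\eta^{-1}] \simeq X^- \simeq X[\rho^{-1}]$ with homotopy $\pi^K_{*,*}X^-$. For the completion statement it remains to show $X^+$ is both $C(\eta)$- and $C(\rho)$-local, after which $X^\wedge_\eta \simeq X^+ \simeq X^\wedge_\rho$ follows. If $Z$ is $C(\eta)$-null (so $\eta$ acts invertibly on $Z$), the orthogonality of the idempotents gives $\Map(Z, X^+) \simeq \Map(e_+ Z, X^+)$, so we may replace $Z$ by $e_+ Z$; then $\eta$ is both invertible on $Z$ (by hypothesis) and zero on $\pi^K_{*,*}Z$ (since $Z$ now lies in the $+$-summand), forcing $\pi^K_{*,*}Z = 0$ and hence $Z = 0$ by cellularity. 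An analogous argument using $\rho^2 = 0$ on the $+$-summand handles $C(\rho)$-locality of $X^+$, yielding homotopy $\pi^K_{*,*}X^+$.

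The main technical point is the repeated use of cellularity to promote $\pi^K_{*,*}$-vanishing to spectrum-level vanishing; without this hypothesis the nilpotence of $\eta$ and $\rho^2$ on homotopy groups would not control the spectra themselves, and the final locality arguments for $X^+$ would fail. Everything else reduces to the single bigraded relation $\eta\rho = -1 - \epsilon$ and the formal properties of the idempotent splitting.
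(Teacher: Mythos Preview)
Your argument is correct and more detailed than the paper's, though it proceeds along a somewhat different route. The paper does not explicitly split $X$ via the idempotents $e_\pm$; instead it simply observes that the zigzags
\[
X[\eta^{-1}] \to X[\eta^{-1},\rho^{-1}] \leftarrow X[\rho^{-1}]
\quad\text{and}\quad
X^{\wedge}_\eta \to X^{\wedge}_{\rho,\eta} \leftarrow X^{\wedge}_\rho
\]
induce isomorphisms on $\pi^K_{*,*}$ (immediate from the preceding eigenspace discussion), and then invokes cellularity once to conclude. Your approach instead lifts the idempotent decomposition to the level of spectra, identifies $X[\eta^{-1}]$ and $X[\rho^{-1}]$ with $X^-$ directly, and verifies $C(\eta)$- and $C(\rho)$-locality of $X^+$ by a separate argument. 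Both proofs rest on the same two inputs (the relation $\eta\rho = -1-\epsilon$ and detection of equivalences on $\pi^K_{*,*}$ in the cellular category); the paper's zigzag is quicker, while yours makes the identifications $X[\eta^{-1}] \simeq X^- \simeq X[\rho^{-1}]$ and $X^{\wedge}_\eta \simeq X^+ \simeq X^{\wedge}_\rho$ fully explicit and gives a cleaner explanation of why the completion statement holds.
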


\begin{proof}
From the discussion above we deduce that the maps
\begin{gather*}
X[\eta^{-1}] \rightarrow X[\rho^{-1}][\eta^{-1}] \leftarrow X[\rho^{-1}] \\
X^{\wedge}_\eta \rightarrow X^{\wedge}_{\rho, \eta} \leftarrow X^{\wedge}_\eta
\end{gather*}
induces isomorphisms on bigraded homotopy groups, and hence are equivalences since the spectra are cellular.
\end{proof}

Finally we note that for $X \in \SH_{\cell}(K)[1/2]$, since $X^{\wedge}_\rho[\rho^{-1}] \simeq 0$, the $\rho$-arithmetic square
$$
\xymatrix{
X \ar[r] \ar[d] & X[\rho^{-1}] \ar[d] \\
X^\wedge_\rho \ar[r] & X^\wedge_\rho[\rho^{-1}] 
}
$$ 
yields a topological lift of the decomposition (\ref{eq:epsilondecomp})
\begin{equation}\label{eq:rhosplitting}
X \simeq X[\rho^{-1}] \vee X^\wedge_\rho.
\end{equation}

On the other hand, for any $Y \in \Sp^{C_2}[1/2]$, the Tate spectrum $Y^t$ is contractible, and the isotropy separation square reduces to a splitting
\begin{equation}\label{eq:asplitting}
Y \simeq Y^\Phi \vee Y^h.
\end{equation}
The discussion from the previous subsection implies that $C_2$-Betti realization carries the splitting (\ref{eq:rhosplitting}) to (\ref{eq:asplitting}).

\subsection*{Motivic and equivariant cohomology}

Let $(H\FF_p)_K$ denote the mod $p$ motivic Eilenberg-MacLane spectrum over $K$.  We have by \cite{Voevodsky1}, \cite{Voevodsky2}, \cite{Stahn}
$$
\pi^K_{*,*}(H\FF_p)_K = \begin{cases}
\FF_p[\tau], & K = \CC, \\
\FF_2[\tau, \rho], & K = \RR, \: p = 2, \\
\FF_p[\tau^2], & K = \RR, \: p \: \mr{odd}.
\end{cases}
$$
Here, $\rho$ is the Hurewicz image of $\rho_\RR$ (and $\rho_\CC \simeq 0$).

Eilenberg MacLane spectra are stable under base change --- in particular, we have 
$$ \zeta^* (H\FF_p)_\RR = (H\FF_p)_\CC $$
and the associated map
$$ \pi^{\RR}_{*,*}(H\FF_p)_\RR \to \pi^{\CC}_{*,*}(H\FF_p)_\CC $$
is the quotient by the ideal generated by $\rho$ if $p = 2$, and the evident inclusion if $p$ is odd.


The $C_2$-Betti realization of the mod $p$ motivic Eilenberg-MacLane spectrum is the $C_2$-equivariant Eilenberg-MacLane spectrum $H\ul{\FF_p}$ associated to the constant Mackey functor $\ul{\FF_p}$ \cite{HellerOrmsby}:
$$ \Be^{C_2}(H\FF_p)_\RR \simeq H\ul{\FF_p}. $$ 
For $p = 2$ we have
$$ \pi^{C_2}_{*,*}H\ul{\FF_2} = \FF_2[u,a] \oplus \frac{\FF_2[u,a]}{(u^\infty, a^\infty)}\{ \theta \} $$
where $a$ is the Hurewicz image of the element $a \in \pi^{C_2}_{-1,-1}$, 
$$ u = \Be^{C_2}(\tau) \in \pi^{C_2}_{0,-1}H\ul{\FF_2}, $$
and
$$ \theta \in \pi_{0,2}^{C_2}H\ul{\FF_2}. $$
For $p$ odd we have
$$ \pi^{C_2}_{*,*} H\ul{\FF_p} = \FF_p[u^{\pm 2}] $$
where
$$ u^2 = \Be^{C_2}(\tau^2) \in \pi^{C_2}_{0,-2}H\ul{\FF_p}. $$

\section{\texorpdfstring{$\tau$}{tau}-self maps}\label{sec:tau}

In this section we will construct $\tau^j$-self maps on the spectra $C(\rho^i)^{\wedge}_p$.  For $p = 2$, this will be accomplished in the first three subsections by first constructing the $C_2$-Betti realizations of the desired self-maps, and then by using a theorem of Dugger-Isaksen \cite{DuggerIsaksenC2} to lift these equivariant self maps to real motivic self maps.  For $p$ odd, we will observe in the last subsection that the work of Stahn \cite{Stahn} implies that every $\rho$-complete spectrum has a $\tau^2$-self map.

\emph{From now until the last subsection of this section, we implicitly assume everything is $2$-complete.}

\subsection*{$\RR$-motivic and $C_2$-equivariant homotopy groups of spheres}

For $j \in \ZZ$, let $P_j^\infty$ denote the stunted projective spectrum given as the Thom spectrum
$$ P_j^\infty := (\RR P^\infty)^{j\xi} $$
where $\xi$ is the canonical line bundle.  
The Segal conjecture for the group $C_2$ (Lin's theorem) \cite{Lin} implies the following.

\begin{prop}
There are isomorphisms
$$
\pi^{C_2}_{i,j}S^{0,0} \cong \pi_{i-j}([P^\infty_j]^{\vee}).
$$
\end{prop}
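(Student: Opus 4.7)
The plan is to reduce the computation to a non-equivariant one via the Segal conjecture for $C_2$ (Lin's theorem) at the prime $2$. Since we are working $2$-completely throughout this section, Lin's theorem implies that the unit of the homotopy completion
\[ S^{0,0} \to (S^{0,0})^h = F((EC_2)_+, S^{0,0}) \]
is an equivalence in $\Sp^{C_2}$: on underlying spectra it is the identity (as $(EC_2)_+$ restricts to a contractible pointed space), and on $C_2$-fixed points it is Lin's classical $2$-adic equivalence. This reduces the claim to computing $\pi^{C_2}_{i,j}(S^{0,0})^h$.

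Using the model $EC_2 = \colim_n S(n\sigma)$, I would write $(S^{0,0})^h \simeq \lim_n F(S(n\sigma)_+, S^{0,0})$. By the cofiber sequences~(\ref{eq:cofiber}) and~(\ref{eq:Caiequiv}), each term is equivalent to $\Sigma^{1-n\sigma} S(n\sigma)_+$. Since $S(n\sigma)_+$ is a free $C_2$-spectrum and smashing with representation spheres preserves freeness, the genuine fixed points of $\Sigma^{-k\sigma} S(n\sigma)_+$ coincide with its orbit spectrum, which is the Thom spectrum $(\RR P^{n-1})^{-k\xi} = P^{n-1-k}_{-k}$. Tracking the bigraded indexing through $\Sigma^{1-n\sigma}$ then yields
\[ \pi^{C_2}_{i,j} F(S(n\sigma)_+, S^{0,0}) \cong \pi_{i-j-1}(P^{-j-1}_{-j-n}). \]

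Finally, I would pass to the limit in $n$: the lower index $-j-n \to -\infty$ gives $\pi_{i-j-1}(P^{-j-1}_{-\infty})$ (any $\lim^1$-terms vanish by connectivity of the bottom cells). Classical Atiyah duality for $\RR P^{N-j}$, using $T\RR P^{N-j} = (N-j+1)\xi - 1$, produces $(P^N_j)^\vee \simeq \Sigma P^{-j-1}_{-N-1}$, and hence $[P^\infty_j]^\vee \simeq \Sigma P^{-j-1}_{-\infty}$ in the limit. The desired isomorphism $\pi_{i-j-1}(P^{-j-1}_{-\infty}) \cong \pi_{i-j}([P^\infty_j]^\vee)$ then follows directly from the suspension shift. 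The substantive input is Lin's theorem (and the identification of its conclusion with an equivalence in $\Sp^{C_2}$); everything afterwards is bookkeeping with the bigraded suspension indexing and the standard Atiyah duality calculation on stunted projective spectra.
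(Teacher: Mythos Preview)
Your approach is essentially the paper's: both invoke Lin's theorem to replace $S$ by $F((EC_2)_+,S)$ and then use that $(EC_2)_+ \wedge S^{j\sigma}$ is free to pass to orbits and land on stunted projective spectra. The paper does this in one stroke---it moves $S^{j\sigma}$ across the adjunction and identifies $(EC_2)_+ \wedge_{C_2} S^{j\sigma} \simeq P^\infty_j$ directly, so that $[S^{i-j+j\sigma}, F((EC_2)_+,S)]^{C_2} \cong [S^{i-j}, F(P^\infty_j,S)] = \pi_{i-j}([P^\infty_j]^\vee)$---whereas you filter by $S(n\sigma)$, obtain $P^{-j-1}_{-j-n}$ at each finite stage, and then invoke Atiyah duality $[P^\infty_j]^\vee \simeq \Sigma P^{-j-1}_{-\infty}$ (which the paper cites later anyway) to close the loop.

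One small correction: your $\lim^1$ justification is backwards. The bottom cells of $P^{-j-1}_{-j-n}$ sit in degree $-j-n \to -\infty$, so connectivity buys you nothing; the fiber of each transition map is a sphere whose stable homotopy contributes in every degree. The clean fix is to run your identification at the level of mapping spectra rather than homotopy groups: the equivalences $\Map^{C_2}(S^{(i-j)+j\sigma}, Ca^n) \simeq \Map(S^{i-j}, (P^{j+n-1}_j)^\vee)$ assemble into an equivalence of towers, so the inverse limits agree as spectra and no $\lim^1$ analysis is required. This is in effect what the paper's unfiltered argument does.
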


\begin{proof}
The Segal conjecture implies that for a finite $C_2$-spectrum $Y$, the map
$$ Y \to Y^h = F((EC_2)_+, Y) $$
is a ($2$-adic) equivalence. 
Using the equivalence
$$ P^\infty_j \simeq (S^{j\sigma})_{hC_2}, $$
we have
\begin{align*}
\pi^{C_2}_{i,j} & = [S^{i-j} \wedge S^{j\sigma}, S]^{C_2} \\
& \cong [S^{i-j} \wedge S^{j\sigma}, F((EC_2)_+, S)]^{C_2} \\
& \cong [S^{i-j}, F((EC_2)_+ \wedge S^{j\sigma}, S)]^{C_2} \\
& \cong [S^{i-j}, F((EC_2)_+ \wedge_{C_2} S^{j\sigma}, S)] \\
& = \pi_{i-j}([P^\infty_j]^\vee).
\end{align*}
\end{proof}

Applying $\pi^{C_2}_{*,*}$ to the norm cofiber sequence
\begin{equation}\label{eq:normcofiber}
 (EC_2)_+ \to S^0 \to \td{EC}_2
\end{equation}
gives the long exact sequence
$$ \cdots \to \pi^s_{i-j+1} \to \lambda_{i,j} \to \pi^{C_2}_{i,j} \xto{\Phi^{C_2}} \pi^s_{j-i} \to \cdots $$
studied by
Landweber\footnote{Here, we have indexed $\pi_{i,j}$ and $\lambda_{i,j}$ with respect to our bigrading convention, not Landweber's.} \cite{Landweber}.
Using the equivalences
\begin{align*}
[P^\infty_j]^\vee & \simeq \Sigma P^{-j-1}_{-\infty}, && \text{\cite[Thm.~V.2.14 (iv) ]{Hinfty}} \\
S^{-1} & \simeq P^{\infty}_{-\infty}, && \text{\cite{Lin}}
\end{align*}
there is an isomorphism of long exact sequences
\begin{equation}\label{eq:LEScompare}
\xymatrix@C-1em{
\cdots \ar[r] & \pi^s_{i-j+1} \ar[r] \ar[d]^{\cong} & \lambda_{i,j} \ar[r]^{\cong} \ar[d]^\cong & \pi^{C_2}_{i,j} \ar[r]^{\Phi^{C_2}} \ar[d]^{\cong} & \pi^s_{j-i} \ar[r] \ar[d]^{\cong} & \cdots
\\
 \cdots \ar[r] & \pi_{i-j} P^\infty_{-\infty} \ar[r]  & \pi_{i-j}P^{\infty}_{-j} \ar[r] & \pi_{i-j-1}P^{-j-1}_{-\infty} \ar[r]  & \pi_{i-j-1} P^{\infty}_{-\infty} \ar[r] & \cdots. 
}
\end{equation}
where the bottom long exact sequence is the sequence obtained by applying $\pi_*$ to the cofiber sequence
$$ P_{-\infty}^{-j-1} \to P^\infty_{-\infty} \to P^{\infty}_{-j}. $$
By (\ref{eq:phidetails}), the geometric fixed points map is the $a$-localization map:
$$
\xymatrix{
\pi^{C_2}_{i,j}S \ar[r] \ar[dr]_{\Phi^{C_2}} & \pi^{C_2}_{i,j}S[a^{-1}] \ar[d]^{\cong} \\
& \pi_{i-j} S
}
$$
Thus the groups $\pi^{C_2}_{*,*}$ consist of $a$-torsion, and $a$-towers, where the latter are in bijective correspondence with the non-equivariant stable stems.  The generators of these $a$-towers correspond to the Mahowald invariants \cite{BrunerGreenlees}. 

As explained in \cite{DuggerIsaksenC2}, Landweber \cite{Landweber} uses James periodicity to show that the $a$-torsion in $\pi^{C_2}_{i,j}$ is periodic in the $j$ direction outside of a certain conic region.

\begin{thm}[Landweber]\label{thm:Landweber}
Define
\begin{equation}\label{eq:gamma}
\gamma(m) := \#\{k \: : \: 0 < k \le m, \: k \equiv 0,1,2,4 \mod 8\}
\end{equation}
Outside of the region
$$ j-1 \le i \le 2j $$
there are isomorphisms
$$ (\pi^{C_2}_{i,j})_{a-\mr{tors}} \cong (\pi^{C_2}_{i,j+2^{\gamma(i-1)}})_{a-\mr{tors}}. $$
\end{thm}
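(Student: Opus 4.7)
The plan is to transport the question across the identifications in diagram (\ref{eq:LEScompare}) and then apply classical James periodicity to finite stunted real projective spectra. By exactness, together with the identifications $\pi_{i-j}P^\infty_{-\infty} \cong \pi^s_{i-j+1}$ and $\pi_{i-j}P^\infty_{-j} \cong \lambda_{i,j}$ coming from the cofiber sequence $P^{-j-1}_{-\infty} \to P^\infty_{-\infty} \to P^\infty_{-j}$, the $a$-torsion $(\pi^{C_2}_{i,j})_{a-\mr{tors}} = \ker(\Phi^{C_2})$ is the cokernel of the induced map $\pi^s_{i-j+1} \to \lambda_{i,j}$. It therefore suffices to produce a natural isomorphism of this cokernel as $j$ is replaced by $j + 2^{\gamma(i-1)}$.

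Next the argument passes to finite stunted projective spectra. The cofiber sequence $P^N_{-j} \to P^\infty_{-j} \to P^\infty_{N+1}$, combined with the connectivity estimate $\pi_k P^\infty_{N+1} = 0$ for $k \le N$, implies that for $N \ge i - j + 1$ the natural map $\pi_{i-j} P^N_{-j} \xto{\cong} \pi_{i-j} P^\infty_{-j}$ is an isomorphism, and similarly after replacing $j$ by $j + M$. Classical James periodicity then enters: the canonical line bundle $\xi$ on $\RR P^{N+j}$ has order dividing $2^{\gamma(N+j)}$ in $\td{KO}(\RR P^{N+j})$, yielding a stable equivalence of Thom spectra
$$P^{N}_{-j} \simeq \Sigma^{M} P^{N - M}_{-j - M}, \qquad M = 2^{\gamma(N+j)},$$
and therefore an isomorphism $\pi_{i-j} P^N_{-j} \cong \pi_{i-j-M} P^{N-M}_{-j-M}$. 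Combining with the isomorphisms to $\pi_{*}P^\infty_{*}$ above, this delivers a periodicity on $\lambda_{i,j}$, and naturality of the equivalence (coming from the Thom isomorphism) transports the map from $\pi^s_{i-j+1}$, producing the desired isomorphism on cokernels.

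The main obstacle, and the source of the exceptional region $j - 1 \le i \le 2j$, is the joint constraint on the truncation parameter $N$: to realise the prescribed period $M = 2^{\gamma(i-1)}$ one needs $N + j \ge i + 1$ (so the truncation is faithful on $\pi_{i-j} P^\infty_{-j}$, and also on the shifted version) and simultaneously $\gamma(N + j) = \gamma(i - 1)$ (so that the James period matches). Since $\gamma$ is a non-decreasing step function which jumps precisely at $k \equiv 0, 1, 2, 4 \pmod 8$, such an $N$ exists only when $i$ and $j$ are sufficiently far apart, and the combinatorial analysis of when this fails picks out exactly the complement of $j - 1 \le i \le 2j$. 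An additional consistency check, also handled by naturality of the James equivalence, is that the image of the map from $\pi^s_{i-j+1}$ is preserved under the periodicity; this follows once one writes that map as the composite of a Thom-suspension with the periodicity isomorphism.
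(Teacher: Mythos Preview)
Your overall strategy—pass to stunted projective spectra via diagram~(\ref{eq:LEScompare}) and invoke James periodicity—matches the paper's, and your identification of $(\pi^{C_2}_{i,j})_{a\text{-tors}}$ with the cokernel of $\pi^s_{i-j+1}\to\lambda_{i,j}$ is correct. But the paper's argument is structured differently, and your version has a real gap in how the region $j-1\le i\le 2j$ is produced.

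The paper does \emph{not} derive the region from constraints on the truncation parameter $N$. Instead it uses two independent inputs: (1) outside the region $j-1\le i\le 2j$, the surjection $\lambda_{i,j}\to(\pi^{C_2}_{i,j})_{a\text{-tors}}$ is already an isomorphism (so there is no cokernel to track), and (2) Landweber's periodicity $\lambda_{i,j}\cong\lambda_{i,\,j+2^{\gamma(i-1)}}$, cited directly. The region is the locus where (1) holds; James periodicity itself is asserted for $\lambda$ without restriction.

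Your derivation of the region cannot be right as stated. The joint constraint you write down is ``$N+j\ge i+1$ and $\gamma(N+j)=\gamma(i-1)$.'' Setting $m=N+j$, this asks for an integer $m\ge i+1$ with $\gamma(m)=\gamma(i-1)$; since $\gamma$ is nondecreasing this forces $\gamma(i-1)=\gamma(i)=\gamma(i+1)$, a condition on $i\bmod 8$ alone with no dependence on $j$. It therefore cannot cut out the region $j-1\le i\le 2j$. What is missing is precisely step~(1): the observation that outside that region the map $\pi^s_{i-j+1}\to\lambda_{i,j}$ has trivial image, so that $\lambda_{i,j}$ \emph{is} the $a$-torsion and one never needs to pass to a cokernel or worry about naturality of the James equivalence with respect to the map from $P^\infty_{-\infty}$. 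Your naturality sketch for transporting the image of $\pi^s_{i-j+1}$ through the James equivalence is also underspecified (the equivalence lives on finite truncations, while the map comes from $P^\infty_{-\infty}\simeq S^{-1}$), but this becomes moot once you use~(1).
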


\begin{proof}
Outside of the region described, the map
$$ \lambda_{i,j} \to (\pi^{C_2}_{i,j})_{a-\mr{tors}} $$
is an isomorphism, and Landweber \cite[Thm.~2.4, Prop.~6.1]{Landweber} observed that James periodicity implies that there is an isomorphism
$$ \lambda_{i,j} \cong \lambda_{i,j+2^{\gamma(i-1)}}. $$
\end{proof}

Dugger-Isaksen prove the following theorem \cite{DuggerIsaksenC2}.\footnote{Belmont-Guillou-Isaksen \cite{BGI} have recently improved this isomorphism theorem to the region $i \ge 2j-4$.}

\begin{thm}[Dugger-Isaksen]\label{thm:DuggerIsaksen}
$C_2$-Betti realization induces an isomorphism
$$ \pi^\RR_{i,j}S^{0,0} \to \pi^{C_2}_{i,j}S^{0,0} $$
for $i \ge 3j-5$.
\end{thm}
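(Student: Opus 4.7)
The plan is to compare the two bigraded rings via compatible mod~$2$ Adams spectral sequences. Since $\Be^{C_2}(H\FF_2)_\RR \simeq H\ul{\FF_2}$, $C_2$-Betti realization induces a morphism from the $(H\FF_2)_\RR$-based Adams tower for $S^{0,0}_\RR$ (converging after $2$-completion to $\pi^\RR_{*,*}S^{0,0}$) to the $H\ul{\FF_2}$-based Adams tower for $S^{0,0}_{C_2}$ (converging after $2$-completion to $\pi^{C_2}_{*,*}S^{0,0}$). Convergence is available in both settings: Morel's work on the motivic side, and on the equivariant side the Greenlees $C_2$-Adams spectral sequence together with Lin's theorem (which ensures $H\ul{\FF_2}$-based completion agrees with $2$-completion on the $C_2$-sphere). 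The main task is to show that the induced map on $E_2$-pages is an isomorphism in the region $i \ge 3j-5$, and that this isomorphism is preserved on passage to $E_\infty$.

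For the $E_2$-level comparison, the coefficient rings differ only by the negative cone summand $\FF_2[u,a]/(u^\infty, a^\infty)\{\theta\}$, while the $\RR$-motivic dual Steenrod algebra maps into the $C_2$-equivariant dual Steenrod algebra sending its standard generators to their equivariant counterparts (with $\tau \mapsto u$ and $\rho \mapsto a$). A direct bigrading calculation shows that a generic negative cone class $\theta/(u^a a^b)$ with $a,b \ge 1$ sits at bidegree $(b, 2+a+b)$, so
$$ 3j - 5 - i = 3(2+a+b) - 5 - b = 1 + 3a + 2b \ge 6. $$
Hence the negative cone is entirely confined to the region $i < 3j-5$. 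Feeding this bidegree bookkeeping into a change-of-rings argument for $\Ext$ over the motivic and equivariant Steenrod algebras shows that Betti realization induces an isomorphism on Adams $E_2$-pages inside $i \ge 3j-5$.

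The main obstacle is controlling differentials and hidden extensions, so that the $E_2$-isomorphism descends to a $\pi_{*,*}$-isomorphism in the same range. One must rule out differentials with source in $i \ge 3j-5$ but target in $i < 3j-5$ (or vice versa) that could create discrepancies after Betti realization. The plan is to exploit two complementary tools. First, after inverting $\rho$ (equivalently $a$) both spectral sequences reduce to the classical mod~$2$ Adams spectral sequence by Bachmann's Theorem~\ref{thm:Rbetti}, so $\rho$-periodic differentials match tautologically. Second, in the $\rho$-torsion/$a$-torsion regime, Landweber's periodicity (Theorem~\ref{thm:Landweber}) constrains the distribution of torsion classes via James periodicity, and combined with the confinement of the negative cone established above, this forces any potentially problematic differentials to remain outside the isomorphism region. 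Assembling these ingredients yields the isomorphism in the full range $i \ge 3j - 5$.
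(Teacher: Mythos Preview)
The paper does not prove this theorem; it is quoted as a result of Dugger--Isaksen \cite{DuggerIsaksenC2} and used as a black box (with a footnote noting the improved range due to Belmont--Guillou--Isaksen). So there is no ``paper's own proof'' to compare against.

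That said, your sketch does follow the broad strategy of the original Dugger--Isaksen argument (comparing motivic and $C_2$-equivariant Adams spectral sequences), but it has real gaps. The $E_2$-comparison is not merely a matter of locating the negative cone in the coefficient ring: the $C_2$-equivariant dual Steenrod algebra has its own negative-cone summand, and one must track how \emph{all} of these extra classes propagate through the cobar complex to bound the bidegrees in which $\Ext$ can differ. Your single bidegree calculation for $\theta/(u^a a^b)$ handles the coefficients but not the comodule-algebra structure; the actual argument requires a careful filtration or change-of-rings analysis to show the discrepancy in $\Ext$ stays in the cone $i < 3j-5$.

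More seriously, your plan for controlling differentials is not a proof. Invoking Bachmann's theorem after inverting $\rho$ and Landweber periodicity on the $a$-torsion does not, by itself, preclude a $d_r$ with source in the isomorphism region and target outside it (or an incoming differential from outside killing a class inside). Dugger--Isaksen establish the needed boundary control by a more delicate argument about the vanishing region and the structure of the $\rho$-Bockstein/May spectral sequences; your two ``complementary tools'' do not substitute for that work. As written, the proposal is a reasonable outline of where the proof lives, but the decisive steps are asserted rather than argued.
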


\begin{figure}
\centering
\includegraphics[width=0.7\linewidth]{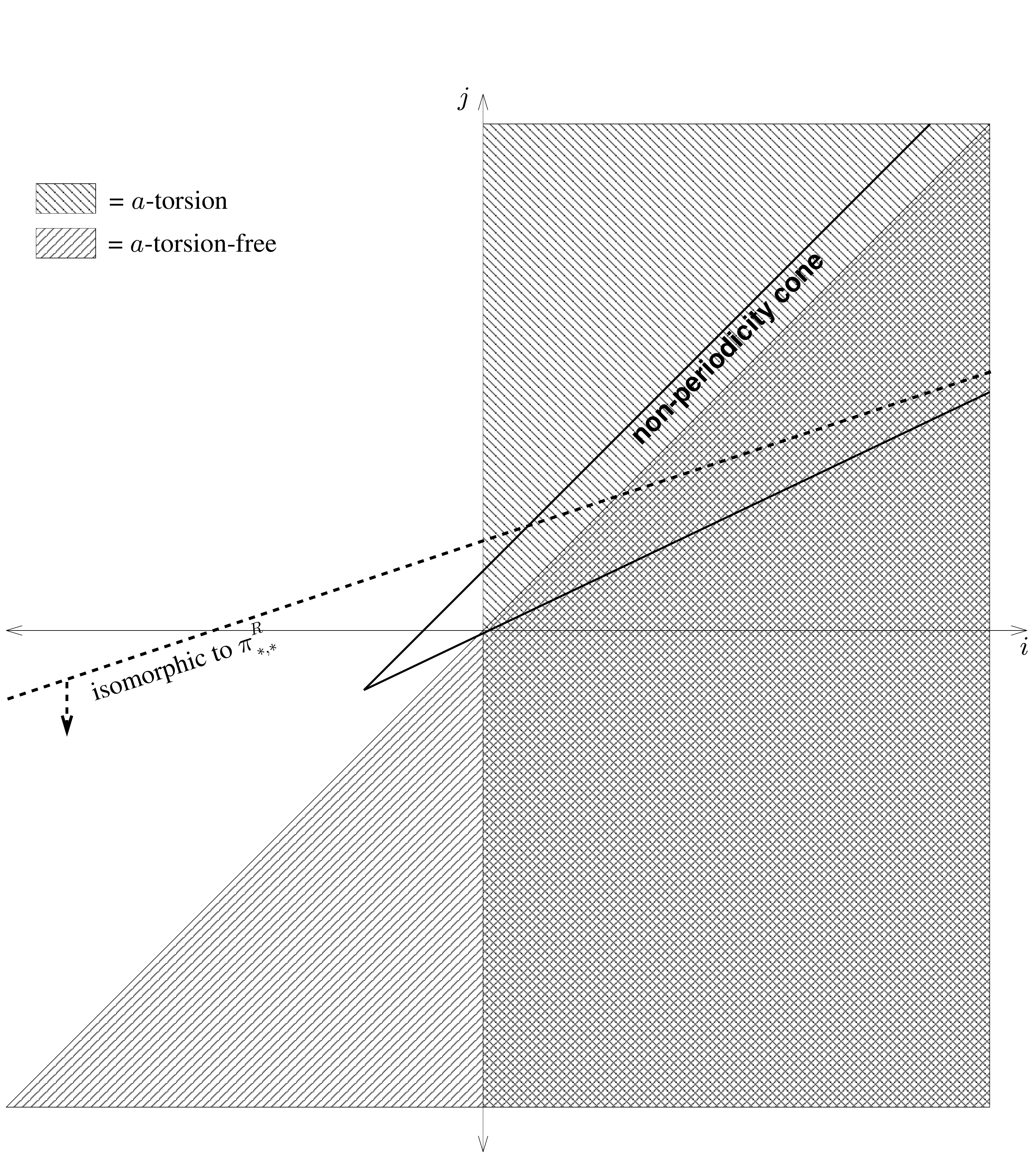}
\caption{The structure of $\pi^{C_2}_{i,j} S^{0,0}$.}
\label{fig:piC2}
\end{figure}

Figure~\ref{fig:piC2} depicts the location of the $a$-torsion and the $a$-towers in $\pi^{C_2}_{*,*}$.  The dashed line marks the region where Dugger-Isaksen proved these groups coincide with the groups $\pi^\RR_{*,*}$ (Theorem~\ref{thm:DuggerIsaksen}).  
This cone in Theorem~{\ref{thm:Landweber}} is labeled the ``non-periodicity cone'' in the figure.  Outside of this cone, the map
$$ \lambda_{i,j} \to (\pi^{C_2}_{i,j})_{a-\mr{tors}} $$
is an isomorphism. 

\subsection*{$\mbf{u}$-self maps}

Since the $C_2$-spectra $S^{1,0}$ and $S^{1,1}$ are non-equivariantly equivalent, the equivalence (\ref{eq:CaC2}) results in a self-equivalence
$$ u: \Sigma^{0,-1} Ca \to Ca. $$
We denote this map $u$, and shall refer to it as a \emph{$u$-self map}, because it induces the multiplication by $u$ map on the homology groups
$$ (H\ul{\FF_2})_{*,*}(Ca) \cong \FF_2[u^{\pm}]. $$

We invite the reader to think of a $u$-self map as analogous to the $v_n$-self maps of chromatic homotopy theory \cite{Ravenel}.  For instance, the mod $2^i$ Moore spectrum admits a $v_1^j$-self map for certain values of $j$ which depend on $i$.  We have the following analog in the present situation.

\begin{thm}\label{thm:uselfmap}
The $C_2$-spectrum $Ca^i$ admits a $u$-self map
$$ u_{2^{\gamma(i-1)}}: \Sigma^{0,-2^{\gamma(i-1)}} Ca^{i} \to Ca^i $$
and this map is an equivalence.
\end{thm}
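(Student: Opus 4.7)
The plan is to produce the self-equivalence of $Ca^i$ geometrically by upgrading the classical James periodicity for stunted real projective spaces to the $C_2$-equivariant setting, and then to identify the resulting map as a $u$-self map by a Bredon cohomology calculation.

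Starting from the identification $Ca^i \simeq \Sigma^{1-i\sigma}S(i\sigma)_+$ of (\ref{eq:Caiequiv}), producing a self-equivalence $\Sigma^{0,-k} Ca^i \simeq Ca^i$ with $k = 2^{\gamma(i-1)}$ reduces to producing a $C_2$-equivariant stable equivalence
$$ S(i\sigma)_+ \wedge S^{k - k\sigma} \xto{\simeq} S(i\sigma)_+, $$
which one may regard as an \emph{equivariant James periodicity} statement. To construct it, I would observe that $C_2$ acts freely on $S(i\sigma)$, so equivariant real $K$-theory satisfies $\widetilde{KO}^{C_2}(S(i\sigma)) \cong \widetilde{KO}(\RR P^{i-1})$, and under this identification the class $[\sigma] - 1$ corresponds to $[\xi] - 1$ for $\xi$ the tautological line bundle. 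Adams's theorem then gives $\widetilde{KO}(\RR P^{i-1}) \cong \ZZ/2^{\gamma(i-1)}$, generated by $[\xi] - 1$, so that $k \cdot ([\sigma] - 1) = 0$ equivariantly. Lifting this $K$-theoretic relation produces a stable $C_2$-equivariant trivialization of the virtual bundle $k\sigma - k\ul{\RR}$ over $S(i\sigma)$, and passing to Thom spectra yields the desired equivalence.

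Transporting back along $Ca^i \simeq \Sigma^{1-i\sigma} S(i\sigma)_+$, this produces a map $u_k \colon \Sigma^{0,-k} Ca^i \to Ca^i$ which is an equivalence by construction. To verify that $u_k$ is a $u$-self map in the sense of multiplying by $u^k$ on $(H\ul{\FF_2})_{*,*}$, I would compute that the cell structure of $S(i\sigma)_+$ together with $\pi^{C_2}_{*,*} H\ul{\FF_2}$ presents $(H\ul{\FF_2})_{*,*}(Ca^i)$ as essentially a rank-one module over the relevant quotient, so that by bigrading considerations any self-equivalence of bidegree $(0, -k)$ acts by a unit times $u^k$; rescaling by this unit in $\pi^{C_2}_{0,0}$ yields the asserted $u^{2^{\gamma(i-1)}}$-self map.

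The main obstacle I expect is the equivariant refinement of James periodicity, i.e., promoting the $KO$-theoretic relation over $\RR P^{i-1}$ to an honest $C_2$-equivariant stable bundle trivialization of $k\sigma - k\ul{\RR}$ over $S(i\sigma)$ realizing a Thom-spectrum equivalence; the identification of $K$-groups via freeness of the action is clean, but one must ensure the lift from a $K$-theoretic relation to a stable equivariant bundle isomorphism holds in a sufficient range, and ultimately this is the same input which gives rise to Landweber's periodicity in Theorem~\ref{thm:Landweber}.
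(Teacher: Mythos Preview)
Your approach is correct but packages the argument differently from the paper. The paper uses the Adams isomorphism to identify $\pi^{C_2}_{0,-k}Ca^i \cong \pi_k \Sigma P^{k-1}_{k-i}$ (for $k = 2^{\gamma(i-1)}$), invokes the $E_\infty$-ring structure on $Ca^i$ (Lemma~\ref{lem:ring}) so that it suffices to produce a single homotopy element with the correct Hurewicz image, and then cites the classical reducibility of $P^{k-1}_{k-i}$; the resulting self-map is an equivalence because it induces an $(H\ul{\FF_2})_{*,*}$-isomorphism. You instead construct the equivalence geometrically as a Thom equivalence over $S(i\sigma)$ and then read off the $u$-self map property from bigrading. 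Both routes ultimately rest on Adams's computation of $\widetilde{KO}(\RR P^{i-1})$, but the paper phrases this as ``reducibility of a stunted projective space'' while you phrase it as ``stable triviality of $k(\sigma-1)$ over $S(i\sigma)$''. Your version avoids the ring structure and yields the equivalence directly; the paper's version reduces cleanly to a citable nonequivariant fact.

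Your stated ``main obstacle'' is not one: since $C_2$ acts freely on $S(i\sigma)$, $C_2$-vector bundles over it are equivalent to ordinary bundles over $\RR P^{i-1}$, with the constant bundle $\sigma$ corresponding to $\xi$. Hence $k([\xi]-1)=0$ in $\widetilde{KO}(\RR P^{i-1})$ already \emph{is} an equivariant stable bundle isomorphism $k\sigma \cong k\ul{\RR}$ over $S(i\sigma)$, and the Thom-spectrum equivalence follows with no further lift. Also, no rescaling is needed in your last step: $(H\ul{\FF_2})_{0,-k}(Ca^i) = \FF_2\{u^k\}$, so a nonzero effect there is automatically $u^k$.
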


To prove Theorem~\ref{thm:uselfmap} (and the forthcoming Theorem~\ref{thm:tauselfmap}) we shall need the following lemma.

\begin{lem}\label{lem:ring}
The spectra $C\rho^i \in \SH(\RR)$ and $Ca^i \in \Sp^{C_2}$ are $E_\infty$-ring spectra.
\end{lem}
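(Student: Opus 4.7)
The plan is to realize $C(\rho^i)$ and $C(a^i)$ as instances of the completion-tower spectra $C(\iota^i)$ from Section~\ref{sec:localization-of-symmetric-monoidal-infty-categories-with-respect-to-a-commutative-algebra}, where $\iota \colon I \to 1$ is the fiber of the unit $1 \to A$ of some dualizable $E_\infty$-algebra $A$. Once this identification is achieved, the $E_\infty$-structure on $C(\iota^i)$ is automatic, since $C(\iota^i) \simeq \Tot_{i-1} \CB^\bullet(A)$ is a finite limit of a cosimplicial diagram in $\CAlg$.

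For the $C_2$-equivariant case, the plan is to take $A := \Sigma^\infty_+ C_2 \in \Sp^{C_2}$. This is an $E_\infty$-algebra: via the Wirthm\"uller equivalence, $A \simeq \mathrm{CoInd}_e^{C_2} S^0$ inherits its $E_\infty$-structure from the lax symmetric monoidal right adjoint $\mathrm{CoInd}_e^{C_2}$ to the strong symmetric monoidal functor $\Res_e^{C_2}$, and it is dualizable by self-duality. To match the fiber $\iota$ of the unit $S^0 \to A$ with $a$, I would Spanier-Whitehead dualize the cofiber sequence (\ref{eq:cofiber}) to obtain $S^{-\sigma} \xrightarrow{a} S^0 \to \Sigma^\infty_+ C_2$. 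Thus $C(a^i) \simeq C(\iota^i)$ acquires the desired $E_\infty$-structure.

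For the $\RR$-motivic case (working $2$-completely, per the running convention), the plan is to take $A := \zeta_* \zeta^* S \simeq \Sigma^\infty_+ \Spec(\CC) \in \SH(\RR)$. This is $E_\infty$ because $\zeta_*$ is lax symmetric monoidal as the right adjoint to the strong symmetric monoidal $\zeta^*$, and it is dualizable because $\zeta$ is finite \'etale. By Proposition~\ref{prop:cofiberRho}, the canonical map $C(\rho) \to A$ is an equivalence in $\SH_{\cell}(\RR)^\wedge_2$, so taking fibers of the unit maps $S \to C(\rho)$ and $S \to A$ identifies $\iota$ with $\rho$ in the cellular $2$-complete category. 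Hence $C(\rho^i)^\wedge_2 \simeq C(\iota^i)$ is $E_\infty$, and since the inclusion $\SH_{\cell}(\RR)^\wedge_2 \hookrightarrow \SH(\RR)^\wedge_2$ is strong symmetric monoidal, this endows $C(\rho^i)$ with an $E_\infty$-structure in $\SH(\RR)$.

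The main obstacle I foresee is the cellular-versus-noncellular distinction in the motivic case: since $\Sigma^\infty_+ \Spec(\CC)$ is not cellular, the identification of $C(\rho^i)$ with $C(\iota^i)$ only takes place after cellularization and $2$-completion, and one must verify that the comparison map from Proposition~\ref{prop:cofiberRho} is compatible with the unit maps, so that the fiber identification $\iota = \rho$ genuinely holds in a pointed sense. The $C_2$-equivariant case presents no such difficulty, since $\Sigma^\infty_+ C_2$ is itself a finite cell spectrum.
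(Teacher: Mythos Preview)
Your approach is correct and aligns closely with the paper's. The paper likewise invokes the completion-tower description $C(\iota^{n+1}) \simeq \Tot_n \CB^\bullet(A)$ from Section~\ref{sec:localization-of-symmetric-monoidal-infty-categories-with-respect-to-a-commutative-algebra}: for the motivic case it points to Remark~\ref{rmk:CrhoEoo}, which uses Prop.~\ref{prop:cofiberRho} together with the lax monoidality of $\Cell$ to endow $C(\rho)^\wedge_p$ (and hence $C(\rho^n)^\wedge_p$) with an $E_\infty$-structure in $\SH(\RR)^\wedge_p$ --- exactly your argument with $A = \Spec(\CC)_+$ cellularized. The obstacle you flag about compatibility of unit maps is handled by the construction in Prop.~\ref{prop:cofiberRho}: the comparison $\alpha\colon C(\rho) \to \Spec(\CC)_+$ is built there so that the composite $S^{0,0} \to C(\rho) \xrightarrow{\alpha} \Spec(\CC)_+$ is the unit $\xi$, hence the induced $E_\infty$-unit on $C(\rho)^\wedge_p$ is the standard cofiber map and its fiber is $\rho$.

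The one genuine difference is in the equivariant case. The paper simply observes that $C_2$-Betti realization is strong symmetric monoidal and applies it to the motivic result, obtaining the $E_\infty$-structure on $C(a^i)$ in one line. Your direct route via $A = \Sigma^\infty_+ C_2 \simeq \mathrm{CoInd}_e^{C_2} S^0$ is independently valid and has the minor advantage of not requiring $p$-completion on the equivariant side; however, it entails the additional check that the unit $S^0 \to \mathrm{CoInd}_e^{C_2} S^0$ (under the Wirthm\"uller identification) has fiber $a\colon S^{-\sigma} \to S^0$, which your dualization of (\ref{eq:cofiber}) does not quite establish --- dualizing gives the \emph{transfer} $S^0 \to \Sigma^\infty_+ C_2$, and one must still match this with the coinduction unit. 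This is true (both are generators of $[S^0, C_{2+}]^{C_2} \cong \ZZ$), but deserves a sentence. The paper's route sidesteps this by transporting along $\Be^{C_2}$, where the identification $\Be^{C_2}(\rho) = a$ is immediate.
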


\begin{proof}
The case of $C\rho^i$ is explained in Remark~\ref{rmk:CrhoEoo}.  The case of $Ca^i$ follows from the fact that $C_2$-Betti realization is monoidal.
\end{proof}



\begin{proof}[Proof of Theorem~\ref{thm:uselfmap}]
Using the equivalence~\ref{eq:Caiequiv} and the Adams isomorphism, we have
\begin{align*} 
\pi_{k,l}^{C_2}Ca^i 
& = [S^{k-l+l\sigma}, \Sigma^{1-i\sigma}S(i\sigma)_+]^{C_2} \\
& \cong [S^{k-l}, \Sigma S(i\sigma)_+ \wedge S^{(-l-i)\sigma}]^{C_2} \\
& \cong [S^{k-l}, \Sigma S(i\sigma)_+ \wedge_{C_2} S^{(-l-i)\sigma}] \\
& \cong \pi_{k-l} \Sigma P_{-l-i}^{-l-1}
\end{align*}
and a similar argument yields 
\begin{equation}\label{eq:HCa^i}
(H\ul{\FF_2})_{k,l}Ca^i \cong (H\FF_2)_{k-l}\Sigma P^{-l-1}_{-l-i}.
\end{equation}
It follows that
$$ (H\ul{\FF_2})_{*,*}Ca^i \cong \FF_2[u^{\pm}, a]/(a^i) $$
where, under the isomorphism (\ref{eq:HCa^i}), the monomial $u^sa^t$ is the homology class coming from the $(s-1)$-cell of $P^{s+t-1}_{s+t-i}$.

By Lemma~\ref{lem:ring}, to prove the theorem it suffices to prove that there is an element
$$ u_{2^{\gamma(i-1)}} \in \pi^{C_2}_{0, -2^{\gamma(i-1)}} Ca^i $$
whose Hurewicz image is
$$ u^{2^{\gamma(i-1)}} \in H^{C_2}_{0,-2^{\gamma(i-1)}}Ca^i. $$
Using the commutative diagram
$$
\xymatrix{
\pi^{C_2}_{0,-2^{\gamma(i-1)}}Ca^i \ar[r]^-{\cong} \ar[d] &
\pi_{2^{\gamma(i-1)}} \Sigma P^{2^{\gamma(i-1)}-1}_{2^{\gamma(i-1)}-i} \ar[d] \\
H^{C_2}_{0,-2^{\gamma(i-1)}} Ca^i \ar[r]_-{\cong} &
H_{2^{\gamma(i-1)}}\Sigma P^{2^{\gamma(i-1)}-1}_{2^{\gamma(i-1)}-i}
}
$$
relating equivariant and non-equivariant Hurewicz homomorphisms, the result follows from the fact (see \cite[Thm.~V.2.14(v)]{Hinfty}) that $P^{2^{\gamma(i-1)}-1}_{2^{\gamma(i-1)}-i}$ is reducible.

The resulting self-map $u_{2^{\gamma(i-1)}}$ induces multiplication by $u^{2^{\gamma(i-1)}}$ on homology, and therefore is a homology isomorphism, and hence is a equivalence.
\end{proof}

Note that we make no claims that these $u$-self maps have any uniqueness or compatibility properties.

\subsection*{$\pmb{\tau}$-self maps}

\begin{thm}\label{thm:tauselfmap}
The $\RR$-motivic spectrum $C\rho^i$ admits a $\tau$-self map
$$ \tau_{2^{\gamma(i-1)}}: \Sigma^{0,-2^{\gamma(i-1)}} C\rho^{i} \to C\rho^i. $$
\end{thm}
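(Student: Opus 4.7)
The plan is to lift the $u$-self map on $Ca^i$ from Theorem~\ref{thm:uselfmap} to a $\tau$-self map on $C\rho^i$ by means of the Dugger--Isaksen comparison theorem (Theorem~\ref{thm:DuggerIsaksen}). Since both $C\rho^i$ and $Ca^i$ are $E_\infty$-ring spectra (Lemma~\ref{lem:ring}) and $C_2$-Betti realization is symmetric monoidal with $\Be^{C_2}(\rho) = a$, multiplication by any class $\tau_{2^{\gamma(i-1)}} \in \pi^{\RR}_{0,-2^{\gamma(i-1)}}(C\rho^i)$ whose Betti realization is $u_{2^{\gamma(i-1)}}$ automatically defines a self-map of the required form whose Betti realization is the $u$-self map on $Ca^i$. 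It therefore suffices to produce such a lift on the level of bigraded homotopy.

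To do this, I would compare the cofiber sequence $S^{-i,-i} \xrightarrow{\rho^i} S^{0,0} \to C\rho^i$ in $\SH(\RR)$ with its Betti realization $S^{-i,-i} \xrightarrow{a^i} S^{0,0} \to Ca^i$ in $\Sp^{C_2}$. Applying $\pi^{?}_{0,-2^{\gamma(i-1)}}$ in each case and using that $\pi^{?}_{k,l}(S^{-i,-i}) = \pi^{?}_{k+i,l+i}(S^{0,0})$ gives a ladder of five-term exact sequences whose sphere entries sit in the bidegrees $(i,i-2^{\gamma(i-1)})$, $(0,-2^{\gamma(i-1)})$, $(i-1,i-2^{\gamma(i-1)})$, and $(-1,-2^{\gamma(i-1)})$. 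Theorem~\ref{thm:DuggerIsaksen} asserts that Betti realization is an isomorphism on $\pi_{k,l}(S^{0,0})$ whenever $k \geq 3l-5$; for the two bidegrees with $l \leq 0$ this is automatic, and for the two remaining bidegrees it reduces to the numerical inequality
\[ 3 \cdot 2^{\gamma(i-1)} + 4 \geq 2i. \]

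That inequality is the main point I would need to verify. Because $\gamma(m)$ counts integers in $\{1,\dots,m\}$ lying in four of the eight residue classes mod $8$, it satisfies $\gamma(m) \geq m/2 - O(1)$, so $2^{\gamma(i-1)}$ grows exponentially in $i$ while $2i$ grows linearly; the inequality therefore holds for all sufficiently large $i$ and can be checked by direct computation for the few remaining small values. Given this, the five-lemma applied to the ladder yields an isomorphism
\[ \Be^{C_2} \colon \pi^{\RR}_{0,-2^{\gamma(i-1)}}(C\rho^i) \xrightarrow{\;\cong\;} \pi^{C_2}_{0,-2^{\gamma(i-1)}}(Ca^i), \]
and I define $\tau_{2^{\gamma(i-1)}}$ to be the preimage of $u_{2^{\gamma(i-1)}}$; multiplication by this class via the $E_\infty$-structure on $C\rho^i$ produces the desired self-map. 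The essential obstacle is the numerical verification above; once that is in hand, the structural inputs (Dugger--Isaksen, the ring structure on $C\rho^i$, and monoidality of $\Be^{C_2}$) assemble the argument with no further work.
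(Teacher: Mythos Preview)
Your proposal is correct and follows essentially the same route as the paper: reduce via the ring structure (Lemma~\ref{lem:ring}) to producing a class in $\pi^{\RR}_{0,-2^{\gamma(i-1)}}C\rho^i$, compare the long exact sequences of the cofiber sequences for $\rho^i$ and $a^i$, invoke Dugger--Isaksen (Theorem~\ref{thm:DuggerIsaksen}) on the four sphere terms, and conclude by the five-lemma. You are in fact more explicit than the paper about the numerical check $3\cdot 2^{\gamma(i-1)}+4 \geq 2i$, which the paper leaves implicit; since $\gamma(m) \geq \lfloor m/2 \rfloor$ this holds for all $i \geq 1$, and your sketch of that verification is fine.
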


\begin{proof}
By Lemma~\ref{lem:ring}, it suffices to prove that there is an element
$$ \tau_{2^{\gamma(i-1)}} \in \pi^{C_2}_{0, -2^{\gamma(i-1)}} C\rho^i $$
whose Hurewicz image is
$$ \tau^{2^{\gamma(i-1)}} \in (H\ul{\FF_2})_{0,-2^{\gamma(i-1)}}C\rho^i \cong \FF_2[\tau, \rho]/(\rho^i). $$
By Theorem~\ref{thm:DuggerIsaksen}, there are isomorphisms in the map of 
long exact sequences
$$
\xymatrix@C-1em{ 
\pi^{\RR}_{i,i-2^{\gamma(i-1)}} \ar[r]_-{\rho^i} \ar[d]_{\cong} & \pi^{\RR}_{0,-2^{\gamma(i-1)}} \ar[r] \ar[d]_\cong & \pi^{\RR}_{0,-2^{\gamma(i-1)}} C\rho^i \ar[r] \ar[d] & \pi^{\RR}_{i-1,i-2^{\gamma(i-1)}} \ar[r]_-{\rho^{i}} \ar[d]_\cong & 
\pi^{\RR}_{-1,-2^{\gamma(i-1)}}\ar[d]_\cong & 
\\
\pi^{C_2}_{i,i-2^{\gamma(i-1)}} \ar[r]^-{a^i}  & \pi^{C_2}_{0,-2^{\gamma(i-1)}} \ar[r] & 
\pi^{C_2}_{0,-2^{\gamma(i-1)}} Ca^i \ar[r] & \pi^{C_2}_{i-1,i-2^{\gamma(i-1)}} \ar[r]^-{a^{i}} & \cdots
\pi^{C_2}_{-1,-2^{\gamma(i-1)}} & 
}
$$
which, by the $5$-lemma, allow us to deduce that there is an isomorphism
$$ \pi^{\RR}_{0,-2^{\gamma(i-1)}} C\rho^i \xto{\cong} 
\pi^{C_2}_{0,-2^{\gamma(i-1)}} Ca^i.$$
The desired element $\tau_{2^{\gamma(i-1)}}$ can be taken to be an element which corresponds, under this isomorphism, to the element $u_{2^{\gamma(i-1)}}$ of Theorem~\ref{thm:uselfmap}.
\end{proof}

\subsection*{$\tau$-self maps at an odd prime}

\emph{In this subsection, everything is implicitly $p$-complete for a fixed odd prime $p$.}

Consider the homotopy complete ($p$-complete) $C_2$-equivariant sphere $S^h$.  We have
\begin{align*}
\pi^{C_2}_{0,k}S^h & = [S^{k\sigma-k}, F((EC_2)_+,S)]^{C_2} \\
& \cong [(EC_2)_+ \wedge S^{k\sigma}, S^{k}]^{C_2} \\
& \cong [(EC_2)_+ \wedge_{C_2} S^{k\sigma}, S^{k}] \\
& \cong [P^\infty_{k}, S^{k}] \\
& \cong 
\begin{cases}
\ZZ_p, & \text{$k$ even}, \\
0, & \text{$k$ odd}
\end{cases}
\end{align*}
where the last isomorphism comes from the fact that $P^\infty_{k}$ is $p$-adically contractible if $k$ is odd, and inclusion of the bottom cell
$$ S^{k} \hookrightarrow P^\infty_{k} $$
is a $p$-adic equivalence if $k$ is even.  Define $u^2$ to be a generator of $\pi^{C_2}_{0,-2}S^h$.  Then the above calculation implies that 
$$ \pi^{C_2}_{0,*}S^h \cong \ZZ_p[u^{\pm 2}]. $$
Thus the homotopy groups of any $p$-complete homotopy complete $C_2$-equivariant spectrum are $u^2$-periodic.

\begin{prop}\label{prop:tauodd}
We have
$$ \pi^\RR_{0,*}S^\wedge_{\rho} \cong \ZZ_p[\tau^2] $$
and every ($p$-complete) $\rho$-complete real motivic spectrum has a $\tau^2$-self map.
Moreover, we have
$$ \widehat\Be{}_p^{C_2}(\tau^2) = u^2. $$
\end{prop}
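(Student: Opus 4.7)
The plan is to first establish the homotopy group computation $\pi^\RR_{0,*}S^\wedge_\rho \cong \ZZ_p[\tau^2]$; the remaining two claims will follow as formal consequences.

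For the main computation, I would invoke the odd-primary motivic Adams spectral sequence over $\RR$ of Stahn \cite{Stahn}. Since $\rho$ is $2$-torsion, it vanishes in the mod $p$ motivic cohomology, giving $\pi^\RR_{*,*}(H\FF_p)_\RR \cong \FF_p[\tau^2]$ as recorded in Section \ref{sec:background}; the mod $p$ motivic Steenrod algebra over $\RR$ is then obtainable from its $\CC$-counterpart via the assignment $\tau \mapsto \tau^2$. Running the motivic Adams spectral sequence for the $(p,\rho)$-complete sphere, Stahn's Ext computation gives an $E_2$-page whose restriction to the weight-indexed $0$-stem in filtration $0$ is $\FF_p[\tau^2]$, with no contribution from positive Adams filtration in stem $0$ and no extension problems; lifting from $\FF_p$ to $\ZZ_p$-coefficients produces the asserted description of $\pi^\RR_{0,*}S^\wedge_\rho$. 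A sanity check: by the splitting (\ref{eq:rhosplitting}) and Bachmann's theorem (Theorem \ref{thm:Rbetti}), $\pi^\RR_{0,k}S$ decomposes as $\pi_{-k}S \oplus \pi^\RR_{0,k}S^\wedge_\rho$, and the second summand $\ZZ_p[\tau^2]$ is concentrated in non-positive even weights, consistent with the usual $(p)$-complete computations of $\pi^\RR_{0,*}S$.

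Given this computation, the existence of $\tau^2$-self maps on every $\rho$-complete spectrum is immediate: $\rho$-completion is a symmetric monoidal localization by the discussion surrounding Example \ref{ex:Cx}, so any $X \in \SH(\RR)^{\wedge}_{p,\rho}$ is a module over the $E_\infty$-ring $S^\wedge_\rho$, and multiplication by $\tau^2 \in \pi^\RR_{0,-2}S^\wedge_\rho$ furnishes the desired self-map $\Sigma^{0,-2}X \to X$. For the third claim, $C_2$-Betti realization is a symmetric monoidal functor sending $\rho \mapsto a$, so by (\ref{eq:acompletion}) and the compatibility of the splittings (\ref{eq:rhosplitting}) and (\ref{eq:asplitting}) observed in Section \ref{sec:background}, it carries $S^\wedge_\rho$ to $S^h$ and induces a ring homomorphism $\widehat\Be{}_p^{C_2}: \pi^\RR_{0,*}S^\wedge_\rho \to \pi^{C_2}_{0,*}S^h \cong \ZZ_p[u^{\pm 2}]$. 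I would then verify via the Hurewicz map to mod $p$ cohomology that $\widehat\Be{}_p^{C_2}(\tau^2)$ is a unit multiple of $u^2$: the motivic class $\tau^2$ has Hurewicz image $\tau^2 \in \pi^\RR_{0,-2}(H\FF_p)_\RR$, which maps under Betti realization to the nonzero class $u^2 \in \pi^{C_2}_{0,-2}H\ul{\FF_p}$ per the identification in Section \ref{sec:background}. Normalizing the generator $\tau^2$ of $\pi^\RR_{0,-2}S^\wedge_\rho$ appropriately then yields the stated equality.

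The hard part will be the Stahn computation itself, particularly confirming that in the weight-graded $0$-stem of the $E_\infty$-page there are no contributions from positive Adams filtration and no hidden additive or multiplicative extensions in the answer. Once those issues are settled, the subsequent formal steps reduce cleanly to module-over-$S^\wedge_\rho$ manipulations and Hurewicz comparisons already tracked in the preceding sections.
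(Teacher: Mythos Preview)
Your proposal has the right overall architecture, and your treatment of the second and third claims (existence of $\tau^2$-self maps via the module structure over $S^{\wedge}_\rho$, and the Hurewicz comparison for $\widehat\Be{}_p^{C_2}(\tau^2)$) matches the paper's essentially verbatim. The difference lies in the first claim.

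The paper does not run the mod $p$ motivic Adams spectral sequence; it runs the motivic \emph{Adams--Novikov} spectral sequence based on Stahn's $BPGL$, with $\pi^\RR_{*,*}BPGL = \ZZ_p[\tau^2, v_1, v_2, \ldots]$. In that spectral sequence the $0$-stem really is concentrated in filtration $0$, and the answer $\ZZ_p[\tau^2]$ can be read off directly once one uses Stahn's recipe relating the motivic and classical ANSS together with Proposition~\ref{prop:eta} to pass from $\eta$-completion to $\rho$-completion. Your Adams spectral sequence approach is also viable, but your stated expectation is wrong: in the mod $p$ Adams spectral sequence the $0$-stem is \emph{not} concentrated in filtration $0$ --- there is the usual $a_0$-tower in every even weight, detecting the $p$-adic filtration on $\ZZ_p$. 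So rather than ``no contribution from positive Adams filtration,'' you would need to identify that tower and resolve the additive extensions to assemble $\ZZ_p$ in each weight. This is routine, but it is precisely the subtlety the paper's ANSS argument sidesteps. You should also make explicit, as the paper does via Proposition~\ref{prop:eta} and \cite{HuKrizOrmsby}, that the convergence target of the motivic ASS is the $(p,\eta)$-completion, and that this agrees with the $(p,\rho)$-completion at odd primes.
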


\begin{proof}
Let $BPGL$ be the odd primary real motivic Brown-Peterson spectrum constructed in \cite{Stahn}. By \cite[Prop.~2.5]{Stahn}, we have
$$ \pi^\RR_{*,*}BPGL = \ZZ_p[\tau^2, v_1, v_2, \cdots ] $$
with $\abs{v_i} = (2p^i-2, p^i-1)$.  
Consider the associated ($p$-complete) real motivic Adams-Novikov spectral sequence\footnote{For convergence, the argument of \cite[Sec.~8]{DuggerIsaksenMASS} shows that this spectral sequence converges to the $(H\FF_p)_\RR$-completion of the motivic sphere spectrum, which by \cite{HuKrizOrmsby}, is the $(p,\eta)$-completion.}
$$ \Ext_{BPGL_{*,*}BPGL}(BPGL_{*,*}, BPGL_{*,*}) \Rightarrow \pi^\RR_{*,*}S^{\wedge}_{\eta}. $$
Stahn \cite{Stahn} explains the odd primary analog of a recipe of Dugger-Isaksen \cite{DuggerIsaksenMASS}, which allows one to completely construct the motivic Adams-Novikov spectral sequence from the classical Adams-Novikov spectral sequence.  In particular, using Proposition~\ref{prop:eta}, we are able to deduce the first statement.  The second statement follows by considering the composite (arising from the Hurewicz homomorphism, the map of \cite{Hoyois}, and Betti realization)
$$ \ZZ_p[\tau^2] = \pi^\RR_{0,*} S^\wedge_{\rho} \rightarrow \pi^\RR_{0,*} BPGL \rightarrow \pi^\RR_{0,*} (H\FF_p)_\RR \rightarrow \pi^{C_2}_{0,*} H\ul{\FF_p} = \FF_p[u^{\pm 2}]. $$
Thm.~4.18 of \cite{HellerOrmsby} implies that $C_2$-Betti realization maps $\tau^2$ to $u^2$.  We deduce that
$$ \widehat\Be{}_p^{C_2}(\tau^2) = \lambda u^2 $$
with $\lambda \in \ZZ_p^\times$.  Without loss of generality, we may choose the generator $\tau^2$ so that $\lambda = 1$. 
\end{proof}

\section{The equivariant-motivic situation}\label{sec:the-equivariant-motivic-situation}

\subsection*{The monoidal Barr-Beck theorem for etale base change}

For a subgroup $H \leq G$, the restriction-induction adjunction
\[ \adjunct{\Res^G_H}{\Sp^G}{\Sp^H}{\Ind^G_H} \]
satisfies the hypotheses of Thm. \ref{thm:monoidalBarrBeck} (c.f. \cite[Thm. 5.32]{MATHEW2017994}). 

Let $\zeta$ denote the map 
$$\zeta: \Spec \CC \to \Spec \RR$$
and consider the induced adjunction
\[ \adjunct{\zeta^\ast}{\SH(\RR)}{\SH(\CC)}{\zeta_\ast}. \]
Note that since the adjunction $\zeta^{\ast} \dashv \zeta_{\ast}$ is monoidal, we have
$$ \Spec \CC_+ = \zeta_* 1 \in \CAlg(\SH(\RR)). $$
With the adjunction $\zeta^{\ast} \dashv \zeta_{\ast}$ being our main situation of interest, we now make the analogous observation in the motivic context.

\begin{prop} \label{prop:EtaleMonoidalBarrBeck} If $f: T \to S$ is finite etale, then $f^{\ast} \dashv f_{\ast}$ satisfies the hypotheses of Thm. \ref{thm:monoidalBarrBeck}, and we have
$$ \SH(T) \simeq \Mod_{\SH(S)}(f_*1). $$
\end{prop}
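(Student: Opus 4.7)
The plan is to verify the three hypotheses of the monoidal Barr-Beck theorem (Thm.~\ref{thm:monoidalBarrBeck}) for the adjunction $f^\ast \dashv f_\ast$; granted these, the desired equivalence $\SH(T) \simeq \Mod_{\SH(S)}(f_\ast 1)$ is immediate from that theorem.

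Two of the three conditions are essentially formal from facts recalled in Section~\ref{sec:background}. Since $f$ is finite etale, it is smooth and proper with trivial sheaf of differentials, so the duality equivalence $f_\ast \simeq f_\sharp \Sigma^{-\Omega_f}$ degenerates to the ambidexterity $f_\ast \simeq f_\sharp$; as $f_\sharp$ is a left adjoint (to $f^\ast$), it follows that $f_\ast$ preserves all colimits. Similarly, properness of $f$ yields $f_\ast \simeq f_!$, so the projection formula $f_!(X \wedge f^\ast Y) \simeq f_!(X) \wedge Y$ transfers directly to the projection formula for $(f^\ast, f_\ast)$.

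The main content of the argument, and the step I expect to require actual work, is conservativity of $f_\ast$. Here I would exploit that for a finite etale map, the diagonal $\Delta: T \to T \times_S T$ is simultaneously a closed immersion (by separatedness) and an open immersion (by unramifiedness), giving a clopen decomposition $T \times_S T \simeq T \sqcup W$ of $S$-schemes, in which $T$ is embedded via $\Delta$. Writing $p_1, p_2 : T \times_S T \to T$ for the two projections, smooth (equivalently, proper) base change applied to the Cartesian square for $f$ against itself yields a natural equivalence
\[ f^\ast f_\ast X \simeq (p_2)_\ast p_1^\ast X \simeq X \oplus (p_2|_W)_\ast (p_1|_W)^\ast X, \]
in which the first summand comes from $p_1|_T = p_2|_T = \mr{id}_T$. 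One checks that under this identification the counit $f^\ast f_\ast X \to X$ is projection onto the diagonal summand, so $X$ is a natural retract of $f^\ast f_\ast X$. In particular, $f_\ast X \simeq 0$ forces $X \simeq 0$, establishing conservativity. With all three hypotheses in hand, Thm.~\ref{thm:monoidalBarrBeck} then produces the asserted equivalence $\SH(T) \simeq \Mod_{\SH(S)}(f_\ast 1)$ and identifies $f^\ast \dashv f_\ast$ with the free-forgetful adjunction.
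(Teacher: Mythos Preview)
Your proof is correct and follows the same overall strategy as the paper: the colimit-preservation and projection formula hypotheses are read off from the six-functor facts recalled in Section~\ref{sec:background}, and conservativity is deduced from base change along the fiber square for $f$ against itself.

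The only difference is in how conservativity is extracted. The paper asserts that $g_\ast g^\ast X$ is a finite coproduct of copies of $X$ (where $g: T\times_S T \to T$), which is literally true only when the finite etale cover splits after base change to $T$; your argument instead uses the clopen diagonal $T \hookrightarrow T\times_S T$ to exhibit $X$ as a retract of $f^\ast f_\ast X$, which is valid for an arbitrary finite etale $f$. Both arguments yield the same conclusion, and for the application to $\zeta:\Spec\CC\to\Spec\RR$ (where $\Spec\CC\times_{\Spec\RR}\Spec\CC\simeq\Spec\CC\sqcup\Spec\CC$) they coincide, but your formulation is the cleaner one at the stated level of generality.
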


\begin{proof} In view of the properties of the base change functors outlined in Section~\ref{sec:background}, it only remains to show that $f_{\ast}$ is conservative, so suppose $X \in \SH(T)$ such that $f_{\ast} X \simeq 0$. Consider the pullback square
\[ \begin{tikzcd}[row sep=4ex, column sep=4ex, text height=1.5ex, text depth=0.25ex]
T \times_S T \ar{r}{g} \ar{d}{g} & T \ar{d}{f} \\
T \ar{r}{f} & S.
\end{tikzcd} \]
Then $f^\ast f_\ast x \simeq g_\ast g^\ast x \simeq 0$. But $g_\ast g^\ast x$ is a finite coproduct of copies of $x$, using that $f$ is finite etale. Hence, $x \simeq 0$.
\end{proof}

\begin{cor}\label{cor:EtaleMonoidalBarrBeck}
We have
$$ \SH(\CC) \simeq \Mod_{\SH(\RR)}(\Spec \CC_+). $$
\end{cor}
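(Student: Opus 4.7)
The corollary is essentially immediate from Proposition~\ref{prop:EtaleMonoidalBarrBeck}, so the plan is simply to verify that the hypotheses of that proposition apply to $\zeta$, and to match the notation.

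First, I would observe that the map $\zeta: \Spec \CC \to \Spec \RR$ is finite \'etale, since $\CC/\RR$ is a finite separable (in fact Galois) extension of fields. Therefore Proposition~\ref{prop:EtaleMonoidalBarrBeck} applies to $f = \zeta$, yielding an equivalence
\[ \SH(\CC) \simeq \Mod_{\SH(\RR)}(\zeta_\ast 1). \]

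Next, I would identify $\zeta_\ast 1$ with $\Sigma^\infty_+ \Spec \CC$. This is precisely the observation made in the sentence immediately preceding Proposition~\ref{prop:EtaleMonoidalBarrBeck}: because $\zeta^\ast \dashv \zeta_\ast$ is a monoidal adjunction with $\zeta^\ast$ strong monoidal, the lax monoidal right adjoint $\zeta_\ast$ sends the unit $1 \in \SH(\CC)$ to a commutative algebra object in $\SH(\RR)$, and unwinding the definitions, $\zeta_\ast 1 \simeq \Sigma^\infty_+ \Spec \CC$ as commutative algebras. Substituting gives exactly the stated equivalence
\[ \SH(\CC) \simeq \Mod_{\SH(\RR)}(\Sigma^\infty_+ \Spec \CC). \]

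There is really no obstacle here; the only substantive content is contained in the proof of Proposition~\ref{prop:EtaleMonoidalBarrBeck}, namely the conservativity of $f_\ast$ for finite \'etale $f$ (handled there via the base change square and the splitting of $g_\ast g^\ast$ as a finite coproduct), the projection formula, and the colimit-preservation of $f_\ast$. All three have been verified in the previous proposition, and specialization to $\zeta$ requires no additional argument.
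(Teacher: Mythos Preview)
Your proposal is correct and matches the paper's approach exactly: the corollary is stated without proof in the paper because it is simply the specialization of Proposition~\ref{prop:EtaleMonoidalBarrBeck} to $f = \zeta$, together with the identification $\zeta_\ast 1 \simeq \Spec \CC_+$ recorded immediately before that proposition. There is nothing to add.
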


The following is the key calculational observation behind this paper.

\begin{prop} \label{prop:cofiberRho} There is a non-canonical map
	$$ C(\rho) \to \Spec \CC_+ $$
which becomes an equivalence after $p$-completion and cellularization.	
\end{prop}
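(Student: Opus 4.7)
Because $\zeta^{\ast}\rho = 0$ in $\pi^{\CC}_{-1,-1}(S^{0,0})$, the adjunction $\zeta^{\ast} \dashv \zeta_{\ast}$ gives that the composite
$$ S^{-1,-1} \xrightarrow{\rho} S^{0,0} \xrightarrow{\eta} \zeta_{\ast}\zeta^{\ast} S^{0,0} = \Spec \CC_{+} $$
is nullhomotopic, where $\eta$ is the unit of $\zeta^{\ast} \dashv \zeta_{\ast}$.  Any choice of nullhomotopy extends to a (non-canonical) map $C(\rho) \to \Spec \CC_{+}$; the space of such extensions is a torsor over $\pi^{\RR}_{0,-1}(\Spec \CC_{+}) \cong \pi^{\CC}_{0,-1}(S^{0,0})$.

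\textbf{Reduction to a $\pi^{\RR}_{\ast,\ast}$-isomorphism.}  The spectrum $C(\rho)$ is cellular, and cellularization does not affect $\pi^{\RR}_{\ast,\ast}$ since the spheres are cellular.  Therefore the proposition reduces to verifying that the chosen map induces an isomorphism
$$ \pi^{\RR}_{\ast,\ast}(C(\rho))^{\wedge}_{p} \xrightarrow{\cong} \pi^{\RR}_{\ast,\ast}(\Spec \CC_{+})^{\wedge}_{p} \cong \pi^{\CC}_{\ast,\ast}(S^{0,0})^{\wedge}_{p}, $$
where the second identification is adjunction.  My strategy is to deduce the $\pi^{\RR}_{\ast,\ast}$-isomorphism from an isomorphism on mod $p$ motivic homology together with convergence of the real motivic mod $p$ Adams spectral sequence (Hu--Kriz--Ormsby at $p=2$, Stahn at odd $p$).

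\textbf{Mod $p$ homology calculation.}  By the projection formula for the finite \'etale morphism $\zeta$,
$$ (H\FF_{p})_{\RR \, \ast,\ast}(\Spec \CC_{+}) \;\simeq\; \pi^{\RR}_{\ast,\ast}(\zeta_{\ast} (H\FF_{p})_{\CC}) \;\simeq\; \pi^{\CC}_{\ast,\ast}(H\FF_{p})_{\CC} \;=\; \FF_{p}[\tau]. $$
For $C(\rho)$ I compute from the long exact sequence attached to $S^{-1,-1} \xrightarrow{\rho} S^{0,0} \to C(\rho)$:
\begin{itemize}
\item At $p=2$: $\rho$ is a polynomial generator in $\pi^{\RR}_{\ast,\ast}(H\FF_{2})_{\RR} = \FF_{2}[\tau,\rho]$, hence a nonzero-divisor, so the LES collapses to give $(H\FF_{2})_{\RR \, \ast,\ast}(C(\rho)) = \FF_{2}[\tau,\rho]/\rho = \FF_{2}[\tau]$.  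Both sides are thus cyclic $\pi^{\RR}_{\ast,\ast}(H\FF_{2})_{\RR}$-modules generated in bidegree $(0,0)$, and since the unit of $C(\rho)$ maps to the unit of $\Spec \CC_{+}$, the induced map is automatically an isomorphism for \emph{every} choice of lift.
\item At odd $p$: $\rho$ acts as zero on $(H\FF_{p})_{\RR}$ since $\pi^{\RR}_{-1,-1}(H\FF_{p})_{\RR}=0$, so the cofiber sequence splits after smashing with $(H\FF_{p})_{\RR}$, giving $\FF_{p}[\tau^{2}] \oplus \FF_{p}[\tau^{2}]\{s\}$ with $|s|=(0,-1)$.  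This has the same bigraded dimensions as $\FF_{p}[\tau]$.  The image of $s$ is a free parameter equal to $c\tau$ for some $c \in \FF_{p}$, determined by the choice of lift; choosing the lift so that $c=1$ (possible since the torsor action of $\pi^{\CC}_{0,-1}(S^{0,0})$ contains $\tau$) produces the homology isomorphism.
\end{itemize}

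\textbf{Main obstacle.}  The last step is ensuring that the mod $p$ homology isomorphism converges to a $p$-complete $\pi^{\RR}_{\ast,\ast}$-isomorphism rather than merely a $(p,\eta)$-complete one.  At odd $p$, both $C(\rho)$ and $\Spec \CC_{+}$ are $\rho$-torsion, so by Prop.~\ref{prop:eta} $(-)^{\wedge}_{\eta} \simeq (-)^{\wedge}_{\rho}$ and the Adams convergence target coincides with $p$-completion.  At $p=2$, a similar argument applies once one checks that the relevant Adams convergence theorem for cellular real motivic spectra descends to $p$-completion on these specific, $\rho$-trivial spectra --- this is the technical point requiring the most care.
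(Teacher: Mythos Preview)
Your proof follows essentially the same route as the paper: construct the map from a nullhomotopy of $\xi\circ\rho$, reduce via the motivic Adams spectral sequence to an isomorphism on mod-$p$ motivic homology, and handle convergence by observing that both spectra are $\rho$-torsion hence $\eta$-complete (the paper invokes Prop.~\ref{prop:eta} for exactly this purpose). At $p=2$ the arguments are identical, including your observation that the homology of $C(\rho)$ is cyclic so the map is forced by where the unit goes.

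The one place you diverge is at odd primes: the paper simply asserts that $\beta$ is an isomorphism, whereas you correctly observe that the image of the degree-$(0,-1)$ generator $s$ is not forced by the unit and depends on the choice of nullhomotopy, and you propose adjusting the lift via the torsor action to arrange $c\neq 0$. Note however that your adjustment invokes $\tau\in\pi^{\CC}_{0,-1}(S^{0,0})$, which only exists after $p$-completion; to make this step go through literally you should construct the comparison map in $\SH(\RR)^{\wedge}_p$ from the outset, which is harmless given the statement. In this sense you have identified and (modulo that wrinkle) addressed a genuine subtlety that the paper's proof glosses over.
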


\begin{proof} Let 
$$\xi: S^{0,0}_{\RR} \to \Spec \CC_+$$ 
be the unit map, which is adjoint to the identity in $\SH(\CC)$. By adjunction, we have
\[  [S^{-1,-1}, \Spec \CC_+]_\RR \cong [S^{-1,-1}, S^{0,0}]_\CC. \]
But since $\rho \simeq 0$ in $\SH(\CC)$, $\xi \circ \rho$ is null homotopic. Making a choice of null homotopy, we obtain a comparison map $$\alpha: C(\rho) \to \Spec \CC_+$$ 
that we wish to show is a $p$-complete cellular equivalence. Using the motivic Adams spectral sequence, it suffices to show that
\[ \beta: (H\FF_p)^{\RR}_{\ast,\ast}(C(\rho))  \to (H\FF_p)^{\RR}_{\ast,\ast}(\Spec \CC_+) \cong \pi^{\CC}_{\ast,\ast}(H\FF_p)_{\CC} \]
is an isomorphism (for $p$ odd, the motivic Adams spectral sequence only converges to the $(p,\eta)$-completion \cite{HellerOrmsby}, but both $C(\rho)$ and $\Spec \CC_+$ are $\eta$-complete by Prop.~\ref{prop:eta}). 

For $p = 2$, $\pi^{\RR}_{\ast,\ast}$ of the map $(H\FF_2)_\RR \to \zeta_{\ast} (H\FF_2)_{\CC}$ is computed to be the surjection $\FF_2[\tau,\rho] \to \FF_2[\tau]$, which identifies $\beta$ as the isomorphism $\FF_2[\tau,\rho]/\rho \cong \FF_2[\tau]$.

For $p$ odd, $\pi^{\RR}_{\ast,\ast}$ of the map $(H\FF_p)_{\RR} \to \zeta_{\ast} (H\FF_p)_{\CC}$ is computed (see \cite[Prop.~1.1]{Stahn}) to be the injection $\FF_p[\tau^2] \to \FF_p[\tau]$.  Using the fact that $\rho$ acts trivially, we deduce
$$ (H\FF_p)^\RR_{*,*}C\rho \cong \FF_p[\tau^2]\{1,\tau\} $$
and we conclude that $\beta$ is an isomorphism.
\end{proof}

\begin{rmk}\label{rmk:cofiberRho}
We claim that $\Spec \CC_+$ is not cellular in $\SH(\RR)$. Indeed, upon applying $\zeta^\ast$, the cofiber sequence
\[ S^{-1,-1} \xto{\rho} S^{0,0} \to C(\rho) \]
becomes
\[ S^{-1,-1} \xto{0} S^{0,0} \to \zeta^*(C (\rho)) \]
and thus we have
\[\zeta^*(C\rho) \simeq S^{0,0} \vee S^{0,-1}. \]
But
$$\zeta^\ast \Spec \CC_+ =\zeta^\ast \zeta_\ast 1 = S^{0,0} \vee S^{0,0}.$$ 
In effect, the presence of the motivic weight forbids $\Spec \CC_+$ from being cellular.
\end{rmk}

\begin{rmk}\label{rmk:CrhoEoo} Via Prop. \ref{prop:cofiberRho} and $\Cell$ being lax monoidal, $C(\rho)^\wedge_p$ and therefore $C(\rho^n)^\wedge_p$ obtain the structure of $E_{\infty}$-algebras in $\SH(\RR)^{\wedge}_p$.
\end{rmk}

\begin{cor}\label{cor:CellularBarrBeck} There is an equivalence
$$\SH_{\cell}(\CC)^{\wedge}_p \simeq \Mod_{\SH_{\cell}(\RR)^{\wedge}_p}( C(\rho) )$$ 
and we have a diagram of commuting left adjoints
\[ \begin{tikzcd}[row sep=4ex, column sep=4ex, text height=1.5ex, text depth=0.25ex]
\SH_{\cell}(\RR)^{\wedge}_p \ar{d}{\zeta^{\ast}} \ar[hookrightarrow]{r} & \SH(\RR)^{\wedge}_p \ar{d}{\zeta^{\ast}} \\
\SH_{\cell}(\CC)^{\wedge}_p \ar{d}{\simeq}  \ar[hookrightarrow]{r} & \SH(\CC)^{\wedge}_p \ar{d}{\simeq}  \\
\Mod_{\SH_{\cell}(\RR)^{\wedge}_p}( C(\rho) ) \ar[hookrightarrow]{r} & \Mod_{\SH(\RR)^{\wedge}_p}(\Spec \CC_+) \\
\end{tikzcd} \]
where the horizontal right adjoints are given by the cellularization functor.
In particular, for $X \in \Sp(\RR)^{\wedge}_p$, we have an induced isomorphism
$$ \pi^{\RR}_{*,*}X \wedge C(\rho) \cong \pi^\CC_{*,*}\zeta^*X. $$
\end{cor}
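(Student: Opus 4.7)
The strategy is to descend the equivalence $\SH(\CC) \simeq \Mod_{\SH(\RR)}(\Spec\CC_+)$ of Corollary~\ref{cor:EtaleMonoidalBarrBeck} through cellularization and then $p$-completion, using the general descent machinery of Sections~\ref{sec:localization-of-symmetric-monoidal-infty-categories-with-respect-to-a-commutative-algebra} and~\ref{sec:cellularization}. The hypotheses needed about $\zeta^{\ast} \dashv \zeta_{\ast}$—conservativity of $\zeta_{\ast}$, preservation of colimits, and the projection formula—are all verified in the proof of Proposition~\ref{prop:EtaleMonoidalBarrBeck}, so they are available at each stage.

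First, I would cellularize. Applying Lemma~\ref{lem:CellularizationOnModules} to $\SH(\RR)$ with the compact dualizable generators $\{S^{i,j}\}$ (which contain the unit and are closed under tensor) and the $E_{\infty}$-algebra $\Spec\CC_+$ produces a fully faithful embedding
$$ \Mod_{\SH_{\cell}(\RR)}\bigl(\Cell(\Spec\CC_+)\bigr) \hookrightarrow \Mod_{\SH(\RR)}(\Spec\CC_+) \simeq \SH(\CC) $$
onto the localizing subcategory generated by $\{S^{i,j} \otimes \Spec\CC_+\}$. Under the Barr--Beck equivalence, $S^{i,j} \otimes \Spec\CC_+$ corresponds to $\zeta^{\ast} S^{i,j} = S^{i,j}_{\CC}$, so this localizing subcategory is precisely $\SH_{\cell}(\CC)$, giving
$$ \SH_{\cell}(\CC) \simeq \Mod_{\SH_{\cell}(\RR)}\bigl(\Cell(\Spec\CC_+)\bigr). $$

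Next, $p$-complete. The example following Lemma~\ref{lem:PassingLocalizationToModuleCat}, applied to the free-forgetful adjunction between $\SH_{\cell}(\RR)$ and the right-hand side above with $x = p$, yields
$$ \SH_{\cell}(\CC)^{\wedge}_p \simeq \Mod_{\SH_{\cell}(\RR)^{\wedge}_p}\bigl(\Cell(\Spec\CC_+)^{\wedge}_p\bigr). $$
Proposition~\ref{prop:cofiberRho} then identifies $\Cell(\Spec\CC_+)^{\wedge}_p$ with $C(\rho)^{\wedge}_p$, establishing the first displayed equivalence of the corollary. The diagram of left adjoints commutes by construction: in the top square, $\zeta^{\ast}$ preserves colimits and sends cellular generators to cellular objects; in the bottom square, the inclusion of $C(\rho)^{\wedge}_p$-modules into $(\Spec\CC_+)^{\wedge}_p$-modules matches the inclusion $\SH_{\cell}(\CC)^{\wedge}_p \hookrightarrow \SH(\CC)^{\wedge}_p$ under the two Barr--Beck identifications, and the horizontal right adjoints are cellularization in each case. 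The homotopy-group isomorphism $\pi^{\RR}_{*,*}(X \wedge C(\rho)) \cong \pi^{\CC}_{*,*}(\zeta^{\ast} X)$ for $X \in \SH_{\cell}(\RR)^{\wedge}_p$ is then immediate: the composite left adjoint along the right-hand vertical column is the free-module functor $X \mapsto X \wedge C(\rho)$, and the fully faithful embedding computes its bigraded $\RR$-motivic homotopy groups as the bigraded $\CC$-motivic homotopy groups of $\zeta^{\ast} X$.

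The principal obstacle is not the descent itself but the identification in Proposition~\ref{prop:cofiberRho}: until one knows that $\Cell(\Spec\CC_+)^{\wedge}_p \simeq C(\rho)^{\wedge}_p$, the cellularized Barr--Beck module category has no manifest form, and in particular one cannot see $C(\rho)$ as a commutative algebra (compare Remark~\ref{rmk:CrhoEoo}). Once that identification is in hand—via the mod $p$ motivic Adams spectral sequence and the known $\pi^{\RR}_{*,*}$-computation for the base-change map $(H\FF_p)_{\RR} \to \zeta_{\ast}(H\FF_p)_{\CC}$—the remainder is bookkeeping with the general descent lemmas of Sections~\ref{sec:localization-of-symmetric-monoidal-infty-categories-with-respect-to-a-commutative-algebra} and~\ref{sec:cellularization}, which were written with exactly this application in mind.
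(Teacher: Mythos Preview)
Your proof is correct and follows essentially the same route as the paper: start from the Barr--Beck equivalence of Corollary~\ref{cor:EtaleMonoidalBarrBeck}, cellularize via Lemma~\ref{lem:CellularizationOnModules} with $A=\Spec\CC_+$, $p$-complete, and then invoke Proposition~\ref{prop:cofiberRho} to identify $\Cell(\Spec\CC_+)^{\wedge}_p$ with $C(\rho)^{\wedge}_p$. The only cosmetic difference is that for the $p$-completion step you cite Lemma~\ref{lem:PassingLocalizationToModuleCat} (and its following example), whereas the paper cites Lemma~\ref{lm:BousfieldLocalizationOfCellular}; the former handles the vertical module-category equivalences while the latter is what justifies the horizontal cellularization adjunctions after localization, so in fact both are implicitly in play and your argument is complete.
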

\begin{proof} Combine Prop.~\ref{prop:cofiberRho}, Prop.~\ref{prop:EtaleMonoidalBarrBeck}, Lem.~\ref{lem:CellularizationOnModules} with $A = \Spec(\CC)_+$, and Lem.~\ref{lm:BousfieldLocalizationOfCellular} for the $p$-completion.
\end{proof}

\begin{wrn} $\Cell$ is not strong monoidal, and indeed one may show that $$\Cell(\Spec \CC_+ \wedge \Spec \CC_+) \not\simeq C(\rho)^{\wedge 2}.$$ Therefore, we don't have an induced adjunction between $\Spec \CC_+$-local objects in $\SH(\RR)^{\wedge}_p$ and $C(\rho)$-local objects in $\SH_{\cell}(\RR)^{\wedge}_p$.
\end{wrn}

\subsection*{Betti realization}

We next relate the motivic to the $C_2$-equivariant situation. We begin by recalling the Betti realization and constant functors, for which an $\infty$-categorical reference is \cite[\S 10.2, 11]{BachmannHoyoisNorms}.

\begin{dfn} The complex Betti realization functor 
$$\Be: \SH(\CC) \to \Sp$$ 
is the unique colimit preserving functor that sends the complex motivic spectrum $\Sigma^{\infty}_+ X$ to $\Sigma^{\infty}_+ X(\CC)$ for $X$ a smooth quasi-projective $\CC$-variety, where $X(\CC)$ is endowed with the analytic topology. Likewise, the $C_2$-Betti realization functor $$\Be^{C_2}: \SH(\RR) \to \Sp^{C_2}$$ 
is the unique colimit preserving functor that sends the real motivic spectrum $\Sigma^{\infty}_+ X$ to $\Sigma^{\infty}_+ X(\CC)$ for $X$ a smooth quasi-projective $\RR$-variety, where $X(\CC)$ has $C_2$-action given by complex conjugation. 
We define $p$-complete Betti realization functors by
\begin{align*}
\widehat\Be_p(-) & := \Be(-)^{\wedge}_p, \\
\widehat\Be{}_p^{C_2}(-) & := \Be^{C_2}(-)^{\wedge}_p.
\end{align*}
\end{dfn}

Both $\Be$ and $\Be^{C_2}$ are symmetric monoidal functors. Let $\Sing$ and $\Sing^{C_2}$ denote their respective right adjoints, so we have the following diagram of adjoint functors:
\begin{equation}
\xymatrix@R+1em@C+1em{
\SH({\RR}) \ar@<.5ex>[r]^{\Be^{C_2}} \ar@<-.5ex>[d]_{\zeta^*}   & \Sp^{C_2} \ar@<.5ex>[l]^{\Sing^{C_2}} \ar@<-.5ex>[d]_{\Res_e^{C_2}} \\
\SH({\CC}) \ar@<.5ex>[r]^{\Be} \ar@<-.5ex>[u]_{\zeta_*} & \Sp \ar@<-.5ex>[u]_{\Ind_e^{C_2}}
\ar@<.5ex>[l]^{\Sing} 
}
\end{equation}

We also have the real Betti realization functor 
$$ \Be_{\RR}: \SH(\RR) \to \Sp$$ 
that sends $\Sigma^{\infty} X_+$ to $\Sigma^{\infty}_+ X(\RR)$. By definition, $\Phi^{C_2} \Be^{C_2} \simeq \Be_{\RR}$. Bachmann has also identified his real-etale localization functor $$\SH(\RR) \to \Sp$$ with $\Be_{\RR}$ (\cite[\S 10]{Bachmann}). If we let 
$$ i_{\ast} : \Sp \to \Sp^{C_2} $$ 
denote the right adjoint to geometric fixed points $(-)^{\Phi{C_2}}$, then it follows that $\Sing^{C_2} i_{\ast}$ is fully faithful.

Consider the $\rho$-inverted motivic sphere $S^{0,0}[\rho^{-1}]$ and the associated localization $\SH(S)[\rho^{-1}]$. 
The following main theorem of \cite{Bachmann} is essential.

\begin{thm}[\cite{Bachmann}]\label{thm:Bachmann}
There is an equivalence of $\infty$-categories
$$\SH(S)[\rho^{-1}] \simeq \Sp(\Shv(\Sper(S))),$$
where $\Sper(S)$ is the real spectrum of $S$ (\cite[\S 3]{Bachmann}). In particular, we have 
$$ \SH(\RR)[\rho^{-1}] \simeq \Sp $$ 
and the following diagram commutes
$$
\xymatrix{
\SH(\RR) \ar[rr]^{\Be_\RR} \ar[dr] && \Sp \\
& \SH(\RR)[\rho^{-1}] \ar[ur]_\simeq  
}
$$
Thus, real Betti realization is localization with respect to $\rho$.
\end{thm}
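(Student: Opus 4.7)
The plan is to follow Bachmann's two-stage strategy: first identify $\rho$-inversion in motivic spectra with real-etale sheafification on $\Sm_S$, and then identify real-etale sheaves with sheaves on the real spectrum via Scheiderer's theorem.

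First I would construct the comparison functor $\Phi: \SH(S)[\rho^{-1}] \to \Sp(\Shv(\Sper(S)))$. By the universal property of $\SH(S)$ as the presentable stable symmetric monoidal $\infty$-category obtained from $\Sm_S$ by Nisnevich localization, $\AF^1$-localization, and $\PP^1$-stabilization, it suffices to describe a functor $\Sm_S \to \Sp(\Shv(\Sper(S)))$ with the appropriate invariance and stability properties, and to check that $\rho$ is sent to an equivalence. The natural such functor sends $X$ to $\Sigma^{\infty}_+$ of the induced sheaf on $\Sper(S)$; the crucial point is that on the real spectrum, stalks are henselizations of ordered residue fields, and over any such field $\GG_m$ deformation-retracts onto $\{\pm 1\}$, so $\rho$ becomes the identification $\{\pm 1\} \xrightarrow{\sim} \GG_m$.

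The main obstacle, and the technical heart of Bachmann's proof, is showing $\Phi$ is an equivalence. The approach is to establish real-etale descent: a map in $\SH(S)$ is a $\rho$-equivalence if and only if it is an equivalence on all real-etale stalks, i.e. after base change to the henselizations of ordered residue fields at points of $\Sper(S)$. This reduces, via the standard slice filtration of $\SH(S)$, to the analogous statement for Milnor--Witt $K$-theory and motivic cohomology with $\rho$ inverted, where one can verify directly (using the computation of Milnor--Witt $K$-theory of real closed fields) that $\rho$-inverted cohomology agrees with the Witt-theoretic / real-etale cohomology. Combined with Scheiderer's equivalence $\Shv_{\mathrm{ret}}(\Sm_S) \simeq \Shv(\Sper(S))$, this upgrades the comparison functor to an equivalence.

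Finally, the specialization to $S = \Spec(\RR)$ is easy once the general statement is in hand: $\Sper(\RR)$ is a point, so $\Sp(\Shv(\Sper(\RR))) \simeq \Sp$. To identify the resulting equivalence with real Betti realization, note that $\Be_{\RR}(\rho)$ is the inclusion $\{\pm 1\} \hookrightarrow \RR^{\times}$, which is a homotopy equivalence since $\RR^\times$ deformation retracts onto $\{\pm 1\}$. Hence $\Be_{\RR}$ factors through $\SH(\RR)[\rho^{-1}]$, and the induced colimit-preserving symmetric monoidal functor $\SH(\RR)[\rho^{-1}] \to \Sp$ must agree with the equivalence constructed above, since both functors are determined by their value on $S^{0,0}$. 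This gives the desired commutative triangle and shows that $\Be_\RR$ is precisely the $\rho$-localization functor.
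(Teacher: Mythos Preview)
The paper does not prove this theorem at all: it is stated as a citation of Bachmann's main result and is used as a black box throughout. There is therefore no ``paper's own proof'' to compare your proposal against.

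That said, your outline is a reasonable summary of Bachmann's actual strategy (real-\'etale localization via Scheiderer's comparison, reduction to stalks, identification with $\Be_\RR$ over $\Spec\RR$). A few of your reductions are loose as written---for instance, the step ``reduce via the slice filtration to Milnor--Witt $K$-theory'' glosses over substantial work in Bachmann's paper, and the claim that the two colimit-preserving symmetric monoidal functors $\SH(\RR)[\rho^{-1}]\to\Sp$ ``must agree since both are determined by their value on $S^{0,0}$'' is not quite right (a symmetric monoidal left adjoint out of $\SH(\RR)$ is determined by a functor on smooth $\RR$-schemes with the requisite invariance properties, not merely by where the unit goes). But since the paper only cites the result, these are issues with reproducing Bachmann's proof, not discrepancies with the present paper.
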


We recall the definition of the constant functor, and Heller-Ormsby's equivariant generalization \cite{HellerOrmsby}.

\begin{dfn}\label{def:HellerOrmsby} The constant functor 
$$c^*_\CC: \Sp \to \SH(\CC)$$ 
is the unique colimit preserving functor that sends $S^0$ to $S^{0,0}$. The $C_2$-equivariant constant functor 
$$c^*_\RR: \Sp^{C_2} \to \SH(\RR)$$ 
is the unique colimit preserving functor that sends $S^0 = {C_2/C_2}_+$ to $S^{0,0} = \Spec \RR_+$ and $C_2/1_+$ to $\Spec \CC_+$.  
\end{dfn}

\begin{lem} \label{lm:BettiSplitsConstant} Betti realization splits the constant functor. In other words, we have equivalences
\begin{gather*} 
\Be \circ c^*_{\CC} \simeq \id, \\
\quad \Be^{C_2} \circ c^*_{\RR} \simeq \id.
\end{gather*}
\end{lem}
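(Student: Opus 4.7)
The plan is to exploit the universal properties by which $c^{\ast}_{\CC}$ and $c^{\ast}_{\RR}$ are defined. Since $c^{\ast}_{\CC}$ is the unique colimit-preserving functor $\Sp \to \SH(\CC)$ sending $S^0$ to $S^{0,0}$, and since $\Be$ is also colimit-preserving (being a left adjoint), the composite $\Be \circ c^{\ast}_{\CC}$ is a colimit-preserving endofunctor of $\Sp$. Similarly, $\Be^{C_2} \circ c^{\ast}_{\RR}$ is a colimit-preserving endofunctor of $\Sp^{C_2}$. The goal is then to identify these with $\id$ by comparing values on generators.

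First, I would compute the relevant values explicitly. For the complex case, $c^{\ast}_{\CC}(S^0) = S^{0,0} = \Sigma^{\infty}_+ \Spec(\CC)$, whose Betti realization is $\Sigma^{\infty}_+ \Spec(\CC)(\CC) = \Sigma^{\infty}_+ \mathrm{pt} = S^0$. For the real case, $c^{\ast}_{\RR}(S^0) = S^{0,0} = \Sigma^{\infty}_+ \Spec(\RR)$ realizes to $\Sigma^{\infty}_+ \Spec(\RR)(\CC) = \Sigma^{\infty}_+ \mathrm{pt} = S^0$ with trivial $C_2$-action, while $c^{\ast}_{\RR}(\Sigma^{\infty}_+ C_2) = \Spec(\CC)_+$ realizes to $\Sigma^{\infty}_+ \Spec(\CC)(\CC)$; the set $\Spec(\CC)(\CC) = \Hom_{\RR}(\CC, \CC)$ consists of the identity and complex conjugation, which are swapped by the $C_2$-action, giving the free orbit, so the result is $\Sigma^{\infty}_+ C_2$.

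Finally, I would invoke the analogous universal property on the target: the identity $\id_{\Sp}$ is the unique colimit-preserving endofunctor sending $S^0$ to $S^0$ (which is just the stable Yoneda lemma, i.e., $\Sp$ is the free stable presentable $\infty$-category on one generator), giving the first equivalence. For the second, I would use that $\Sp^{C_2}$ is the free presentable stable $C_2$-$\infty$-category on one generator; equivalently, colimit-preserving endofunctors of $\Sp^{C_2}$ are determined by their values on the orbits $C_2/C_2_+ = S^0$ and $C_2/1_+ = \Sigma^{\infty}_+ C_2$ together with the induced action on the Burnside category (e.g., via the spectral Mackey functor presentation).

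The main obstacle is the last step in the equivariant case: it is not quite enough to know that $\Be^{C_2} \circ c^{\ast}_{\RR}$ sends the two generators to the correct objects; one must also verify that the comparison is compatible with the restriction/transfer maps encoded in the Burnside $2$-category. I would handle this by strengthening the universal property of $c^{\ast}_{\RR}$ to that of a colimit-preserving \emph{symmetric monoidal} functor (which both $\Be^{C_2}$ and $c^{\ast}_{\RR}$ are), so that the only datum to check is the value on the single free orbit $C_2/1_+$; the transfer map is then automatically recovered from the symmetric monoidal structure together with the self-duality of $\Sigma^{\infty}_+ C_2$, matching the analogous fact that $\Spec(\CC)_+$ is self-dual in $\SH(\RR)$ since $\Spec(\CC) \to \Spec(\RR)$ is finite etale.
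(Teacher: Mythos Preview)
Your approach is essentially the same as the paper's: both argue that the composites preserve colimits and then check values on generators. The paper's proof is just the three observations $(\Be \, c^*_{\CC})(S^0) = S^0$, $(\Be^{C_2} c^*_{\RR})(S^0) = S^0$, and $(\Be^{C_2} c^*_{\RR})({C_2}_+) = {C_2}_+$, with no further justification.

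You go further than the paper in worrying about whether, in the equivariant case, matching the two orbit generators as \emph{objects} suffices to pin down a colimit-preserving endofunctor of $\Sp^{C_2}$ up to equivalence. This is a legitimate point of rigor: the paper's terse proof, and indeed its Definition of $c^*_{\RR}$ as ``the unique colimit preserving functor'' with prescribed values on the two orbits, both implicitly rely on the same uniqueness claim, which does require knowing more than the images of the two objects. Your proposed fix via symmetric monoidality (so that the unit is automatic and the transfer is recovered from self-duality of ${C_2}_+$) is a reasonable way to close this; another, closer to what the cited references actually do, is to use the concrete construction of $c^*_{\RR}$ at the level of $C_2$-spaces (constant presheaf, then stabilize), for which the identification $\Be^{C_2} \circ c^*_{\RR} \simeq \id$ is visible before passing to universal properties.
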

\begin{proof} The functors in question preserve colimits, so it suffices to observe that: 
\begin{align*}
(\Be c^*_{\CC})(S^0) & = S^0, \\
(\Be^{C_2} c^*_{\RR})(S^0) & = S^0, \\ 
\text{and}\quad (\Be^{C_2} c^*_{\CC})(C_2/1_+) & = C_2/1_+.
\end{align*}
\end{proof}

\begin{lem} \label{lem:EquivariantSpectraIsModuleCategoryOverMotivicSpectra} The monoidal adjunctions
\begin{gather*}
\adjunct{\Be}{\SH(\CC)}{\Sp}{\Sing} \\
\adjunct{\Be^{C_2}}{\SH(\RR)}{\Sp^{C_2}}{\Sing^{C_2}}
\end{gather*}
satisfy the hypotheses of Thm.~\ref{thm:monoidalBarrBeck}. Therefore, we have
\begin{gather*}
\Sp \simeq \Mod_{\SH(\CC)}(\Sing S^0), \\
\Sp^{C_2} \simeq \Mod_{\SH(\RR)}(\Sing^{C_2} S^{0}). 
\end{gather*}
\end{lem}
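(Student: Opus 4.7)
The plan is to verify the three hypotheses of Thm.~\ref{thm:monoidalBarrBeck} for each of the adjunctions $(\Be, \Sing)$ and $(\Be^{C_2}, \Sing^{C_2})$; the two cases are entirely parallel, and I treat them simultaneously. Once the hypotheses are in hand, the monadic equivalences $\Sp \simeq \Mod_{\SH(\CC)}(\Sing S^0)$ and $\Sp^{C_2} \simeq \Mod_{\SH(\RR)}(\Sing^{C_2} S^0)$ follow immediately.

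For conservativity of the right adjoints, I would invoke Lem.~\ref{lm:BettiSplitsConstant}. The constant functors $c^*_\CC$ and $c^*_\RR$ preserve colimits between presentable $\infty$-categories, so admit right adjoints $c_{\CC *}$ and $c_{\RR *}$; passing to right adjoints in the equivalences $\Be \circ c^*_\CC \simeq \id$ and $\Be^{C_2} \circ c^*_\RR \simeq \id$ yields $c_{\CC *} \circ \Sing \simeq \id$ and $c_{\RR *} \circ \Sing^{C_2} \simeq \id$. Hence $\Sing$ and $\Sing^{C_2}$ admit retractions and are therefore conservative.

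For colimit preservation, I would use the standard fact that between stable presentable $\infty$-categories, a right adjoint preserves all colimits iff its left adjoint preserves compact objects. By Section~\ref{sec:background}, $\SH(\CC)$ and $\SH(\RR)$ are compactly generated by $\Sigma^q X_+$ for $X$ a smooth affine scheme, and these are sent by Betti realization to $\Sigma^q \Sigma^\infty_+ X(\CC)$ (with trivial action in the complex case and conjugation action in the real case). Smooth affine varieties have the homotopy type of finite CW complexes by Andreotti--Frankel Morse theory, and in the real case $X(\CC)$ inherits a $C_2$-equivariant finite CW structure via the conjugation action; hence these suspension spectra are compact in $\Sp$ and $\Sp^{C_2}$ respectively. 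Since $\Be$ and $\Be^{C_2}$ are exact and send a set of compact generators to compact objects, they preserve all compact objects.

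For the projection formula, both sides of the comparison map $\Sing(X) \otimes Y \to \Sing(X \otimes \Be Y)$ are colimit-preserving functors of $Y$, by the preceding step together with the fact that $\Be$ is a left adjoint, so it suffices to verify the equivalence for $Y$ in a set of compact generators of $\SH(\CC)$, and likewise in the real case. Since $\CC$ and $\RR$ have characteristic zero, such generators are dualizable (Section~\ref{sec:background}), and for dualizable $Y$ with dual $Y^\vee$ the projection formula holds by a standard Yoneda-plus-duality argument: for any test object $W$,
$$ \Map(W, \Sing(X) \otimes Y) \simeq \Map(W \otimes Y^\vee, \Sing X) \simeq \Map(\Be W \otimes \Be Y^\vee, X) \simeq \Map(\Be W, X \otimes \Be Y) \simeq \Map(W, \Sing(X \otimes \Be Y)), $$
using that $\Be$ is strong monoidal and thus preserves duals. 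I do not expect any step to pose a serious obstacle: conservativity is formal from the splitting in Lem.~\ref{lm:BettiSplitsConstant}, colimit preservation is explicit from the compact generators, and the projection formula reduces to duality in characteristic zero. The most delicate technical point is ensuring that $\Sigma^\infty_+ X(\CC)$ is compact as a genuine $C_2$-spectrum for $X$ smooth affine over $\RR$, which requires an equivariant finiteness or equivariant CW triangulation argument.
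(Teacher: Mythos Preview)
Your proposal is correct and follows essentially the same route as the paper: conservativity via the splitting of Lem.~\ref{lm:BettiSplitsConstant}, colimit preservation by checking that Betti realization sends compact generators to compact objects, and the projection formula by reducing to dualizable $Y$ via a Yoneda-plus-duality argument. The only cosmetic difference is that the paper phrases the finiteness step by noting that for a smooth quasi-projective $\RR$-variety $X$, both $X(\RR)$ and $X(\CC)$ have the homotopy type of finite CW-complexes (which handles the equivariant compactness you flagged), and for the projection formula it takes $B = \Sigma^\infty_+ X$ directly rather than invoking the general characteristic-zero dualizability statement.
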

\begin{proof} We verify the second statement; the first will follow by a similar argument. Let us consider the hypotheses in turn:
\begin{enumerate}
\item In view of Lem.~\ref{lm:BettiSplitsConstant}, $\Sing^{C_2}$ is conservative as it is split by the right adjoint to the constant functor $c^*_{\RR}$.
\vspace{10pt}

\item Note that for $X$ a smooth quasi-projective $\RR$-variety, $X(\RR)$ and $X(\CC)$ have the homotopy types of finite CW-complexes, hence $\Be^{C_2}(\Sigma^{\infty}_+ X)$ is compact in $\Sp^{C_2}$. Because the collection of motivic spectra $\{ \Sigma^{\infty}_+ X \}$ furnish a set of compact generators for $\SH(\RR)$, we deduce that $\Sing^{C_2}$ preserves colimits. To verify the projection formula 
$$\Sing^{C_2}(A) \wedge B \simeq \Sing^{C_2}(A \wedge \Be^{C_2} B),$$ because both sides preserve colimits in the $B$ variable, it suffices to check for $B = \Sigma^{\infty}_+ X$. In this case, we need to show that for any $W \in \SH(\RR)$, the comparison map
\[ [W,\Sing^{C_2}(A) \wedge B]_{\RR} \to [W,\Sing^{C_2}(A \wedge \Be^{C_2} B)]_{\RR} \]
is an isomorphism. Using that $B$ is dualizable, under adjunction this is equivalent to
\[ [\Be^{C_2} (W) \wedge \Be^{C_2}(B ^{\vee}),A]{}^{C_2} \to [\Be^{C_2} (W),A \wedge \Be^{C_2} B]{}^{C_2}  \]
where the conclusion follows because $\Be^{C_2}B$ is also dualizable with dual given by $\Be^{C_2}(B^{\vee})$.
\end{enumerate}
\end{proof}

Using Lem.~\ref{lem:PassingLocalizationToModuleCat}, we deduce the following $p$-complete variant.

\begin{cor}
For a prime $p$, we have
\begin{gather*}
 \Sp^{\wedge}_p \simeq \Mod_{\SH(\CC)^{\wedge}_p}([\Sing S^0]^{\wedge}_p), \\
[\Sp^{C_2}]^{\wedge}_p \simeq \Mod_{\SH(\RR)^{\wedge}_p}([\Sing^{C_2} S^{0}]^{\wedge}_p).
\end{gather*}
\end{cor}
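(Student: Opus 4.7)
The plan is to apply Lemma~\ref{lem:PassingLocalizationToModuleCat} directly to the two monoidal adjunctions
\begin{gather*}
\adjunct{\Be}{\SH(\CC)}{\Sp}{\Sing}, \\
\adjunct{\Be^{C_2}}{\SH(\RR)}{\Sp^{C_2}}{\Sing^{C_2}}.
\end{gather*}
By Lemma~\ref{lem:EquivariantSpectraIsModuleCategoryOverMotivicSpectra}, these adjunctions already satisfy the hypotheses of the monoidal Barr--Beck theorem: the right adjoints are conservative, preserve colimits, and the projection formulas hold. In both cases I take $E = C(p)$, the cofiber of multiplication by $p$ on the unit $S^{0,0}$ of the source. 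Because $\Be$ and $\Be^{C_2}$ are strong symmetric monoidal, colimit-preserving, and send $S^{0,0}$ to $S^0$, we have $F(E) = C(p)$ inside $\Sp$ (resp. $\Sp^{C_2}$). By Example~\ref{ex:Cx}, localization at $C(p)$ is precisely $p$-completion, so the $E$-local and $E'$-local subcategories are the four $p$-completed $\infty$-categories named in the statement.

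Next I invoke the conclusion of Lemma~\ref{lem:PassingLocalizationToModuleCat}, which yields an equivalence
\[ \Mod_{\SH(\CC)}(\Sing S^0)^{\wedge}_p \simeq \Mod_{\SH(\CC)^{\wedge}_p}(L_{C(p)} \Sing S^0), \]
together with the fact that $(\Be', \Sing')$ is the associated free--forgetful adjunction. Since $L_{C(p)}\Sing S^0 = [\Sing S^0]^{\wedge}_p$ by definition, combining this with the Barr--Beck equivalence $\Sp \simeq \Mod_{\SH(\CC)}(\Sing S^0)$ from Lemma~\ref{lem:EquivariantSpectraIsModuleCategoryOverMotivicSpectra} and taking $p$-completions on both sides gives the first equivalence. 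The second equivalence follows by the identical argument applied to $\Be^{C_2} \dashv \Sing^{C_2}$, with $\RR$ in place of $\CC$ and $\Sp^{C_2}$ in place of $\Sp$.

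There is no substantive obstacle to this proof: the essential content is packaged in Lemma~\ref{lem:PassingLocalizationToModuleCat}, which was set up precisely to enable such a descent of the monoidal Barr--Beck theorem to subcategories of local objects. The only minor verifications are that $F(C(p)) = C(p)$ on the nose (immediate from strong monoidality and preservation of $S^{0,0}$) and that $C(p)$-localization in the relevant presentable stable symmetric monoidal $\infty$-categories computes $p$-completion (Example~\ref{ex:Cx}).
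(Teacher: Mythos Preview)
Your proposal is correct and takes essentially the same approach as the paper: the paper's proof is the single sentence ``Using Lem.~\ref{lem:PassingLocalizationToModuleCat}, we deduce the following $p$-complete variant,'' and you have simply unpacked that sentence by specifying $E = C(p)$, noting $F(C(p)) = C(p)$, and invoking Lem.~\ref{lem:EquivariantSpectraIsModuleCategoryOverMotivicSpectra} for the Barr--Beck hypotheses.
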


We may also deduce the following cellular variant, which highlights an important difference between the $\RR$- and $\CC$-motivic settings.

\begin{cor}
The adjunction
$$ \adjunct{\Be}{\SH_{\cell}(\CC)}{\Sp}{\Cell \Sing} $$
satisfies the hypotheses of Theorem~\ref{thm:monoidalBarrBeck}, and therefore Betti realization gives an equivalence
$$ \Mod_{\SH_{\cell}(\CC)}(\Cell \Sing S^0) \simeq \Sp. $$
In particular, we have an equivalence
$$ \Mod_{\SH_{\cell}(\CC)}(\Cell \Sing S^0) \simeq \Mod_{\SH(\CC)}(\Sing S^0). $$
In the real case, the adjunction
$$
\adjunct{\Be^{C_2}}{\SH_{\cell}(\RR)}{\Sp^{C_2}}{\Cell \Sing^{C_2}},\\  
$$
satisfies these hypotheses after $p$-completion, giving 
\begin{equation}\label{eq:equivbettibarbeck}
 \Mod_{\SH_{\cell}(\RR)^{\wedge}_p}([\Cell \Sing^{C_2} S^0]^{\wedge}_p) \simeq (\Sp^{C_2})^{\wedge}_p.
 \end{equation}
\end{cor}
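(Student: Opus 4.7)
The plan is to verify, for each of the adjunctions $(\Be, \Cell\Sing)$ and $(\Be^{C_2}, \Cell\Sing^{C_2})$ (the latter after $p$-completion), the three hypotheses of the monoidal Barr-Beck theorem (Thm.~\ref{thm:monoidalBarrBeck}), bootstrapping from the fact that the non-cellular adjunctions have already been shown to satisfy these hypotheses in Lem.~\ref{lem:EquivariantSpectraIsModuleCategoryOverMotivicSpectra}. The ``in particular'' clause in the complex case is then formal: it follows by comparing the two Barr-Beck equivalences for the cellular and non-cellular versions over $\SH(\CC)$.

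Projection formula and colimit preservation for the cellularized adjunctions transfer for free from the non-cellular ones, since Lem.~\ref{lm:CellularProjectionFormula}, applied with $\cC$ equal to $\SH(\CC)$ or $\SH(\RR)$ and $\cC'$ equal to the corresponding cellular subcategory, is designed precisely for this situation. To descend the real case further to $p$-complete subcategories, we then apply Lem.~\ref{lem:PassingLocalizationToModuleCat} with $E = C(p)$, which carries all three Barr-Beck hypotheses through any Bousfield localization. So only conservativity of the cellularized right adjoints needs to be checked by hand.

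For the complex case, conservativity is a short argument. If $\Cell\Sing(X) \simeq 0$, then for every $M \in \SH_{\cell}(\CC)$ one has
\[
\Map_{\Sp}(\Be M, X) \simeq \Map_{\SH_{\cell}(\CC)}(M, \Cell\Sing X) \simeq 0,
\]
and since $\Be(S^{p,q}) = S^p$ runs through a set of generators of $\Sp$, we conclude $X \simeq 0$.

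The main obstacle is conservativity in the real case, and it is here that the presence of $C(\rho)$ in the cellular category plays an essential role. The bigraded motivic spheres Betti-realize to $S^p$ with trivial $C_2$-action; these alone do not generate $\Sp^{C_2}$, because the missing generator $\Sigma^{\infty}_{+} C_2$ is the Betti realization of $\Sigma^{\infty}_{+} \Spec(\CC)$, which by Rmk.~\ref{rmk:cofiberRho} is not cellular. The input that rescues the argument is the equivalence $\Be^{C_2}(C(\rho)) \simeq C(a) \simeq \Sigma^{1-\sigma}\Sigma^{\infty}_{+} C_2$ together with Prop.~\ref{prop:cofiberRho}, which after $p$-completion identifies $C(\rho)^{\wedge}_{p}$ as the cellular approximation of $\Spec(\CC)_+$. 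Concretely, for $X \in (\Sp^{C_2})^{\wedge}_{p}$ with $[\Cell\Sing^{C_2}]^{\wedge}_{p}(X) \simeq 0$, mapping in compact generators of $\SH_{\cell}(\RR)^{\wedge}_{p}$ of the form $(S^{p,q})^{\wedge}_{p}$ and $(C(\rho) \wedge S^{p,q})^{\wedge}_{p}$ forces mapping spectra from (shifts of) both $(S^0)^{\wedge}_{p}$ and $(\Sigma^{\infty}_{+} C_2)^{\wedge}_{p}$ into $X$ to vanish. Since these together form a set of generators of $(\Sp^{C_2})^{\wedge}_{p}$, we get $X \simeq 0$, and Thm.~\ref{thm:monoidalBarrBeck} yields the claimed equivalence.
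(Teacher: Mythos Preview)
Your overall strategy matches the paper's: use Lem.~\ref{lm:CellularProjectionFormula} to transfer the projection formula and colimit preservation from the non-cellular adjunctions of Lem.~\ref{lem:EquivariantSpectraIsModuleCategoryOverMotivicSpectra}, then verify conservativity separately. The paper phrases conservativity via the constant functor: in the complex case $c^*_\CC$ lands in $\SH_{\cell}(\CC)$, so the splitting $\Be \circ c^*_\CC \simeq \id$ (Lem.~\ref{lm:BettiSplitsConstant}) shows $\Cell\Sing$ has a retraction; in the real case $c^*_\RR((C_2)_+) = \Spec(\CC)_+$ is not cellular, but after $p$-completion its cellular approximation $C(\rho)^\wedge_p$ has the same Betti realization, recovering the splitting. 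Your direct generator-checking argument is an acceptable substitute for this.

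There is, however, a factual error in your real-case discussion. You assert that under $\Be^{C_2}$ the motivic spheres $S^{p,q}$ realize to $S^p$ with trivial $C_2$-action. This is false: since $\GG_m(\CC) = \CC^\times$ with complex conjugation is equivariantly $S^\sigma$, one has $\Be^{C_2}(S^{p,q}) \simeq S^{(p-q)+q\sigma}$, a genuine representation sphere. Your proof still goes through as written, because you include $C(\rho)$ among your test objects and $\{(C_2)_+, S^0\}$ already generate $\Sp^{C_2}$, so the erroneous premise is never load-bearing. But the corrected fact is actually stronger than what you use: the representation spheres $S^{a+b\sigma}$ already detect equivalences in $\Sp^{C_2}$ (equivalently, $\pi^{C_2}_{\ast,\ast}$-isomorphisms are equivalences, via the cofiber sequence $(C_2)_+ \to S^0 \to S^\sigma$). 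Hence your generator argument, once corrected, shows that $\Cell\Sing^{C_2}$ is conservative \emph{integrally}, without any $p$-completion---whereas the paper's constant-functor route genuinely needs $p$-completion to substitute $C(\rho)^\wedge_p$ for the non-cellular $\Spec(\CC)_+$. The paper in fact remarks after this corollary that it does not know whether the integral statement holds; your corrected line of reasoning suggests that it does.
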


\begin{proof}
Lem.~\ref{lm:CellularProjectionFormula} implies every hypotheses of Thm.~\ref{thm:monoidalBarrBeck} holds for the cellular adjuctions except for the conservativity hypothesis.  In the complex case, because $c^*_{\CC}$ has essential image in $\SH_{\cell}(\CC)$, $\Cell \Sing$ is conservative. 
However, in the real case,  
$$c^*_{\RR}(C_2/1_+) = \Spec \CC_+$$ 
is not cellular. Nonetheless, because 
$$ (\Cell \Spec \CC_+)^{\wedge}_p \simeq C(\rho)^{\wedge}_p \to (\Spec \CC_+)^{\wedge}_p$$ 
is sent to an equivalence in $(\Sp^{C_2})^{\wedge}_p$, it follows that $\Cell \Sing^{C_2}$ is conservative after $p$-completion. 
\end{proof}

\begin{rmk}
The observation (\ref{eq:equivbettibarbeck}) is not new --- Ricka proves proves this in \cite[Thm.~2.4]{Ricka}.  However, Ricka's version does not have the $p$-completion.  We believe the subtlety metioned in the proof above may have been overlooked in his proof, however, and we  
do not know if (\ref{eq:equivbettibarbeck}) holds without the $p$-completion.  
\end{rmk}

\subsection*{Betti realization as a localization}

We will now show that in the $p$-complete setting, both $\Cell \Sing$ and $\Cell \Sing^{C_2}$ are fully faithful, implying $\widehat{\Be}_p$ and $\widehat\Be{}_p^{C_2}$ are localizations when restricted to $p$-complete cellular motivic spectra.

The complex case, summarized in the following theorem, was essentially proven by Dugger and Isaksen \cite{DuggerIsaksenMASS} (in the case of $p = 2$) and Stahn \cite{Stahn} (in the case of $p$ odd).

\begin{thm} \label{thm:ComplexSingFullyFaithful} 
The functor
$$\Cell \Sing: \Sp^{\wedge}_p \to \SH_{\cell}(\CC)^{\wedge}_p$$ is fully faithful with essential image consisting of those objects in $\SH_{\cell}(\CC)^{\wedge}_p$ on which multiplication by $\tau$ is an equivalence.  Therefore, given $X \in \SH_{\cell}(\CC)^{\wedge}_p$, $2$-complete Betti realization induces an isomorphism
$$ \pi^\CC_{i,j} X[\tau^{-1}] \xto{\cong} \pi_i \widehat{\Be}_p(X). $$ 
\end{thm}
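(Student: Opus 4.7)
The key input for this theorem is the isomorphism theorem of Dugger--Isaksen (for $p=2$) and Stahn (for $p$ odd), which asserts that for every cellular $p$-complete $X$ the comparison map $\pi^\CC_{i,j}X[\tau^{-1}] \to \pi_i \widehat{\Be}_p(X)$ is an isomorphism of abelian groups (this is literally the second displayed conclusion of the theorem). My strategy is to bootstrap this numerical input into the $\infty$-categorical statement about $\Cell\Sing$ by first identifying $\Sp^{\wedge}_p$ with the $\tau$-localization of $\SH_{\cell}(\CC)^{\wedge}_p$, and then passing to right adjoints.

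First I would check that $\widehat{\Be}_p(\tau)$ is an equivalence. Under the canonical identification $\widehat{\Be}_p(S^{0,-1}) = S^0$ the map $\widehat{\Be}_p(\tau)$ is the identity (equivalently: a unit in $\pi_0 S^{\wedge}_p$), so $\tau$ acts invertibly on $\widehat{\Be}_p(Y)$ for all $Y$. Since $\widehat{\Be}_p$ is symmetric monoidal and colimit-preserving, it inverts every $\tau$-equivalence, so by the universal property of Bousfield localization it factors as
\[ \widehat{\Be}_p \simeq \bar{\Be} \circ L_\tau, \qquad \bar{\Be}: \SH_{\cell}(\CC)^{\wedge}_p[\tau^{-1}] \to \Sp^{\wedge}_p, \]
through a symmetric monoidal, colimit-preserving functor $\bar{\Be}$.

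Next I would argue that $\bar{\Be}$ is an equivalence of $\infty$-categories. Essential surjectivity is immediate: the bigraded spheres $S^{i,j}$ are sent to $S^i$, and the $p$-complete spheres compactly generate $\Sp^{\wedge}_p$. For full faithfulness, Lem.~\ref{lm:CompactGenerationAndBousfieldLocalization} gives that $\{L_\tau S^{i,j}\}$ is a set of compact generators of $\SH_{\cell}(\CC)^{\wedge}_p[\tau^{-1}]$, and both $\bar{\Be}$ and its right adjoint (which exists by the adjoint functor theorem) preserve filtered colimits, so it suffices to verify that the map on spherical mapping groups
\[ [L_\tau S^{i,j},\, L_\tau Y]_\CC \;\cong\; \pi^\CC_{i,j}Y[\tau^{-1}] \;\longrightarrow\; \pi_i\widehat{\Be}_p(Y) \;=\; [\widehat{\Be}_p S^{i,j},\, \widehat{\Be}_p Y] \]
is an isomorphism for every $Y \in \SH_{\cell}(\CC)^{\wedge}_p$; this is precisely the Dugger--Isaksen/Stahn isomorphism, which therefore simultaneously establishes the second assertion of the theorem.

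Finally, passing to right adjoints converts the equivalence $\bar{\Be}$ into an equivalence $\bar{\Be}^{-1}: \Sp^{\wedge}_p \xto{\simeq} \SH_{\cell}(\CC)^{\wedge}_p[\tau^{-1}]$. Composing with the fully faithful inclusion of $\tau$-local objects into $\SH_{\cell}(\CC)^{\wedge}_p$ identifies this composite with $\Cell\Sing$, so $\Cell\Sing$ is fully faithful with essential image exactly the $\tau$-local subcategory. The main obstacle is really just the Dugger--Isaksen/Stahn calculation, which we import as a black box; once that is in hand, the remainder of the argument is formal manipulation of presentable stable $\infty$-categories in the spirit of Section~\ref{sec:localization-of-symmetric-monoidal-infty-categories-with-respect-to-a-commutative-algebra}.
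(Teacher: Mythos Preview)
Your argument is correct, but it differs from the paper's. The paper leans on the monoidal Barr--Beck theorem (Thm.~\ref{thm:monoidalBarrBeck}), which it has already verified for $\Be \dashv \Cell\Sing$: this identifies $\Sp^{\wedge}_p$ with modules over $\Cell\Sing(S^0)^{\wedge}_p$, so everything reduces to computing this single algebra object. Dugger--Isaksen/Stahn are then invoked only for the sphere, to show $(S^{0,0})^{\wedge}_p[\tau^{-1}] \to \Cell\Sing(S^0)^{\wedge}_p$ is an equivalence; the general isomorphism $\pi^{\CC}_{i,j}X[\tau^{-1}] \cong \pi_i\widehat{\Be}_p(X)$ then falls out formally from the projection formula. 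By contrast, you bypass Barr--Beck entirely and instead factor $\widehat{\Be}_p$ through the $\tau$-localization and check directly on mapping groups, invoking Dugger--Isaksen/Stahn in its full generality (for arbitrary cellular $Y$). Your route is more elementary and self-contained; the paper's route uses a nominally weaker input (just $Y = S^{0,0}$) but relies on the Barr--Beck machinery it has set up, which is the organizing principle for the rest of the paper. One small comment: your sentence ``this is literally the second displayed conclusion of the theorem'' reads as circular at first glance; you might rephrase to make clear that you are importing the Dugger--Isaksen/Stahn computation as an external result, from which the second conclusion is then a restatement.
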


\begin{proof} Because we already know that $\Be \dashv \Cell \Sing$ satisfies the hypotheses of Thm.~\ref{thm:monoidalBarrBeck}, it suffices to compute $(S^{0,0})^{\wedge}_p[\tau^{-1}] \simeq \Cell \Sing(S^0)^{\wedge}_p$. But the natural map $$(S^{0,0})^{\wedge}_p[\tau^{-1}] \to \Sing(S^0)^{\wedge}_p$$ is a cellular equivalence by the results of \cite{DuggerIsaksenMASS} and \cite{Stahn}.
\end{proof}

Our strategy will be to formally derive the real case from this, by lifting this localization up the $\rho$-completion tower, and combining with Bachmann's Thm.~\ref{thm:Bachmann}.

To this end, we consider the 
isotropy separation recollement on $\Sp^{C_2}$ given by
\[ \begin{tikzcd}[row sep=4ex, column sep=8ex, text height=1.5ex, text depth=0.25ex]
\Sp^{h C_2} \ar[shift right=1]{r}[swap]{j_*} & \Sp^{C_2} \ar[shift left=1]{r}{(-)^{\Phi C_2}} \ar[shift right=1]{l}[swap]{(-)^h} & \Sp \ar[shift left=1]{l}{i_*}.
\end{tikzcd} \]

\begin{lem} \label{lm:VanishingOfWrongWayGluingFunctor} We have equivalences of functors
\begin{align*}
 (\Be^{C_2} \Sing^{C_2} i_*(-))^{h} & \simeq 0, \\
 (\Be^{C_2} \Cell \Sing^{C_2} i_*(-))^h & \simeq 0.
 \end{align*}
\end{lem}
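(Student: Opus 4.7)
The plan is to prove both equivalences by showing that for any $X \in \Sp$, both $\Be^{C_2}\Sing^{C_2} i_*(X)$ and $\Be^{C_2}\Cell\Sing^{C_2} i_*(X)$ have $a \in \pi^{C_2}_{-1,-1} S^0$ acting invertibly. Granting this, the conclusion is immediate from the identification $Y^h \simeq \lim_n Y \wedge C(a^n)$ of (\ref{eq:acompletion}): if $a$ acts invertibly on $Y$, then the cofiber $Y \wedge C(a^n)$ of an equivalence is contractible for every $n$, hence $Y^h \simeq 0$.

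The starting point is that $i_* X$ is geometrically local and so $a$ acts invertibly on $i_* X$ by (\ref{eq:phidetails}). To transfer this to $\rho$-invertibility on $\Sing^{C_2}(i_* X)$, I would invoke the projection formula for $\Be^{C_2} \dashv \Sing^{C_2}$ established in Lem.~\ref{lem:EquivariantSpectraIsModuleCategoryOverMotivicSpectra}. Applied naturally to the map $\rho \colon S^{-1,-1} \to S^{0,0}$ and using that $\Be^{C_2}(\rho) = a$, this furnishes a natural equivalence $\Sigma^{-1,-1}\Sing^{C_2}(Y) \simeq \Sing^{C_2}(\Sigma^{-1+\sigma} Y)$ under which the $\rho$-multiplication on $\Sing^{C_2}(Y)$ is identified with $\Sing^{C_2}$ applied to the $a$-multiplication on $Y$. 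For $Y = i_* X$ the latter is an equivalence, so $\rho$ acts invertibly on $\Sing^{C_2}(i_* X)$.

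For the cellular variant, Lem.~\ref{lem:ExistenceOfCellularization}(3) applied to the dualizable cellular object $S^{-1,-1}$ yields a natural equivalence $\Cell(Z) \wedge S^{-1,-1} \simeq \Cell(Z \wedge S^{-1,-1})$, under which the $\rho$-action on $\Cell(Z)$ is just $\Cell$ of the $\rho$-action on $Z$; hence $\rho$-invertibility passes from $\Sing^{C_2}(i_* X)$ to $\Cell \Sing^{C_2}(i_* X)$. Finally, since $\Be^{C_2}$ is strong symmetric monoidal and sends $\rho$ to $a$, it carries $\rho$-invertible objects to $a$-invertible objects, which completes both cases.

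The only nontrivial ingredient is the projection-formula identification of the $\rho$-action in the second step; once this naturality is in hand the rest is formal bookkeeping, allowing $\rho$ inside $\SH(\RR)$ and $a$ inside $\Sp^{C_2}$ to be handled symmetrically.
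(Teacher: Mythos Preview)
Your argument is correct and rests on the same core observation as the paper's: the object $\Sing^{C_2} i_* X$ has $\rho$ acting invertibly, and this forces the homotopy completion of its Betti realization to vanish. The implementations differ slightly. The paper notes that the essential image of $\Sing^{C_2} i_*$ is already cellular (being generated as a localizing subcategory by $S^{0,0}[\rho^{-1}]$), so that $\Cell \Sing^{C_2} i_* \simeq \Sing^{C_2} i_*$ and the two assertions collapse to one; you instead treat the cellular case separately via Lem.~\ref{lem:ExistenceOfCellularization}(3). For the endgame, the paper reduces to showing the \emph{underlying} spectrum vanishes via $\Res^{C_2}_e \Be^{C_2} \simeq \Be\,\zeta^*$ together with the fact that $\zeta^*$ annihilates $\rho$-inverted objects, whereas you stay on the equivariant side and argue that $a$-invertibility kills $(-)^h$ directly. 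Both routes are short and equivalent in spirit; the paper's cellularity observation is a mild simplification you could incorporate.
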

\begin{proof} Because $S^{0,0}[\rho^{-1}]$ is cellular, the essential image of $$\Sing^{C_2} i_*: \Sp \to \SH(\RR)$$ is cellular as it is generated as a localizing subcategory by $S^{0,0}[\rho^{-1}]$. Therefore, $$\Cell \Sing^{C_2} i_* \simeq \Sing^{C_2} i_*,$$ so we may ignore cellularization in the proof. Because for $E \in \Sp^{C_2}$, $E^h \simeq 0$ if and only if $\Res^{C_2}_e E \simeq 0$, it suffices to show that $$\Res^{C_2}_e \Be^{C_2} \Sing^{C_2} i_* \simeq 0.$$ Because $\Res^{C_2}_e \Be^{C_2} \simeq \Be \zeta^{\ast}$ for $$\zeta: \Spec \CC \to \Spec \RR,$$ this follows from the observation that $$\zeta^{\ast}: \SH(\RR) \to \SH(\CC)$$ vanishes on $\rho$-inverted objects. 
\end{proof}

\begin{lem} \label{lm:EquivalenceOfRightWayGluingFunctors} The natural transformations
\begin{align*}
(\Be^{C_2} \Sing^{C_2} j_{\ast}(-))^{\Phi C_2} & \to  (j_{\ast}(-))^{\Phi C_2}, \\
(\Be^{C_2} \Cell \Sing^{C_2} j_{\ast}(-))^{\Phi C_2} & \to (j_{\ast}(-))^{\Phi C_2}
\end{align*}
induced by the counits of the adjunctions 
\begin{gather*}
\epsilon: \Be^{C_2} \Sing^{C_2} \to \id,  \: \mr{and} \\
\epsilon': \Cell \Be^{C_2} \Sing^{C_2} \to \id
\end{gather*}
 are equivalences.
\end{lem}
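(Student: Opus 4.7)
The plan is to use Bachmann's Thm.~\ref{thm:Bachmann} to identify $(-)^{\Phi C_2} \circ \Be^{C_2}$ with real Betti realization $\Be_\RR$ and hence with $\rho$-inversion on $\SH(\RR)$, and then to reduce the claim to showing that $\Sing^{C_2}$ carries the geometric localization map $V \to i_\ast V^{\Phi C_2}$ to a $\rho$-equivalence, for $V = j_\ast Y$ (and in fact for any $V \in \Sp^{C_2}$). Naturality of the counit $\epsilon: \Be^{C_2} \Sing^{C_2} \to \id$ along the map $V \to i_\ast V^{\Phi C_2}$ yields a commutative square whose image under $(-)^{\Phi C_2}$ factors $(\epsilon_V)^{\Phi C_2}$ as $\Be_\RR$ applied to $\Sing^{C_2}(V) \to \Sing^{C_2}(i_\ast V^{\Phi C_2})$, followed by the counit $\epsilon'_{V^{\Phi C_2}}$ of the composite adjunction $\Be_\RR \dashv \Sing^{C_2} i_\ast$ (the right-hand vertical of the square becomes an equivalence after applying $(-)^{\Phi C_2}$, since $i^\ast i_\ast \simeq \id$). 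Because $\Sing^{C_2} i_\ast$ is fully faithful (as observed in the proof of Lem.~\ref{lm:VanishingOfWrongWayGluingFunctor}), $\epsilon'$ is an equivalence, and it suffices to show that
\[ \Sing^{C_2}\bigl(V \wedge (EC_2)_+\bigr) = \fib\bigl(\Sing^{C_2}(V) \to \Sing^{C_2}(i_\ast V^{\Phi C_2})\bigr) \]
vanishes after inverting $\rho$.

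For this, I would use the standard colimit presentation $(EC_2)_+ = \colim_n S(n\sigma)_+$, whose successive subquotients $S((n+1)\sigma)_+/S(n\sigma)_+$ are free $C_2$-cells, i.e.\ suspensions of $\Sigma^\infty_+ C_2 \simeq \Ind_e^{C_2}(S)$. Smashing with $V$ and applying the projection formula $V \wedge \Ind_e^{C_2}(-) \simeq \Ind_e^{C_2}(V^e \wedge -)$, the associated filtration on $V \wedge (EC_2)_+$ has subquotients of the form $\Sigma^n \Ind_e^{C_2}(V^e)$. Using that $\Sing^{C_2}$ preserves colimits by Lem.~\ref{lem:EquivariantSpectraIsModuleCategoryOverMotivicSpectra}, together with the identification $\Sing^{C_2} \Ind_e^{C_2} \simeq \zeta_\ast \Sing$ coming from the commuting diagram of adjoints in the introduction, it follows that $\Sing^{C_2}(V \wedge (EC_2)_+)$ is built by iterated colimits and extensions from objects of the form $\Sigma^n \zeta_\ast \Sing(V^e)$.

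The key calculation is then that for any $X \in \SH(\CC)$, the action of $\rho$ on $\zeta_\ast(X)$ is null: since $\zeta$ is finite etale, the projection formula $\zeta_\ast(X) \wedge Z \simeq \zeta_\ast(X \wedge \zeta^\ast Z)$ identifies multiplication by $\rho$ on $\zeta_\ast X$ with $\zeta_\ast$ of multiplication by $\zeta^\ast \rho = 0 \in \SH(\CC)$. Consequently $\zeta_\ast(X)[\rho^{-1}] \simeq 0$, and since $(-)[\rho^{-1}]$ is exact and preserves colimits, we conclude $\Sing^{C_2}(V \wedge (EC_2)_+)[\rho^{-1}] \simeq 0$, completing the proof of the first statement.

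For the cellular statement, the same argument applies with $\Sing^{C_2}$ replaced by $\Cell \Sing^{C_2}$: $\Cell$ preserves colimits by Lem.~\ref{lem:ExistenceOfCellularization}(1) and commutes with smashing with the cellular sphere $S^{-1,-1}$ by Lem.~\ref{lem:ExistenceOfCellularization}(3), hence preserves $\rho$-nullity; and $\Cell \Sing^{C_2} i_\ast \simeq \Sing^{C_2} i_\ast$ as already noted (the image of $\Sing^{C_2} i_\ast$ is generated by $S^{0,0}[\rho^{-1}]$, hence cellular), so the concluding $\epsilon'$-step goes through without change. I expect the main care required to be in the opening bookkeeping step, namely constructing the square identifying $(\epsilon_V)^{\Phi C_2}$ with the composite through $\epsilon'$; while this is essentially formal, keeping the triangle identities straight for the two composable adjunctions is the sort of point where one must be attentive.
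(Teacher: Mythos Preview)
Your proof is correct, but it takes a different route from the paper's. The paper's argument is shorter and leans on the projection formula for $(\Be^{C_2},\Sing^{C_2})$ established in Lem.~\ref{lem:EquivariantSpectraIsModuleCategoryOverMotivicSpectra}: writing $\widetilde{EC_2} = \Be^{C_2}(S^{0,0}[\rho^{-1}])$, one has $\widetilde{EC_2} \wedge \Be^{C_2}\Sing^{C_2} Y \simeq \Be^{C_2}\Sing^{C_2}(Y \wedge \widetilde{EC_2})$, so that $\widetilde{EC_2} \wedge \epsilon_Y$ is identified with $\epsilon_{Y \wedge \widetilde{EC_2}}$. Since $Y \wedge \widetilde{EC_2} = i_\ast(Y^{\Phi C_2})$, one is reduced to checking that the counit is an equivalence on objects of the form $i_\ast Z$; and this follows immediately from Lem.~\ref{lm:VanishingOfWrongWayGluingFunctor} (which kills the $(EC_2)_+$-part of $\Be^{C_2}\Sing^{C_2} i_\ast Z$) together with full faithfulness of $\Sing^{C_2} i_\ast$. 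By contrast, you reduce via naturality of $\epsilon$ to showing that $\Be_\RR$ kills $\Sing^{C_2}\big((EC_2)_+ \wedge V\big)$, and then prove this by the cell filtration of $(EC_2)_+$, the identification $\Sing^{C_2}\Ind_e^{C_2} \simeq \zeta_\ast \Sing$, and the projection formula for $\zeta_\ast$ combined with $\zeta^\ast \rho \simeq 0$. Your argument is more explicit and self-contained (in effect it re-derives the relevant content of Lem.~\ref{lm:VanishingOfWrongWayGluingFunctor} on the motivic side), while the paper's is slicker and avoids the cell filtration by recycling the already-proven vanishing lemma and the $(\Be^{C_2},\Sing^{C_2})$ projection formula.
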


\begin{proof} We first consider the non-cellular assertion. Let $X \in \Sp^{h C_2}$ and $Y = j_{\ast} X$. Since $i_{\ast}$ is fully faithful, it suffices to prove that
\[ i_{\ast} \left( [ \Be^{C_2} \Sing^{C_2} Y]^{\Phi C_2} \right) = \widetilde{EC_2} \wedge \Be^{C_2} \Sing^{C_2} Y \to i_{\ast} (Y^{\Phi C_2}) = \widetilde{EC_2} \wedge Y \] is an equivalence. For this, first note that because $\widetilde{EC_2} = \Be^{C_2}(S^{0,0}[\rho^{-1}])$, using that $\Be^{C_2}$ is strong monoidal and the projection formula we have equivalences
\begin{align*} \widetilde{EC_2} \wedge \Be^{C_2} \Sing^{C_2} j_{\ast} (X) & \simeq \Be^{C_2} (\Sing^{C_2} (X) \wedge S^{0,0}[\rho^{-1}]) \\
 & \simeq \Be^{C_2} \Sing^{C_2} (X \wedge \widetilde{EC_2})
\end{align*}
under which $\widetilde{EC_2} \wedge \epsilon_Y$ is identified with $\epsilon_{Y \wedge \widetilde{EC_2}}$. Next, by 
Lem.~\ref{lm:VanishingOfWrongWayGluingFunctor} and the fact that $\Sing^{C_2} i_{\ast}$ is fully faithful, for any $Z \in \Sp$ the fiber sequence of functors
\[ (EC_2)_+ \wedge- \to \id \to \td{EC}_2 \wedge - \]
applied to $\Be^{C_2} \Sing^{C_2} i_{\ast} Z$ yields the equivalence
\[  \Be^{C_2} \Sing^{C_2} i_{\ast} Z \to \td{EC}_2 \wedge \Be^{C_2} \Sing^{C_2} i_{\ast} Z \simeq i_{\ast} Z. \]
In particular, the counit $$\Be^{C_2} \Sing^{C_2} (X \wedge \widetilde{EC_2}) \to X \wedge \widetilde{EC_2}$$ is an equivalence.

Finally, the cellular assertion is proven in the same way, using now that $S^{0,0}[\rho^{-1}]$ is cellular and $\Be^{C_2} \dashv \Cell \Sing^{C_2}$ is a monoidal adjunction that satisfies the projection formula by Lem.~\ref{lm:CellularProjectionFormula}.
\end{proof}

We have almost assembled all of the ingredients needed to prove Thm. \ref{thm:C2-SingFullyFaithful}. In view of 
Lem.~\ref{lem:RecollementFullyFaithful}, it only remains to prove the full faithfulness of $\Sing^{C_2}$ on the Borel part of the recollement, which we turn to now.

Because we have
$\Be^{C_2}(C(\rho)) = C(a)$ for the Euler class 
$$a: S^{-\sigma} \to S^0$$ 
and $(\Sp^{C_2})^{\wedge}_a \simeq \Sp^{h C_2}$ (\ref{eq:acompletion}), we obtain the induced adjunction
\[ \adjunct{\widehat\Be {}_p^{h C_2}}{\SH_{\cell}(\RR)^{\wedge}_{p,\rho}}{(\Sp^{h C_2})^{\wedge}_p}{\Cell \Sing^{h C_2}} \]
as in Lem. \ref{lem:PassingLocalizationToModuleCat}.

\begin{cor} \label{cor:BorelFullyFaithful} The functor $\Cell \Sing^{h C_2}$ is fully faithful.
\end{cor}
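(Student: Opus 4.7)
The plan is to invoke Proposition~\ref{prop:FullyFaithfulLift} applied to the adjunction $\widehat\Be{}_p^{C_2} \dashv \Cell \Sing^{C_2}$ between $\SH_{\cell}(\RR)^{\wedge}_p$ and $(\Sp^{C_2})^{\wedge}_p$, with the dualizable $E_\infty$-algebra $A = C(\rho)$ of Rmk.~\ref{rmk:CrhoEoo}. The $A$-local objects are by Example~\ref{ex:Cx} the $\rho$-complete cellular $p$-complete real motivic spectra, i.e., $\SH_{\cell}(\RR)^{\wedge}_{p,\rho}$. Since $\widehat\Be{}_p^{C_2}(\rho) = a$, we have $A' = \widehat\Be{}_p^{C_2}(C(\rho)) \simeq C(a)$, whose localization on the equivariant side is the homotopy-complete subcategory $(\Sp^{hC_2})^{\wedge}_p$ by the equivalence (\ref{eq:acompletion}). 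Thus the functor $R_\infty$ in the notation of Prop.~\ref{prop:FullyFaithfulLift} is precisely the $\Cell \Sing^{hC_2}$ under consideration, and it suffices to verify the three hypotheses of that proposition.

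The hypotheses that $\Cell \Sing^{C_2}$ preserves colimits and that $(\widehat\Be{}_p^{C_2}, \Cell \Sing^{C_2})$ satisfies the projection formula were already established in the verification that this adjunction satisfies the hypotheses of monoidal Barr-Beck (using Lem.~\ref{lm:CellularProjectionFormula} to pass to the cellular setting and Lem.~\ref{lem:PassingLocalizationToModuleCat} to pass to the $p$-complete setting). The remaining hypothesis is that the right adjoint $R_1$ of the induced monoidal adjunction on modules,
\[
F_1 \colon \Mod_{\SH_{\cell}(\RR)^{\wedge}_p}(C(\rho)) \rightleftarrows \Mod_{(\Sp^{C_2})^{\wedge}_p}(C(a)) \colon R_1,
\]
is fully faithful.

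To verify this, I would identify each module category with a familiar one and thereby reduce to the complex case. On the motivic side, Corollary~\ref{cor:CellularBarrBeck} gives a symmetric monoidal equivalence $\Mod_{\SH_{\cell}(\RR)^{\wedge}_p}(C(\rho)) \simeq \SH_{\cell}(\CC)^{\wedge}_p$. On the equivariant side, the equivalence $C(a) \simeq \Sigma^{1-\sigma}\Sigma^{\infty}_+ C_2$ from (\ref{eq:CaC2}), combined with monoidal Barr-Beck for the restriction-induction adjunction $\Res^{C_2}_e \dashv \Ind^{C_2}_e$, yields $\Mod_{(\Sp^{C_2})^{\wedge}_p}(C(a)) \simeq \Sp^{\wedge}_p$. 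Under these identifications, together with the naturality $\Res^{C_2}_e \circ \widehat\Be{}_p^{C_2} \simeq \widehat\Be_p \circ \zeta^{\ast}$, the induced adjunction $F_1 \dashv R_1$ corresponds to the complex Betti realization adjunction $\widehat\Be_p \dashv \Cell \Sing$, which by Theorem~\ref{thm:ComplexSingFullyFaithful} is an adjoint equivalence. In particular $R_1$ is fully faithful, and Proposition~\ref{prop:FullyFaithfulLift} with $n = \infty$ then yields the desired conclusion.

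The main technical obstacle is the equivariant identification $\Mod_{(\Sp^{C_2})^{\wedge}_p}(C(a)) \simeq \Sp^{\wedge}_p$: one must show that the two natural $E_\infty$-algebra structures on the underlying spectrum $\Sigma^{1-\sigma}\Sigma^{\infty}_+ C_2$---namely via the cofiber presentation of $C(a)$ and via induction from $S^0$---yield equivalent module categories, which amounts to exhibiting a compatible $E_\infty$-map and verifying it is an equivalence. Once this monoidal identification is in place, the rest of the argument is a formal assembly of Prop.~\ref{prop:FullyFaithfulLift}, Cor.~\ref{cor:CellularBarrBeck}, and Thm.~\ref{thm:ComplexSingFullyFaithful}.
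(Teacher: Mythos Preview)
Your approach is essentially the paper's: the proof there reads ``Combine Thm.~\ref{thm:ComplexSingFullyFaithful}, Cor.~\ref{cor:CellularBarrBeck}, and Prop.~\ref{prop:FullyFaithfulLift},'' and you have correctly unpacked what this combination means. One harmless slip: the complex Betti adjunction $\widehat\Be_p \dashv \Cell\Sing$ is not an adjoint equivalence---Thm.~\ref{thm:ComplexSingFullyFaithful} only gives that $\Cell\Sing$ is fully faithful---but that is exactly the hypothesis you need for $R_1$.

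Regarding the obstacle you flag: it dissolves once you trace the $E_\infty$-structures through the construction. By Rmk.~\ref{rmk:CrhoEoo}, the $E_\infty$-structure on $C(\rho)^\wedge_p$ is that of $\Cell((\Spec\CC_+)^\wedge_p)$, so the counit $j\,C(\rho)^\wedge_p \to (\Spec\CC_+)^\wedge_p$ is an $E_\infty$-map. Applying the symmetric monoidal functor $\widehat\Be{}_p^{C_2}$ and using $\Be^{C_2}(\Spec\CC_+) \simeq (C_2)_+$ yields an $E_\infty$-map $C(a)^\wedge_p \to (C_2)_+^\wedge_p$ which is an equivalence of underlying spectra (both sides are $\Sigma^{1-\sigma}(C_2)_+$ up to the self-equivalence $S^{1-\sigma}\wedge (C_2)_+ \simeq (C_2)_+$), hence an equivalence of $E_\infty$-algebras. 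This identifies $\Mod_{(\Sp^{C_2})^\wedge_p}(C(a))$ with $\Mod_{(\Sp^{C_2})^\wedge_p}((C_2)_+) \simeq \Sp^\wedge_p$ compatibly with the Betti realization square, and the rest of your argument goes through.
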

\begin{proof} Combine Thm.~\ref{thm:ComplexSingFullyFaithful}, Cor. \ref{cor:CellularBarrBeck}, and Prop. \ref{prop:FullyFaithfulLift}.
\end{proof}

We may now deduce the categorical half of our main theorem, which states that $C_2$-equivariant Betti realization, when restricted to $p$-complete cellular real motivic spectra, is a localization.

\begin{thm} \label{thm:C2-SingFullyFaithful} $\Cell \Sing^{C_2}: (\Sp^{C_2})^{\wedge}_p \to \SH_{\cell}(\RR)^{\wedge}_p$ is fully faithful.
\end{thm}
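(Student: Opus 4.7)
The plan is to apply the recollement criterion of Lemma~\ref{lem:RecollementFullyFaithful} to the $p$-completed isotropy separation recollement on $(\Sp^{C_2})^{\wedge}_p$ by $(\Sp^{hC_2})^{\wedge}_p$ and $\Sp^{\wedge}_p$, equipped with the adjunction $(\widehat\Be{}_p^{C_2}, \Cell\Sing^{C_2})$. The left adjoint $\widehat\Be{}_p^{C_2}$ is left exact because any left adjoint between presentable stable $\infty$-categories preserves finite limits, verifying the hypothesis on $F$ in that lemma.

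The two gluing compatibility conditions required by Lemma~\ref{lem:RecollementFullyFaithful} — that the counit-induced natural transformation
\[ (\widehat\Be{}_p^{C_2}\,\Cell\Sing^{C_2}\, j_*(-))^{\Phi C_2} \to (j_*(-))^{\Phi C_2} \]
is an equivalence, and that $(\widehat\Be{}_p^{C_2}\,\Cell\Sing^{C_2}\, i_*(-))^{h}$ vanishes — are precisely Lemmas~\ref{lm:EquivalenceOfRightWayGluingFunctors} and~\ref{lm:VanishingOfWrongWayGluingFunctor} respectively. Although those lemmas are phrased integrally, the statements transfer to the $p$-completion because $p$-completion is itself a localization, so $p$-complete Betti realization and $p$-complete cellularized $\Sing^{C_2}$ inherit these gluing properties.

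Next I would verify that $\Cell\Sing^{C_2}$ is fully faithful on each piece of the recollement. For the Borel piece, full faithfulness of $\Cell\Sing^{C_2}\, j_*$ is exactly Corollary~\ref{cor:BorelFullyFaithful}. For the geometric piece, Bachmann's Theorem~\ref{thm:Bachmann} identifies $\Be_\RR \simeq (-)^{\Phi C_2}\circ \Be^{C_2}$ with the localization $\SH(\RR)\to \SH(\RR)[\rho^{-1}] \simeq \Sp$, whose right adjoint $\Sing^{C_2}\, i_*$ is therefore fully faithful. Since the essential image of $\Sing^{C_2}\, i_*$ consists of $\rho$-local objects — which form the localizing subcategory generated by the cellular object $S^{0,0}[\rho^{-1}]$ and thus live inside $\SH_{\cell}(\RR)$ — cellularization is the identity on the image, so $\Cell\Sing^{C_2}\, i_* \simeq \Sing^{C_2}\, i_*$ is fully faithful, a property preserved by further $p$-completion.

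The main obstacle in this program is not the present deduction but its preparation: Corollary~\ref{cor:BorelFullyFaithful} required combining the complex-motivic localization of Theorem~\ref{thm:ComplexSingFullyFaithful} (essentially due to Dugger--Isaksen and Stahn) with the $\rho$-completion-tower lifting mechanism of Proposition~\ref{prop:FullyFaithfulLift}, which in turn rests on the computational identification of $C(\rho)$ with $\Spec\CC_+$ after $p$-complete cellularization in Proposition~\ref{prop:cofiberRho}. Granted all of this scaffolding and Bachmann's theorem, the present theorem is a formal consequence of the recollement machinery developed in Section~\ref{sec:recollements}.
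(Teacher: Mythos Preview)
Your proposal is correct and follows essentially the same approach as the paper: apply Lemma~\ref{lem:RecollementFullyFaithful} to the isotropy separation recollement, using Lemmas~\ref{lm:VanishingOfWrongWayGluingFunctor} and~\ref{lm:EquivalenceOfRightWayGluingFunctors} for the gluing conditions, Bachmann's Theorem~\ref{thm:Bachmann} for full faithfulness on the geometric piece, and Corollary~\ref{cor:BorelFullyFaithful} for the Borel piece. Your additional remarks (on left exactness, on why cellularization is the identity on the $\rho$-local image, and on the scaffolding behind Corollary~\ref{cor:BorelFullyFaithful}) are accurate elaborations of points the paper leaves implicit in its one-sentence proof.
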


\begin{proof} The conditions of Lem.~\ref{lem:RecollementFullyFaithful} apply in view of Lem.~\ref{lm:VanishingOfWrongWayGluingFunctor}, 
Lem.~\ref{lm:EquivalenceOfRightWayGluingFunctors}, Bachmann's theorem~\ref{thm:Bachmann}, and Cor.~\ref{cor:BorelFullyFaithful}.
\end{proof}

\subsection*{Computing Betti localization}

In the complex case, Theorem~\ref{thm:ComplexSingFullyFaithful} implies that Betti realization can be computed on $p$-complete cellular complex motivic spectra by inverting $\tau \in \pi^{\CC}_{0,-1}(S^{0,0})^{\wedge}_p$.  

We would like a similar result for the $C_2$-Betti realization of a $p$-complete cellular real motivic spectrum.
In the real case, for $X \in \SH(\RR)$, the isotropy separation recollement implies that the homotopy type of the $p$-complete $C_2$-equivariant Betti realization can then be recovered by the pullback:
\begin{equation}\label{eq:isotropy}
\xymatrix{
\widehat\Be{}_p^{C_2} (X) \ar[r] \ar[d] & \widehat\Be{}^{C_2}_p (X)^{\Phi} \ar[d] \\
\widehat\Be{}_p^{C_2} (X)^h \ar[r] & \widehat\Be{}_p^{C_2} (X)^t
}
\end{equation}
Therefore, it suffices to compute $\widehat\Be{}_p^{C_2}(X)^\Phi$, $\widehat\Be{}_p^{C_2}(X)^h$, $\widehat\Be{}_p^{C_2}(X)^t$, and the map 
$$ \widehat\Be{}_p^{C_2}(X)^\Phi \to \widehat\Be{}_p^{C_2}(X)^t. $$

For the geometric localization $\widehat\Be{}_p^{C_2}(X)^\Phi$, Bachmann's theorem~\ref{thm:Bachmann} has the following immediate consequence (which does not require $p$-completion or cellularization).

\begin{thm}\label{thm:Bachmann2}
For $X \in \SH(\RR)$, equivariant Betti realization induces an isomorphism 
$$ \pi^\RR_{*,*}X[\rho^{-1}] \xto{\cong} \pi^{C_2}_{*,*}\Be^{C_2}(X)^\Phi. $$
\end{thm}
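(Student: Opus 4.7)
The plan is to reduce the $RO(C_2)$-graded homotopy groups of $\Be^{C_2}(X)^{\Phi}$ to the $\mathbb{Z}$-graded homotopy groups of $\Be_{\RR}(X)$ via the geometric fixed points equivalence $\Sp^{\Phi C_2} \simeq \Sp$, and then invoke Bachmann's Theorem~\ref{thm:Rbetti} to identify this with $\pi^{\RR}_{\ast,\ast}X[\rho^{-1}]$.

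First, I would argue that the map in the statement is well-defined. Since $\Be^{C_2}$ is a symmetric monoidal left adjoint and $\Be^{C_2}(\rho) = a$, it carries $\rho$-periodic objects to $a$-periodic objects; by (\ref{eq:Phi}), the $a$-periodic objects are exactly the geometrically local spectra, so $\Be^{C_2}(X[\rho^{-1}]) \simeq \Be^{C_2}(X)[a^{-1}] \simeq \Be^{C_2}(X)^{\Phi}$. Applying $\Be^{C_2}$ to a morphism $S^{i,j} \to X[\rho^{-1}]$ then produces a morphism $S^{(i-j)+j\sigma} \to \Be^{C_2}(X)^{\Phi}$, which yields the natural homomorphism of the theorem.

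Next, I would compute the target. The geometric fixed points functor restricts to an equivalence $(-)^{\Phi C_2} \colon \Sp^{\Phi C_2} \xto{\simeq} \Sp$, under which $(S^{\sigma})^{\Phi C_2} \simeq S^0$ (equivalently, $a$ is carried to the identity of $S^0$). Consequently, for any $Y \in \Sp^{\Phi C_2}$,
\[
\pi^{C_2}_{i,j}Y \;=\; [S^{(i-j)+j\sigma}, Y]^{C_2} \;\cong\; [S^{i-j}, Y^{\Phi C_2}] \;=\; \pi_{i-j}Y^{\Phi C_2}.
\]
Applying this to $Y = \Be^{C_2}(X)^{\Phi}$ and using $(\Be^{C_2}(X)^{\Phi})^{\Phi C_2} \simeq \Be^{C_2}(X)^{\Phi C_2} = \Be_{\RR}(X)$ gives a natural isomorphism $\pi^{C_2}_{i,j}\Be^{C_2}(X)^{\Phi} \cong \pi_{i-j}\Be_{\RR}(X)$.

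Finally, I would observe that under this identification the natural map of the theorem factors as
\[
\pi^{\RR}_{i,j}X[\rho^{-1}] \;\longrightarrow\; \pi^{C_2}_{i,j}\Be^{C_2}(X)^{\Phi} \;\xto{(-)^{\Phi C_2}}\; \pi_{i-j}\Be_{\RR}(X),
\]
whose composite is induced by $(-)^{\Phi C_2} \circ \Be^{C_2} = \Be_{\RR}$ and is therefore the Bachmann isomorphism of Theorem~\ref{thm:Rbetti}. Since the second arrow is an isomorphism by the previous paragraph, so is the first. I do not expect any genuine obstacle here: all the nontrivial input comes from Bachmann's theorem, and the work is purely bookkeeping through the bigrading using $(S^{\sigma})^{\Phi C_2} \simeq S^0$ and the identifications $Y^{\Phi} \simeq Y[a^{-1}]$ and $\Be_{\RR} \simeq (-)^{\Phi C_2} \circ \Be^{C_2}$ that are already in hand.
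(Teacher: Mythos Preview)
Your proposal is correct and follows essentially the same route as the paper's proof: both reduce $\pi^{C_2}_{i,j}\Be^{C_2}(X)^{\Phi}$ to $\pi_{i-j}\Be_{\RR}(X)$ via the geometric fixed points equivalence (using $(S^{\sigma})^{\Phi C_2}\simeq S^0$ and $\Be_{\RR}\simeq (-)^{\Phi C_2}\circ \Be^{C_2}$) and then invoke Bachmann's theorem. You are simply more explicit about well-definedness and about why the composite recovers the Bachmann map, whereas the paper records the same chain of isomorphisms in a single display.
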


\begin{proof}
Using Bachmann's theorem~\ref{thm:Bachmann}, we have
\begin{align*}
\pi^{\RR}_{i,j} X[\rho^{-1}]
& \cong \pi_{i-j}\Be_\RR(X) \\
& \cong \pi_{i-j}\Be^{C_2}(X)^{\Phi C_2} \\
& \cong \pi^{C_2}_{i,j}\Be^{C_2}(X)^{\Phi}.
\end{align*}
\end{proof}

We will now show that if $X$ is $p$-complete and cellular, the $p$-complete homotopy completion $\widehat\Be{}_p^{C_2}(X)^h$ can be computed by inverting $\tau$ on the $\rho$-completion tower.  The Tate spectrum $\widehat\Be{}_p^{C_2}(X)^t$ may then computed by inverting $\rho$ on the $\tau$-inverted $\rho$-completion.

Let us now describe in detail how to invert $\tau$ on the $\rho$-completion tower.
For every $n$, we have adjunctions
\[ \adjunct{\widehat\Be{}_p^{C_2,n}}{\Mod_{\SH_{\cell}(\RR)^{\wedge}_p}(C(\rho^n))}{\Mod_{(\Sp^{C_2})^{\wedge}_p}(C(a^n))}{\Sing^{C_2,n}} \]
where $\Sing^{C_2,n}$ is fully faithful by Thm.~\ref{thm:ComplexSingFullyFaithful} and Prop. \ref{prop:FullyFaithfulLift}. The self-map $\tau_N$ of $C(\rho^n)^\wedge_p$ constructed in Section~\ref{sec:tau} (where we take $\tau_N := \tau^2$ there in the case of $p$ odd) allows us to explicitly compute the resulting localization functor in terms of $\tau_N$-localization, as stated in the next lemma.  

\begin{lem}\label{lem:Crhonloc}
For $X \in \Mod_{\SH_{\cell}(\RR)^{\wedge}_p}(C(\rho^n))$, we have 
$$ \Sing^{C_2,n} \widehat\Be{}_p^{C_2,n} X \simeq X[\tau_N^{-1}]. $$
Thus, the image of the fully faithful right adjoint
$$ \Sing^{C_2,n} : \Mod_{(\Sp^{C_2})^{\wedge}_p}(C(a^n)) \to \Mod_{\SH_{\cell}(\RR)^{\wedge}_p}(C(\rho^n)) $$
consists of those $p$-complete cellular $C(\rho^n)$-modules on which multiplication by $\tau_N$ is an equivalence.
\end{lem}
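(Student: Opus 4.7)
The plan is to identify the composite $\Sing^{C_2,n} \widehat\Be{}_p^{C_2,n}$ with the $\tau_N$-Bousfield localization. The key structural input is the projection formula for the monoidal adjunction $\widehat\Be{}_p^{C_2,n} \dashv \Sing^{C_2,n}$, which is inherited from the projection formula for $\widehat\Be{}_p^{C_2} \dashv \Cell \Sing^{C_2}$ via Lem.~\ref{lm:CellularProjectionFormula}, Lem.~\ref{lem:PassingLocalizationToModuleCat}, and Lem.~\ref{lem:ModuleProjectionFormula}. Applying this projection formula with $A = C(a^n)$ will yield
\[
\Sing^{C_2,n} \widehat\Be{}_p^{C_2,n}(X) \simeq \Sing^{C_2,n}(C(a^n)) \wedge_{C(\rho^n)} X,
\]
while by construction $X[\tau_N^{-1}] \simeq C(\rho^n)[\tau_N^{-1}] \wedge_{C(\rho^n)} X$. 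The problem thus reduces to constructing a $C(\rho^n)$-linear equivalence
\[
C(\rho^n)[\tau_N^{-1}] \xrightarrow{\simeq} \Sing^{C_2,n}(C(a^n)).
\]

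The comparison map arises as follows. By Thm.~\ref{thm:tauselfmap} for $p=2$, or Prop.~\ref{prop:tauodd} for $p$ odd, $\widehat\Be{}_p^{C_2,n}(\tau_N)$ is the $u$-self map on $C(a^n)$, which by Thm.~\ref{thm:uselfmap} (respectively, by multiplication by an invertible element of $\pi^{C_2}_{0,*} S^h$) is an equivalence. Since $\Sing^{C_2,n}$ is lax monoidal, the unit $C(\rho^n) \to \Sing^{C_2,n}(C(a^n))$ is a map of $E_{\infty}$-algebras under which $\tau_N$ maps to a unit; multiplication by $\tau_N$ therefore acts as an equivalence on $\Sing^{C_2,n}(C(a^n))$, and the unit will factor canonically through $C(\rho^n)[\tau_N^{-1}]$.

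Both sides are $p$-complete cellular $C(\rho^n)$-modules, so to show this comparison map is an equivalence it suffices to check that it induces an isomorphism on $\pi^{\RR}_{*,*}$. Adjunction identifies $\pi^{\RR}_{*,*} \Sing^{C_2,n}(C(a^n))$ with $\pi^{C_2}_{*,*} C(a^n)$, reducing the claim to showing the $\tau_N$-inverted Betti realization map
\[
\pi^{\RR}_{i,j} C(\rho^n)[\tau_N^{-1}] \to \pi^{C_2}_{i,j} C(a^n)
\]
is an isomorphism in every bidegree. The Dugger--Isaksen isomorphism (Thm.~\ref{thm:DuggerIsaksen}) combined with a five-lemma applied to the cofiber sequences defining $C(\rho^n)$ and $C(a^n)$, exactly as in the proof of Thm.~\ref{thm:tauselfmap}, will yield an isomorphism $\pi^{\RR}_{i,j} C(\rho^n) \cong \pi^{C_2}_{i,j} C(a^n)$ in a half-plane of the form $i \geq 3j + C$ for some constant $C$ depending on $n$. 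Since $\tau_N$-multiplication sends $(i,j) \mapsto (i,j-N)$ and $u_N$ is already a unit on the right, iterating $\tau_N$ brings any fixed bidegree into this half-plane, producing the isomorphism in every bidegree after localization.

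The hardest step will be this final $\pi^{\RR}_{*,*}$-computation: the categorical reductions are routine, but one must carefully track how the Dugger--Isaksen half-plane shifts when one passes from the sphere to the $n$-fold cofibers $C(\rho^n)$ and $C(a^n)$, and verify that $\tau_N$-periodicity exhausts every bidegree. The rest of the argument is formal, as the desired characterization of the essential image of $\Sing^{C_2,n}$ as the $\tau_N$-local subcategory will follow immediately from the equivalence $\Sing^{C_2,n} \widehat\Be{}_p^{C_2,n}(X) \simeq X[\tau_N^{-1}]$ together with full faithfulness of $\Sing^{C_2,n}$.
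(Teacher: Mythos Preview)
Your formal reductions match the paper's exactly: the projection formula reduces the statement to exhibiting an equivalence $C(\rho^n)[\tau_N^{-1}] \simeq \Sing^{C_2,n}(C(a^n))$, and your construction of the comparison map via invertibility of $u_N = \widehat\Be{}_p^{C_2,n}(\tau_N)$ is the same. The divergence is only in how you verify that this comparison map is an equivalence.

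The paper does not compute $\pi^{\RR}_{*,*}$ directly. Instead it observes that base-change $-\wedge_{C(\rho^n)} C(\rho)$ is conservative (Lem.~\ref{lem:BaseChangeConservative}), so it suffices to check the map becomes an equivalence after tensoring down to $C(\rho)$-modules. There the claim is that $\tau_N$ restricts to $\tau^N$ (up to a unit) on $C(\rho)$, which follows from the Hurewicz image, and then the equivalence $C(\rho)[\tau^{-1}] \simeq \Sing^{C_2,1}(C(a))$ is exactly Thm.~\ref{thm:ComplexSingFullyFaithful} transported along Cor.~\ref{cor:CellularBarrBeck}. This is cleaner, works uniformly in $p$, and is consistent with the paper's strategy of bootstrapping everything from the complex case.

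Your direct computation via Dugger--Isaksen and the five-lemma is correct at $p=2$: the half-plane $i \geq 3j + 2n - 4$ suffices, and $\tau_N$-periodicity does exhaust all bidegrees as you describe. However, Thm.~\ref{thm:DuggerIsaksen} as stated in the paper is a $2$-primary result (Section~\ref{sec:tau} is implicitly $2$-complete until its final subsection), so your argument has a gap at odd primes. You would need a separate treatment there---not hard, given the splitting (\ref{eq:rhosplitting}) and Prop.~\ref{prop:tauodd}, but not what you wrote. The paper's conservativity reduction sidesteps this by never touching bigraded homotopy groups beyond bidegree $(0,*)$.
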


\begin{proof}
For brevity, we implicitly assume everything is $p$-complete in this proof.
We claim the self-maps $\tau_N$ satisfy
\begin{enumerate}
    \item $\widehat\Be{}_p^{C_2,n}(\tau_N) = u_N$ is an self-equivalence of $C(a^n)$,
    \vspace{10pt}
    \item $C(\rho^n)[\tau_N^{-1}] \wedge_{C(\rho^n)} C(\rho) \simeq C(\rho)[\tau^{-1}]. $
\end{enumerate}
Statement (1) is proven in Thm.~\ref{thm:uselfmap} (for $p=2$) and Prop.~\ref{prop:tauodd} (for $p$ odd).  For statement (2), it suffices to show that the composite
$$ \Sigma^{0,-N}C(\rho) \simeq \Sigma^{0,-N} C(\rho^n) \wedge_{C(\rho^n)} C(\rho) \xto{\tau_N \wedge 1} C(\rho^n) \wedge_{C(\rho^n)} C(\rho) \simeq C(\rho) $$
is equal to $\tau^N$, up to multiplication by a unit.  However, by Cor.~\ref{cor:CellularBarrBeck}, we have
$$ \pi^{\RR}_{0,*}C(\rho) \cong \pi^\CC_{0,*} (S^{0,0})^{\wedge}_p \cong \ZZ_p[\tau]. $$
In particular, the Hurewicz homomorphism
$$ \pi^{\RR}_{0,*}C(\rho) \to (H\FF_p)^\RR_{0,*}C\rho \cong \FF_p[\tau] $$
is given by the obvious surjection, and the result follows from the fact that $\tau_N$ induces multiplication by $\tau^N$ on homology.

By (1), we have a comparison map 
$$C(\rho^n)[\tau_N^{-1}] \to \Sing^{C_2,n}(C(a^n))$$ 
adjoint to the equivalence $$C(a^n)[u_N^{-1}] \simeq C(a^n).$$ 
After base-change to $C(\rho)$, this map is an equivalence by (2), hence is an equivalence as $- \wedge_{C(\rho^n)} C(\rho)$ is conservative. Because the adjunctions in question also satisfy the hypotheses of Thm. \ref{thm:monoidalBarrBeck}, we have that
\begin{align*}
\Sing^{C_2,n} \widehat\Be{}_p^{C_2,n} X 
& \simeq \Sing^{C_2,n} ((\widehat\Be{}_p^{C_2,n} X) \wedge_{C(a^n)} C(a^n)) \\
& \simeq \Sing^{C_2,n}(C(a^n)) \wedge_{C(\rho^n)} X \\
& \simeq X[\tau_N^{-1}].
\end{align*}
\end{proof}

For $p$ odd, every $X \in \SH(\RR)^{\wedge}_p$ has a $\tau^2$ self-map on its $\rho$-completion, and we can therefore form the telescope
$$ X^\wedge_\rho[\tau^{-1}] := X^\wedge_\rho[\tau^{-2}]. $$
For $p=2$, because the periodicity of the elements $\tau_N$ increases as $n \to \infty$, we do not have an analogous construction.
Nevertheless, given $X \in \SH_{\cell}(\RR)^{\wedge}_p$, the equivalences of Lemma~\ref{lem:Crhonloc} allow us to define maps
\begin{align*}
X \wedge C(\rho^n)[\tau_N^{-1}] & \simeq \Sing^{C_2,n} \widehat\Be{}_p^{C_2,n} X \wedge C(\rho^n) \\
& \to \Sing^{C_2,n-1} \widehat\Be{}_p^{C_2,n-1} X \wedge C(\rho^{n-1}) \\
& \simeq X \wedge C(\rho^{n-1})[\tau^{-1}_{N'}].
\end{align*}
We may therefore define
$$ X^{\wedge}_\rho[\tau^{-1}] := \lim_n X \wedge C(\rho^n)[\tau_N^{-1}]. $$

We are now ready to deduce the computational half of our main theorem.

\begin{thm}\label{thm:main}
For $X \in \SH_{\cell}(\RR)^{\wedge}_p$, we have
\[ \Cell \Sing^{h C_2} \widehat\Be{}_p^{h C_2} X^{\wedge}_{\rho} \simeq X^{\wedge}_\rho [\tau^{-1}]. \]
and $C_2$-Betti realization induces an isomorphism
$$ \pi^\RR_{*,*} X^{\wedge}_\rho[\tau^{-1}] \xto{\cong} \pi^{C_2}_{*,*} \widehat\Be{}_p^{C_2}(X)^h. $$
\end{thm}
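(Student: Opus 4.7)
The main task is the first equivalence $\Cell \Sing^{hC_2} \widehat\Be{}_p^{hC_2}(X^{\wedge}_\rho) \simeq X^{\wedge}_\rho[\tau^{-1}]$; the homotopy group statement will follow by adjunction. The plan is to work levelwise in the $\rho$-completion tower and assemble the stages, using the full faithfulness of $\Cell \Sing^{hC_2}$ from Cor.~\ref{cor:BorelFullyFaithful} to recognize the target as the Borel-local reflection.

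By Prop.~\ref{prop:DoldKan}, applied both to $\rho$-completion on the $\RR$-motivic side and to $a$-completion on the $C_2$-equivariant side, we have
\[ \SH_{\cell}(\RR)^{\wedge}_{p,\rho} \simeq \lim_n \Mod_{\SH_{\cell}(\RR)^{\wedge}_p}(C(\rho^n)), \quad (\Sp^{hC_2})^{\wedge}_p \simeq \lim_n \Mod_{(\Sp^{C_2})^{\wedge}_p}(C(a^n)), \]
and under these identifications the adjunction $\widehat\Be{}_p^{hC_2} \dashv \Cell \Sing^{hC_2}$ is the limit of the levelwise adjunctions $\widehat\Be{}_p^{C_2,n} \dashv \Sing^{C_2,n}$. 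Under the tower equivalence, $X^{\wedge}_\rho$ corresponds to the tower $\{X \wedge C(\rho^n)\}_n$, and applying $\Sing^{C_2,n} \widehat\Be{}_p^{C_2,n}$ levelwise yields $\{X \wedge C(\rho^n)[\tau_N^{-1}]\}_n$ by Lem.~\ref{lem:Crhonloc}; the limit of this tower is $X^{\wedge}_\rho[\tau^{-1}]$ by definition. Since each $\Sing^{C_2,n}$ is fully faithful (Prop.~\ref{prop:FullyFaithfulLift}), the identification carries $\Cell \Sing^{hC_2} \widehat\Be{}_p^{hC_2}(X^{\wedge}_\rho)$ to $X^{\wedge}_\rho[\tau^{-1}]$.

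For the homotopy group isomorphism, cellularity of $S^{i,j}$ means mapping out of it is unaffected by $\Cell$, so the adjunction $\widehat\Be{}_p^{hC_2} \dashv \Sing^{hC_2}$ gives
\[ \pi^{\RR}_{i,j}(X^{\wedge}_\rho[\tau^{-1}]) \cong [\widehat\Be{}_p^{hC_2}(S^{i,j}), \widehat\Be{}_p^{hC_2}(X^{\wedge}_\rho)]^{hC_2}. \]
Now $\widehat\Be{}_p^{hC_2}(S^{i,j})$ is the $h$-completion of $S^{(i-j)+j\sigma}$, and $\widehat\Be{}_p^{hC_2}(X^{\wedge}_\rho) \simeq \widehat\Be{}_p^{C_2}(X)^h$ because $\Be^{C_2}$ carries $\rho$ to $a$ and $a$-completion agrees with $h$-completion by (\ref{eq:acompletion}). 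The universal property of $h$-completion identifies the right-hand side with $\pi^{C_2}_{i,j}(\widehat\Be{}_p^{C_2}(X)^h)$.

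The main obstacle will be the careful handling of the $\tau$-tower at $p = 2$: since the period $N = 2^{\gamma(n-1)}$ of the $\tau_N$-self-map on $C(\rho^n)^{\wedge}_2$ grows with $n$, the self-maps are not directly compatible across stages. The transition maps in the tower $\{X \wedge C(\rho^n)[\tau_N^{-1}]\}_n$ must therefore be constructed through $\Sing^{C_2,n}$ using Lem.~\ref{lem:Crhonloc}, rather than by attempting to align the $\tau_N$'s on the nose; this is precisely how $X^{\wedge}_\rho[\tau^{-1}]$ is defined in the text preceding the theorem statement. At odd primes the subtlety disappears, since a single $\tau^2$-self-map suffices uniformly by Prop.~\ref{prop:tauodd}.
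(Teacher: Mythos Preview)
Your proposal is correct and follows essentially the same route as the paper: both arguments decompose $\Cell \Sing^{hC_2}\widehat\Be{}_p^{hC_2}(X^\wedge_\rho)$ as $\lim_n \Sing^{C_2,n}\widehat\Be{}_p^{C_2,n}(X \wedge C(\rho^n))$, apply Lem.~\ref{lem:Crhonloc} levelwise, and then invoke the definition of $X^\wedge_\rho[\tau^{-1}]$; the homotopy-group isomorphism is deduced by adjunction in both cases. Your invocation of Prop.~\ref{prop:DoldKan} simply makes explicit the tower identification that the paper uses tacitly, and your remark about the growing $\tau$-periods at $p=2$ correctly isolates why the transition maps in the $\tau$-inverted tower are defined through $\Sing^{C_2,n}$ rather than via chosen self-maps.
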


\begin{proof}
Since
$$ \Cell \Sing^{h C_2} \widehat\Be{}_p^{h C_2} X^{\wedge}_{\rho} \simeq \lim_n 
\Sing^{C_2,n} \widehat\Be{}_p^{C_2,n} X \wedge C(\rho^n), $$
we deduce the first statement from Lemma~\ref{lem:Crhonloc}.
The second statement follows from the adjunction
\begin{align*}
\pi^{C_2}_{i,j}(\widehat\Be{}_p^{C_2}(X)^h) 
& = [\Be^{C_2}S^{i,j}, \widehat\Be{}_p^{C_2}(X)^h]^{C_2} \\
& \cong [S^{i,j}, \Cell \Sing^{C_2} \widehat\Be{}_p^{C_2}(X)^h]_\RR \\
& \cong \pi^\RR_{i,j}X^{\wedge}_\rho[\tau^{-1}].
\end{align*}
\end{proof}

\section{Examples}\label{sec:examples}

\begin{figure}
\centering
\includegraphics[width=\linewidth]{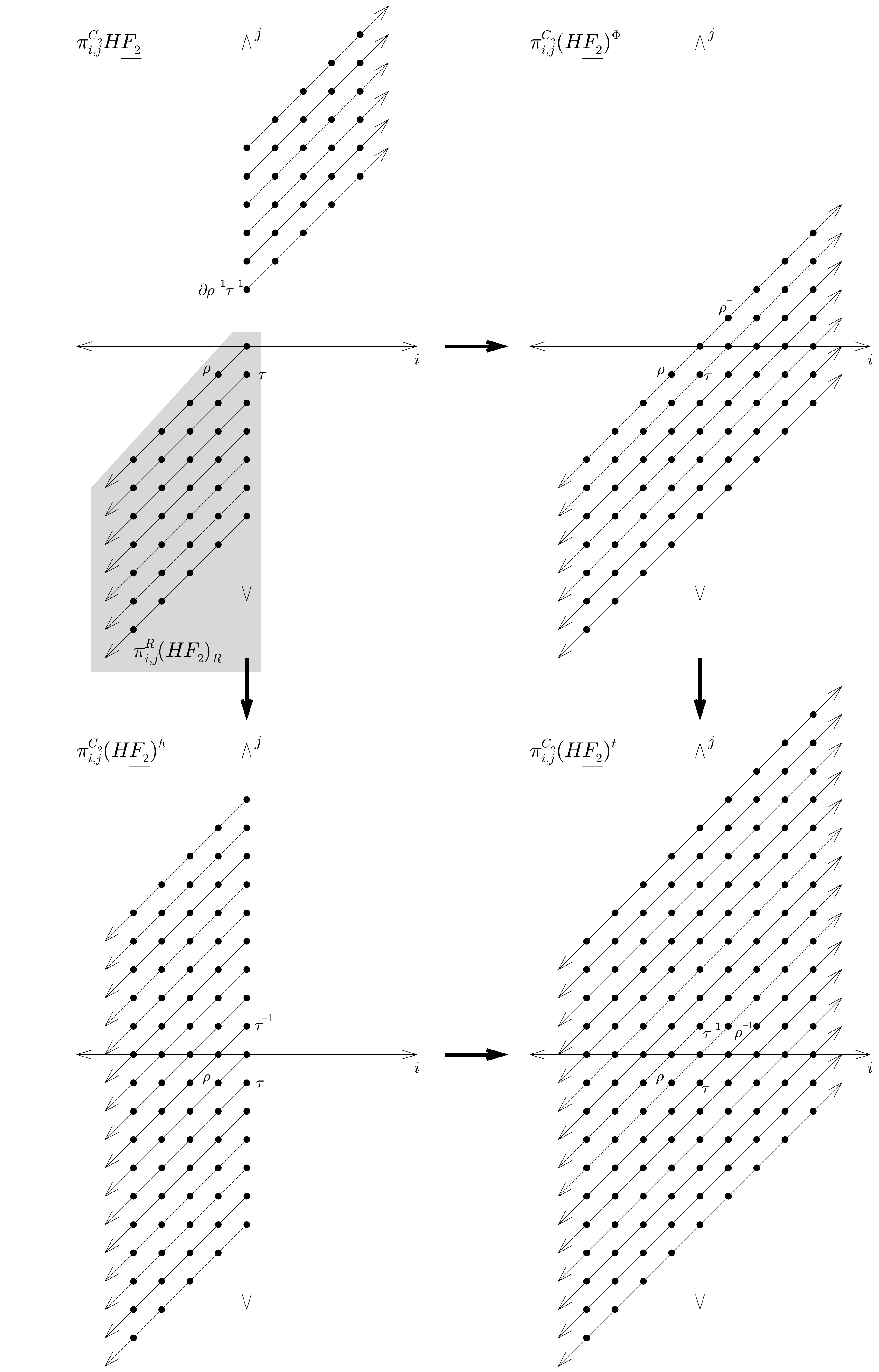}
\caption{Computing $\pi^{C_2}_{*,*}H\ul{\FF_2}$ from $\pi^{\RR}_{*,*}(H\FF_2)_\RR$.}
\label{fig:HF2}
\end{figure}

We now demonstrate the effectiveness of our theory by computing the $C_2$-equivariant homotopy groups of the $C_2$-Betti realizations of some $p$-complete cellular real motivic spectra from their motivic homotopy groups.  

For $p$ odd, the computational implementation of our theory is straightforward.  Given $X \in \SH_{\cell}(\RR)^\wedge_p$, we have (\ref{eq:rhosplitting})
$$ X \simeq X[\rho^{-1}] \vee X^\wedge_\rho $$
and we have
$$ \pi^{C_2}_{*,*}\widehat\Be{}^{C_2}_p(X) \simeq \pi^\RR_{*,*}X[\rho^{-1}] \oplus \pi^\RR_{*,*}X^{\wedge}_\rho[\tau^{-2}]. $$

In the case of $p = 2$, the computations are more interesting, and we illustrate this with some examples.
In each of these cases, the motivic homotopy groups are less complicated than the corresponding $C_2$-equivariant homotopy groups.\footnote{It is worth pointing out that in each of these examples the actual determination of these motivic homotopy groups is often the result of deep results in motivic homotopy theory, wheras the corresponding equivariant computations do not depend on similarly deep input.}

We point out that the use of isotropy separation to organize the equivariant homotopy of the examples in this section is not new --- see, for example, \cite{Greenlees}.

\subsection*{Mod $2$ motivic cohomology}

Let $(H\FF_2)_\RR \in \SH(\RR)$ denote the mod $2$ real motivic Eilenberg MacLane spectrum.  Dugger-Isaksen proved that the motivic complex cobordism spectrum $\mit{MGL}$ is cellular \cite{DuggerIsaksencell}.  Work of Hopkins-Morel and Hoyois \cite{Hoyois} implies that $(H\ZZ)_\RR$ (and hence $(H\FF_2)_\RR$) is a regular quotient of $\mit{MGL}$, and is therefore cellular.  Finally, Heller-Ormsby \cite[Thm.~4.17]{HellerOrmsby} prove that for any abelian group, the $C_2$-Betti realization of $(HA)_\RR$ is $H\ul{A}$, the $C_2$-equivariant Eilenberg-MacLane spectrum associated to the constant Mackey functor $\ul{A}$, so we have
$$ \Be^{C_2}(H\FF_2)_\RR \simeq H\ul{\FF_2}. $$
We may therefore apply our theory to compute $\pi_*^{C_2}H\ul{\FF_2}$.  

Recall again that we have \cite{Voevodsky1}
$$ \pi^\RR_{*,*} (H\FF_2)_\RR = \FF_2[\tau, \rho]. $$
Using Thm.~\ref{thm:Bachmann}, we have
\begin{align*}
 \pi^{C_2}_{*,*} H\ul{\FF_2}^\Phi & \cong  \pi^{\RR}_{*,*} (H\FF_2)_\RR[\rho^{-1}] \\
 & = \FF_2[\tau, \rho^\pm].
\end{align*}
Using Thm.~\ref{thm:main}, we have 
\begin{align*}
 \pi^{C_2}_{*,*} H\ul{\FF_2}^h & \cong  \pi^{\RR}_{*,*} (H\FF_2)_\RR[\tau^{-1}] \\
 & = \FF_2[\tau^{\pm}, \rho].
\end{align*}
Because the Tate spectrum is the geometric localization of the homotopy completion, we may apply Thm.~\ref{thm:Bachmann} to the above to deduce
\begin{align*}
 \pi^{C_2}_{*,*} H\ul{\FF_2}^t & \cong  \pi^{\RR}_{*,*} (H\FF_2)_\RR[\tau^{-1}][\rho^{-1}] \\
 & = \FF_2[\tau^{\pm}, \rho^\pm].
\end{align*}
We may then use the Mayer-Vietoris sequence
$$ \cdots \to 
\pi^{C_2}_{*+1,*} H\ul{\FF_2}^t \xto{\partial}  
\pi^{C_2}_{*,*} H\ul{\FF_2} \to
\pi^{C_2}_{*,*} H\ul{\FF_2}^h \oplus
\pi^{C_2}_{*,*} H\ul{\FF_2}^\Phi \to
\cdots
$$
associated to the isotropy separation square (\ref{eq:isotropy}) to deduce:
$$
\pi^{C_2}_{*,*}H\ul{\FF_2} = \FF_2[\tau, \rho] \oplus \frac{\FF_2[\tau, \rho]}{(\tau^\infty, \rho^{\infty})}\{ \partial \rho^{-1} \tau^{-1} \}.
$$

The calculation is displayed in Figure~\ref{fig:HF2}.  The motivic homotopy $\pi^{\RR}_{*,*}(H\FF_2)_\RR$ is displayed in the shaded region.  In this figure, a dot represents a factor of $\FF_2$, and a line represents multiplication by the element $\rho$.  The other three quadrants are then obtained from this motivic homotopy by inverting $\tau$, $\rho$, or both $\tau$ and $\rho$.  The resulting equivariant homotopy, deduced from the Mayer-Vietoris sequence, is displayed in the upper left hand chart (the combination of the shaded and unshaded regions).

\begin{figure}
\centering
\includegraphics[width=\linewidth]{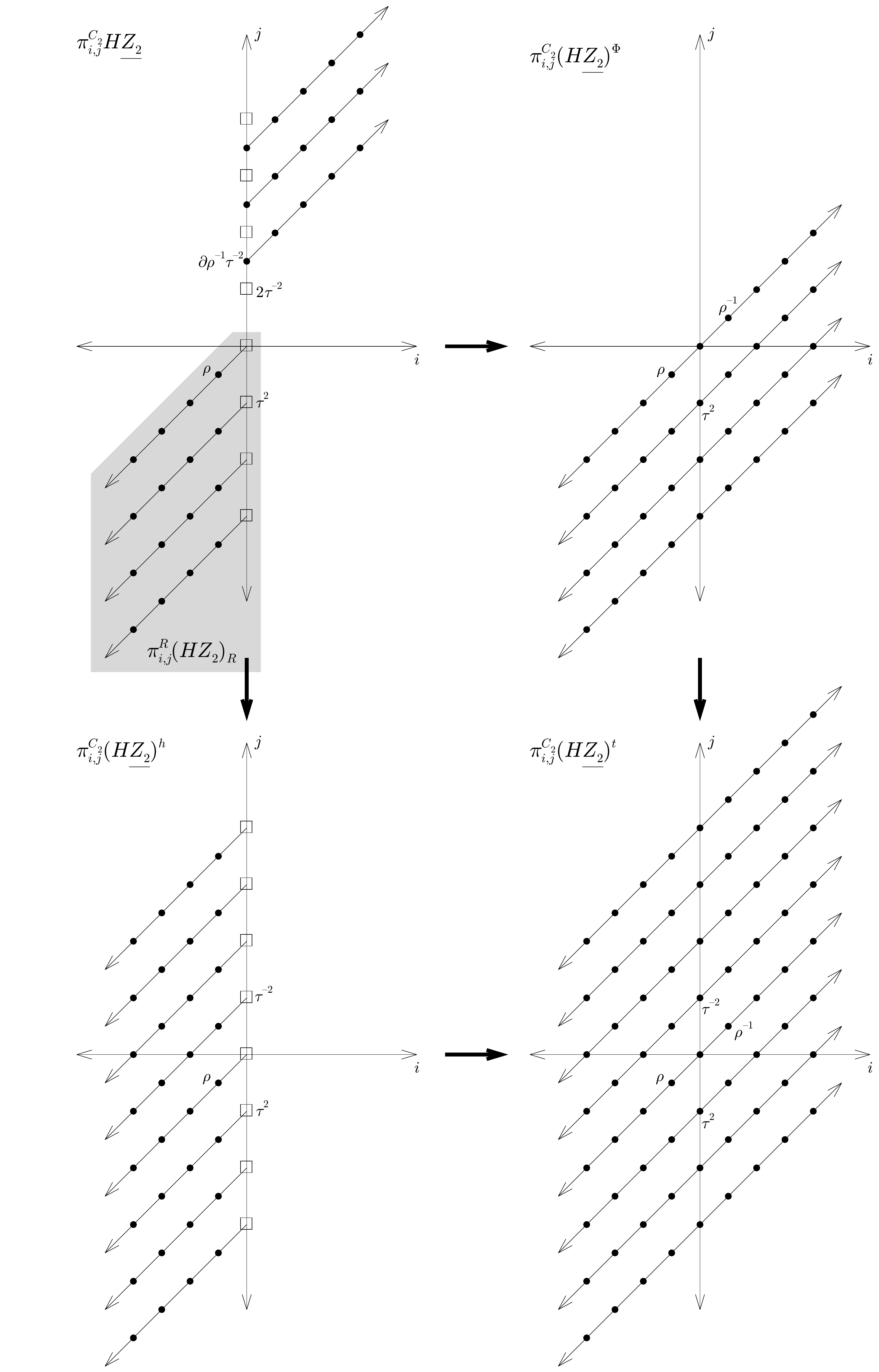}
\caption{Computing $\pi^{C_2}_{*,*}H\ul{\ZZ_2}$ from $\pi^{\RR}_{*,*}(H\ZZ_2)_\RR$.}
\label{fig:HZ}
\end{figure}

\subsection*{$2$-adic motivic cohomology}

The discussion of the previous subsection also establishes that the $2$-adic real motivic Eilenberg-MacLane spectrum $(H\ZZ_2)_\RR$ is cellular (and it is clearly $2$-complete).  The coefficients of $(H\ZZ_2)_\RR$ are given by (see, for example \cite{Hill}\footnote{Hill computes the homotopy of $BPGL\bra{0}^{\wedge}_2$, which, by the work of Hopkins-Morel and Hoyois \cite{Hoyois} is equivalent to $H\ZZ_2$.}):
$$\pi^\RR_{*,*}(H\ZZ_2)_\RR = \frac{\ZZ_2[\rho, \tau^2]}{(2\rho)}. $$
Again, \cite[Thm.~4.17]{HellerOrmsby} implies that
$$ \Be^{C_2}(H\ZZ_2)_\RR \simeq H\ul{\ZZ_2}. $$
We therefore deduce:
\begin{align*}
 \pi^{C_2}_{*,*} H\ul{\ZZ_2}^\Phi & \cong  \pi^{\RR}_{*,*} (H\ZZ_2)_\RR[\rho^{-1}] \\
 & = \FF_2[\tau^2, \rho^\pm],
\end{align*}
\begin{align*}
 \pi^{C_2}_{*,*} H\ul{\ZZ_2}^h & \cong  \pi^{\RR}_{*,*} (H\ZZ_2)_\RR[\tau^{-2}] \\
 & = \frac{\ZZ_2[\tau^{\pm 2}, \rho]}{(2\rho)},
\end{align*}
and
\begin{align*}
 \pi^{C_2}_{*,*} H\ul{\ZZ_2}^t & \cong  \pi^{\RR}_{*,*} (H\ZZ_2)_\RR[\tau^{-2}][\rho^{-1}] \\
 & = \FF_2[\tau^{\pm 2}, \rho^\pm].
\end{align*}
We therefore deduce from the Mayer-Vietoris sequence
$$ 
\pi^{C_2}_{*,*} H\ul{\ZZ_2} \cong \frac{\ZZ_2[\tau^{2}, \rho, 2\tau^{-2k}]}{(2\rho)} \oplus \frac{\FF_2[\tau^2, \rho]}{(\tau^\infty, \rho^\infty)}\{\partial \rho^{-1}\tau^{-2}\}.
$$
Note that there are implicitly defined relations in the above presentation, such as $\tau^2 (2\tau^{-2k}) = 2 \tau^{-2k+2}$ and $\rho (2\tau^{-2k}) = 0$.

The calculation is displayed in Figure~\ref{fig:HZ}.  Everything is analogous to the notation of Figure~\ref{fig:HF2}, except that there are now boxes in addition to solid dots, which represent factors of $\ZZ_2$.

\subsection*{The effective cover of $2$-adic algebraic K-theory}

\begin{figure}
\centering
\includegraphics[width=\linewidth]{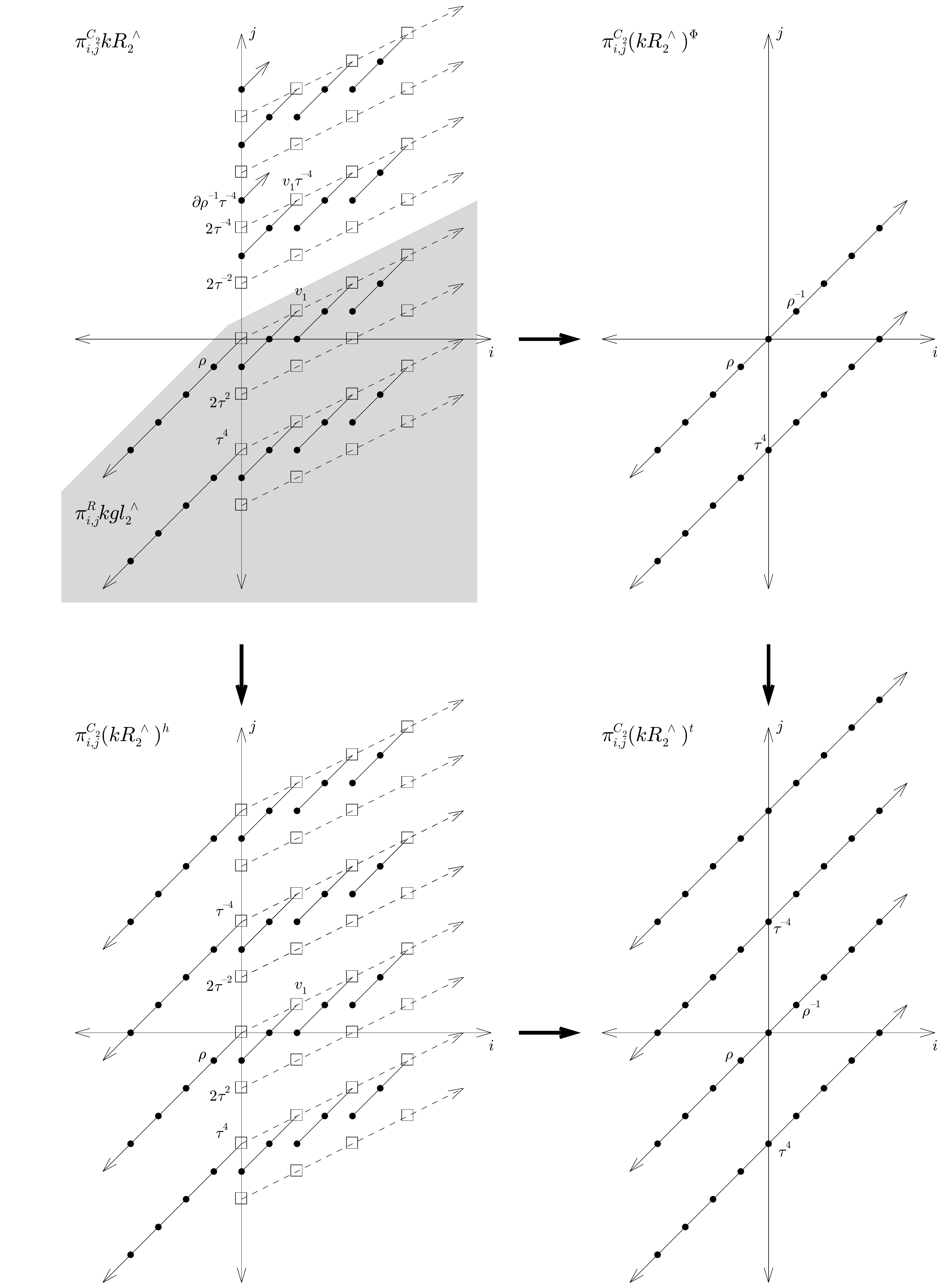}
\caption{Computing $\pi^{C_2}_{*,*}kR^{\wedge}_2$ from $\pi^{\RR}_{*,*}(kgl^{\wedge}_2)_\RR$.}
\label{fig:kgl}
\end{figure}

We now turn our attention to the spectrum $kgl$, the effective cover of $KGL$, the algebraic K-theory spectrum for the reals.  In \cite{Hill}, Hill computes the $2$-adic homotopy groups of this spectrum through the identification
$$ kgl^{\wedge}_2 \simeq BPGL\bra{1}^\wedge_2. $$
In particular, $kgl^{\wedge}_2$ is cellular.  We have
$$ \pi^\RR_{*,*}kgl^{\wedge}_2 \cong \frac{\ZZ_2[\rho, 2\tau^2, \tau^4, v_1]}{(2\rho, v_1\rho^3)} $$
with 
$$ \abs{v_1} = (2,1). $$
Note that, just as in the previous subsection, our presentation has implicitly defined relations, such as $(2\tau^2)^2 = 4\tau^4$.

It is clear from the definition of $KGL$ that we have
$$ \Be^{C_2} KGL = KR $$
where $KR$ is Atiyah's Real K-theory spectrum, and from \cite[Cor.~5.9]{Heard} we deduce the connective analog
$$ \Be^{C_2} kgl \simeq kR. $$
We deduce:
\begin{align*}
 \pi^{C_2}_{*,*} (kR^{\wedge}_2)^\Phi & \cong  \pi^{\RR}_{*,*} kgl^{\wedge}_2[\rho^{-1}] \\
 & = \FF_2[\tau^4, \rho^\pm],
\end{align*}
\begin{align*}
 \pi^{C_2}_{*,*} (kR^{\wedge}_2)^h & \cong  \pi^{\RR}_{*,*} kgl^{\wedge}_2[\tau^{-4}] \\
 & = \frac{\ZZ_2[\rho, 2\tau^2, \tau^{\pm 4}, v_1]}{(2\rho, v_1\rho^3)},
\end{align*}
and
\begin{align*}
 \pi^{C_2}_{*,*} (kR^{\wedge}_2)^t & \cong  \pi^{\RR}_{*,*} (kgl^{\wedge}_2)[\tau^{-4}][\rho^{-1}] \\
 & = \FF_2[\tau^{\pm 4}, \rho^\pm].
\end{align*}
We therefore deduce
$$ \pi^{C_2}_{*,*}kR^{\wedge}_2 \cong \frac{\ZZ_2[\rho, 2\tau^2, \tau^4, v_1, 2\tau^{-2k}, v_1\tau^{-4k}]}{(2\rho, v_1\rho^3)} \oplus \frac{\FF_2[\tau^4, \rho]}{(\tau^\infty, \rho^\infty)}\{\partial \rho^{-1}\tau^{-4}\}. $$

The calculation is displayed in Figure~\ref{fig:kgl}.  In this figure, dotted lines represent $v_1$-multiplication.

\bibliographystyle{amsalpha}

\nocite{*}
\bibliography{RC2}

\end{document}